\newcommand{\mlabel}[1]{{\color{white} \marginpar{#1}}}
\newtheorem{defn}{Definition}[section]
\newtheorem{thm}[defn]{Theorem}
\newtheorem{prop}[defn]{Proposition}
\newtheorem{lemma}[defn]{Lemma}
{\theorembodyfont{\rmfamily}\newtheorem{eg}[defn]{Example}}
\newtheorem{claim}[defn]{Claim}
\newtheorem{cor}[defn]{Corollary}
\newtheorem{conj}[defn]{Conjecture}
\newenvironment{proof}{\paragraph{Proof.}}{\hfill$\Box$\null}
\newcommand{\lm}{\ensuremath{\longrightarrow}}
\newcommand{\Rhom}{\mathbf{R}\strut\kern-.2em\operatorname{Hom}\nolimits}
\newcommand{\vs}{\vspace{2mm}}
\newcommand{\h}{h}
\renewcommand{\L}{\Lambda}
\newcommand{\xlm}{\ensuremath{\xrightarrow}}
\newcommand{\eps}{\varepsilon}
\newcommand{\Hom}{\operatorname{Hom}\nolimits}
\DeclareMathOperator{\shom}{\ensuremath{\mathcal{H}\mathit{om}}}
\DeclareMathOperator{\send}{\ensuremath{\mathcal{E}\!\mathit{nd}}}
\newcommand{\Ext}{\operatorname{Ext}\nolimits}
\newcommand{\End}{\operatorname{End}\nolimits}
\DeclareMathOperator{\proj}{\mbox{Proj}\,}
\DeclareMathOperator{\mo}{\mathsf{mod}}
\DeclareMathOperator{\Pic}{\mbox{Pic}\,}
\DeclareMathOperator{\Div}{\mbox{Div}\,}
\DeclareMathOperator{\rk}{\mbox{rank}\,}
\DeclareMathOperator{\coker}{\mbox{coker}\,}
\DeclareMathOperator{\tr}{\mbox{tr}\,}
\DeclareMathOperator{\Coh}{\text{\sf Coh}}
\DeclareMathOperator{\mmm}{\mathfrak{m}}
\DeclareMathOperator{\s}{\sigma}
\DeclareMathOperator{\PP}{\mathbb{P}}
\DeclareMathOperator{\Z}{\mathbb{Z}}
\DeclareMathOperator{\Q}{\mathbb{Q}}
\DeclareMathOperator{\X}{\mathbb{X}}
\DeclareMathOperator{\calm}{\mathcal{M}}
\DeclareMathOperator{\calo}{\mathcal{O}}
\DeclareMathOperator{\caln}{\mathcal{N}}
\DeclareMathOperator{\call}{\mathcal{L}}
\DeclareMathOperator{\calp}{\mathcal{P}}
\DeclareMathOperator{\calt}{\mathcal{T}}
\DeclareMathOperator{\Xt}{\tilde{X}}
\DeclareMathOperator{\Dt}{\tilde{D}}
\DeclareMathOperator{\oa}{\mathcal{O}_{A}}
\DeclareMathOperator{\ox}{\mathcal{O}_{X}}
\DeclareMathOperator{\oxt}{\mathcal{O}_{\Xt}}
\DeclareMathOperator{\oc}{\mathcal{O}_{C}}
\DeclareMathOperator{\bfx}{\mathbf{x}}
\DeclareMathOperator{\bfy}{\mathbf{y}}
\begin{document}

\begin{center}
\LARGE \textbf{2-hereditary algebras and almost Fano weighted surfaces}
\end{center}

\begin{center}
  DANIEL CHAN
\footnote{This project was supported by the Australian Research Council, Discovery Project Grant DP130100513.}
\end{center}

\begin{center}
  {\em University of New South Wales}
\end{center}

\begin{center}
e-mail address:{\em danielc@unsw.edu.au}
\end{center}

\begin{abstract}
Tilting bundles $\calt$ on a weighted projective line $\mathbb{X}$ have been intensively studied by representation theorists since they give rise to a derived equivalence between $\mathbb{X}$ and the finite dimensional algebra $\End \calt$. A classical result states that if $\End \calt$ is hereditary, then $\mathbb{X}$ is Fano and conversely, for every Fano weighted projective line, there exists a tilting bundle $\calt$ with $\End \calt$ hereditary. In this paper, we examine the question of when a weighted projective surface has a tilting bundle whose endomorphism ring is 2-hereditary in the sense of Herschend-Iyama-Oppermann. It is natural to conjecture that they are the almost Fano weighted surfaces, weighted only on rational curves, and we give evidence to support this. 
\end{abstract}

Throughout, we work over an algebraically closed base field $k$ of characteristic zero.

\section{Introduction}  \label{sintro}  \mlabel{sintro}

The representation theory of finite dimensional algebras is surprisingly subtle, as the complexity of module categories can vary vastly and chaotically with respect to changes in generators and relations. Tilting theory provides a way of explaining why some algebras have a nice representation theory: if an algebra is the endomorphism ring of a tilting bundle $\calt$ on a nice (weighted) projective variety $\X$, then there is a derived equivalence $D^b_{fg}(\End \calt) \simeq D^b_c(\X)$ which allows us to ``import'' the good representation theory of $\X$ to $\End \calt$. The classic examples are Ringel's canonical algebras $\Lambda$ \cite{R}, which all arise as endomorphism algebras of tilting bundles on a weighted projective line $\X$ as defined by Geigle and Lenzing \cite{GL} and this gives a nice geometric explanation of the nice representation theory of $\Lambda$. Furthermore, $\Lambda$ is derived equivalent to an hereditary algebra if and only if $\X$ is Fano. The purpose of this paper is to examine this picture in the case of weighted projective surfaces. 

More precisely, the notion of being hereditary has been generalised by Herschend-Iyama-Opperman to the notion of being $n$-hereditary (see \cite{HIO}) which is a little stronger than having global dimension $n$. Weighted projective spaces and varieties have also been studied under a variety of different contexts (\cite{HIMO}, \cite{IL}, \cite{LO}). They can be viewed as log varieties of the form $(X,\Delta = \sum_i (1 -\frac{1}{p_i})D_i)$ (with some smoothness assumptions) or certain associated orders dubbed Geigle-Lenzing orders by Iyama-Lerner \cite{IL}. The log variety viewpoint allows us to consider geometric concepts such as (almost) Fano (see Section~\ref{slog}) and the log minimal model program, whilst the order approach immediately gives us a category of coherent sheaves so we can talk about tilting bundles for weighted projective varieties as well as Serre duality. We consider the question: ``Which weighted projective surfaces $\mathbb{X}$ have tilting bundles $\calt$ such that $\End \calt$ is 2-hereditary?''. We call such a tilting bundle {\em $2$-hereditary}. The related question of which projective surfaces have tilting bundles, has also been studied for quite some time. Hille and Perling \cite{HP} show that all rational surfaces have tilting bundles, but it is still an open conjecture if these are the only ones. Our study suggests the following

\begin{conj}   \label{cmain}  \mlabel{cmain} 
A weighted projective surface $\X$ has a 2-hereditary tilting bundle if and only if the weighted divisors $D_i$ are rational and $\X$ is almost Fano. 
\end{conj}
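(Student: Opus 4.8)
We outline a strategy towards Conjecture~\ref{cmain}.

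\paragraph{Reformulation.} The first step is to turn the $2$-hereditary condition into geometry. If $\calt$ is a tilting bundle on $\X$ with $\Lambda=\End\calt$, then the derived equivalence $D^b_c(\X)\simeq D^b_{fg}(\Lambda)$ intertwines the Serre functor $-\otimes\omega_\X[2]$ of $\X$ --- where $\omega_\X$ is the canonical module of the Geigle--Lenzing order, which corresponds to the log canonical class $K_X+\Delta$ --- with the Serre functor $\nu$ of $\Lambda$; hence $\nu_2=\nu[-2]$ corresponds to $-\otimes\omega_\X$, and $H^i(\nu_2^j\Lambda)\cong\Ext^i_\X(\calt,\calt\otimes\omega_\X^{j})$. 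Since $\X$ is a smooth weighted surface, $\Ext^{\geq 3}_\X$ vanishes, and a tilting bundle on a surface necessarily has endomorphism algebra of global dimension exactly $2$; so by the characterisation of $n$-hereditary algebras in \cite{HIO}, $\Lambda$ is $2$-hereditary precisely when
\begin{equation}\label{eqn-twist}
\Ext^1_\X(\calt,\calt\otimes\omega_\X^{j})=0\qquad\text{for all }j\in\Z.
\end{equation}
Thus Conjecture~\ref{cmain} asks exactly which weighted surfaces carry a tilting bundle satisfying \eqref{eqn-twist}; here the $2$-representation infinite and $2$-representation finite cases should correspond, respectively, to $-(K_X+\Delta)$ being nef and big and to $-(K_X+\Delta)$ being numerically trivial, both covered by ``almost Fano'' in the sense of Section~\ref{slog}. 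I would prove the two implications separately.

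\paragraph{Sufficiency.} Suppose $\X$ is almost Fano with rational $D_i$. The plan is to descend to a log del Pezzo model and build the bundle there. When $-(K_X+\Delta)$ is nef and big the log anticanonical ring is finitely generated and contracts the $\omega_\X$-trivial curves, producing a crepant birational morphism $\pi\colon\X\to\X_0$ onto a weighted log del Pezzo surface $\X_0$ with mild singularities and $-\omega_{\X_0}$ ample; crepancy gives $\pi^*\omega_{\X_0}=\omega_\X$, so condition \eqref{eqn-twist} is unaffected by $\pi$. On $\X_0$ one constructs a ``Beilinson-type'' tilting bundle $\calt_0$: in the Geigle--Lenzing cases (weighted $\PP^2$ and weighted $\PP^1\times\PP^1$) this is the bundle whose endomorphism algebra is shown $2$-representation infinite, precisely in the Fano case, in \cite{HIMO} and \cite{IL}, and for a general log del Pezzo one takes a suitable full strong exceptional collection of line bundles (their existence being the rational-surface phenomenon underlying \cite{HP}); condition \eqref{eqn-twist} for $\calt_0$ then follows from Kawamata--Viehweg vanishing for the klt pair $(X_0,\Delta_0)$, since $-\omega_{\X_0}$ ample forces $H^1(\X_0,\omega_{\X_0}^{k})=0$ for all $k$ and likewise for the relevant fractional twists. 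Finally one pulls $\calt_0$ back and enlarges $\pi^*\calt_0$ by bundles supported on the exceptional $(-2)$-curves, via the semiorthogonal decomposition of $D^b_c(\X)$, choosing these summands so as not to destroy \eqref{eqn-twist} --- which the $\omega_\X$-triviality of the contracted curves should permit. The hypothesis that the $D_i$ are rational is used in arranging the boundary pieces of $\calt_0$ to be tilting, just as a component of positive genus obstructs a tilting object in the weighted projective line case.

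\paragraph{Necessity.} Conversely, suppose $\calt$ is a tilting bundle satisfying \eqref{eqn-twist}. The existence of a tilting bundle forces the Hodge structure of the associated orbifold surface to be of Tate type, hence $X$ to be rational and each stacky $D_i$, by adjunction, to be isomorphic to $\PP^1$; this should follow the template of \cite{IL}. For almost Fanoness, Riemann--Roch on the weighted surface makes $\chi(\send\calt\otimes\omega_\X^{-m})$ a quadratic polynomial in $m$ with leading coefficient a positive multiple of $(K_X+\Delta)^2$, while \eqref{eqn-twist} and Serre duality give $\chi(\send\calt\otimes\omega_\X^{-m})=\dim\Hom(\calt,\calt\otimes\omega_\X^{-m})+\dim\Hom(\calt,\calt\otimes\omega_\X^{m+1})\geq 0$; hence $(K_X+\Delta)^2\geq 0$. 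To obtain nefness one argues by contradiction: if an irreducible curve $C$ has $(K_X+\Delta)\cdot C>0$, applying $\Hom(\calt,-)$ to $0\to\send\calt\otimes\omega_\X^{-m}(-C)\to\send\calt\otimes\omega_\X^{-m}\to(\send\calt\otimes\omega_\X^{-m})|_C\to 0$ and invoking \eqref{eqn-twist} yields an injection $\Ext^1_C(\calt|_C,(\calt\otimes\omega_\X^{-m})|_C)\hookrightarrow\Ext^2_\X(\calt,\calt\otimes\omega_\X^{-m}(-C))\cong\Hom(\calt,\calt\otimes\omega_\X^{m+1}(C))^{\vee}$, whose left-hand side has dimension growing linearly in $m$ because $(\calt\otimes\omega_\X^{-m})|_C$ becomes increasingly negative along $C$; this contradicts \eqref{eqn-twist} once one knows the right-hand side vanishes for $m\gg 0$, which holds, by boundedness of slopes of subsheaves of a fixed bundle, whenever $-(K_X+\Delta)$ is pseudo-effective (use its Zariski decomposition) or of negative self-intersection. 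Pinning down the remaining cases, and separating bigness from numerical triviality, I would do via the Minamoto--Mori correspondence between $2$-representation infinite algebras and quasi-Fano noncommutative surfaces, applied to the preprojective algebra $\Pi=\bigoplus_{m\geq 0}\Hom_\X(\calt,\calt\otimes\omega_\X^{-m})$, which is then a noetherian quasi-Fano algebra with $\operatorname{qgr}\Pi\simeq\Coh\X$, forcing the log anticanonical ring to define a birational model of $\X$.

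\paragraph{The main obstacle.} The hard part is the two ``assembly'' steps, which have no analogue in the weighted projective line case. On the sufficiency side, propagating a $2$-hereditary tilting bundle through the blow-ups and crepant contractions relating a general almost-Fano $\X$ to its log del Pezzo model, while preserving the delicate twisted vanishing \eqref{eqn-twist}, genuinely strengthens the Hille--Perling construction \cite{HP}: even for ordinary del Pezzo surfaces of small degree a judiciously chosen exceptional collection is needed, since naive collections of line bundles need not satisfy \eqref{eqn-twist}, and extending across a blow-up may force a non-obvious new choice of $\calt$. On the necessity side, upgrading the numerical output to the precise ``almost Fano'' condition --- in particular excluding log-Calabi--Yau fibrations such as relatively minimal rational elliptic surfaces --- requires the quasi-Fano/noncommutative-surface dictionary to interact correctly with the log minimal model program, including with flips, for which there is no one-dimensional precedent. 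This interaction is presumably why the statement remains conjectural; one would expect to establish the reformulation \eqref{eqn-twist}, the necessity of rationality, the Riemann--Roch bound, and sufficiency in the explicitly constructible (Geigle--Lenzing and del Pezzo) cases, leaving the general blow-up step and the sharp positivity as the open core.
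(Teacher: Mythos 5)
The statement you are proving is Conjecture~\ref{cmain}: the paper offers no proof of it, only partial evidence (Corollary~\ref{cneccriterion}, Propositions~\ref{pnec} and \ref{pcentres}, Theorems~\ref{tdemazure}, \ref{tdpbundles} and \ref{texamples}), and your proposal, as you yourself concede at the end, is a research plan rather than a proof. Beyond that honest caveat, two of your concrete steps are flawed. First, your reformulation of the $2$-hereditary condition is too weak: it is not true that a tilting bundle on a surface automatically has endomorphism algebra of global dimension exactly $2$ (there are tilting bundles on rational surfaces with global dimension $3$), and this is precisely why Proposition~\ref{pnheredbundle} requires, in addition to the $\Ext^1$-vanishing for all twists, the condition $\Ext^2_A(\calt,\omega_A^{-1}\otimes_A\calt)=0$ (equivalently $\Hom_A(\calt,\omega_A^{2}\otimes_A\calt)=0$), which forces global dimension $n$ and is not a consequence of your displayed condition. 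Second, in the nefness argument your appeal to ``boundedness of slopes \ldots whenever $-(K_X+\Delta)$ is pseudo-effective'' is circular at that stage: no positivity of $-(K_X+\Delta)$ is yet available. The paper closes exactly this gap differently, using that $(X,\Delta)$ is klt together with the easy dichotomy theorem of the log MMP, which supplies a covering family of curves $H$ with $(K_X+\Delta).H<0$ and hence kills $H^0(\ox(K_X+r(K_X+\Delta)+C))$ for $r\gg 0$ (Proposition~\ref{pnec}); your Riemann--Roch argument for $(K_X+\Delta)^2\geq 0$ does parallel the paper's, though the paper runs it on $\ox$ itself after splitting off $\ox$ from $\send_A\calt$ via the trace map (Corollary~\ref{cneccriterion}).

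Even granting those repairs, the remaining steps are not things the paper (or anyone) knows how to do, and some are stated too optimistically. On the necessity side, the paper's own output is strictly weaker than ``almost Fano'': it cannot exclude blowups of $\PP^2$ at $9$ points in almost general position (Theorem~\ref{tdemazure}), and your proposed exclusion of such log Calabi--Yau-type cases via a Minamoto--Mori-style dictionary is only named, not argued; likewise rationality of the weighted divisors $D_i$ is nowhere proved in the paper (Proposition~\ref{pcentres} only gives rationality of $X$, by a Grothendieck group argument, not by Hodge theory), and your adjunction claim for the $D_i$ is a hope, not an argument. On the sufficiency side, the scheme ``build a tilting bundle on the log del Pezzo model, pull back crepantly, and enlarge by sheaves on the $(-2)$-curves while preserving the twisted vanishing'' has no counterpart in the paper and is exactly where the difficulty lies: the paper's positive results are direct constructions case by case (del Pezzo surfaces of degree $\geq 3$, $\mathbb{F}_2$, a blowup at three collinear points, and a handful of weighted examples), and in the weighted cases ii) and iii) of Theorem~\ref{texamples} no $2$-hereditary tilting bundle that is a direct sum of line bundles exists, so the needed bundles had to be produced by mutation or by group actions --- a phenomenon your ``Beilinson-type collection plus Kawamata--Viehweg vanishing'' outline does not account for. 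So the correct assessment is: your numerical necessity computations recover (modulo the two gaps above) roughly Theorem~\ref{tnec}, but the full conjecture remains open on both sides, and your proposal does not close it.
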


Note that the conditions of being Fano and almost Fano coincide for curves, so this is an analogue of the dimension one result above. In \cite{HIMO}, it is shown that every weighted projective plane which is weighted on $\leq 3$ lines (and so in particular is Fano), has a 2-hereditary tilting bundle. It is also easy to construct a 2-hereditary tilting bundle on the Hirzebruch surface $\mathbb{F}_2$ which is not Fano. 

In support of the conjecture we prove the following  
\begin{thm}  \label{tnec} \mlabel{tnec}
 [Corollary~\ref{cneccriterion}, Theorem~\ref{tdemazure}] Let $(X,\Delta)$ be a weighted projective surface with a 2-hereditary tilting bundle. Then $-(K_X + \Delta)$ is nef and $(K_X + \Delta)^2 \geq 0$. In particular, if $\Delta = 0$, then either $X$ is almost Fano, or it is a blowup of $\PP^2$ at 9 points in almost general position (defined in Definition~\ref{dKXonC}). 
\end{thm}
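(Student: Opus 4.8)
The plan is to convert the $2$-hereditary hypothesis on $\Lambda:=\End(\calt)$ into a vanishing statement for the endomorphism bundle $\calE:=\send(\calt)$ twisted by powers of $L:=\omega_\X^{-1}$, then to read off the two numerical assertions, and finally to run the surface classification in the case $\Delta=0$. First I would dispose of the $2$-representation infinite vs.\ finite dichotomy on the finite side: a $2$-representation finite algebra is (twisted) fractionally Calabi--Yau, so $\nu^a\cong[\,b\,]$ as autoequivalences of $\D^b(\Lambda)$ for some $a\ge1$ and $b$; since the Serre functor of $\D^b(\X)\simeq\D^b(\Lambda)$ is $-\otimes\omega_\X[2]$, evaluating $\nu^a\cong[\,b\,]$ on $\O_\X$ forces $\omega_\X^a\cong\O_\X$, so $\omega_\X$ is torsion, $-(K_X+\Delta)$ is numerically trivial --- hence nef --- and $(K_X+\Delta)^2=0$. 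So I may assume $\calt$ is $2$-representation infinite, which by definition means $\nu_2^{-i}(\Lambda)\in\mo\Lambda$ for all $i\ge0$. As $\nu_2=\nu[-2]$ corresponds under the equivalence to $-\otimes\omega_\X$, the object $\nu_2^{-i}(\Lambda)$ corresponds to $\calt\otimes\omega_\X^{-i}=\calt\otimes L^i$, and membership in $\mo\Lambda$ translates (using that $\calt$ is locally free and $\dim\X=2$) into $\Ext^1_\X(\calt,\calt\otimes L^i)=\Ext^2_\X(\calt,\calt\otimes L^i)=0$, equivalently $H^j(\X,\calE\otimes L^i)=0$ for $j=1,2$ and all $i\ge0$ (combining with Serre duality one gets in addition $\Ext^1_\X(\calt,\calt\otimes\omega_\X^m)=0$ for every $m\in\Z$).

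Next I would argue numerically. By Riemann--Roch on $\X$, the function $i\mapsto\chi(\X,\calE\otimes L^i)$ is a polynomial in $i$ whose coefficient of $i^2$ is $\tfrac12(\rk\calE)(K_X+\Delta)^2$ and whose value at $i=0$ is $\chi(\X,\calE)=\dim_k\Lambda>0$; since the vanishing above gives $\chi(\X,\calE\otimes L^i)=h^0\ge0$ for all $i\ge0$, letting $i\to\infty$ forces $(K_X+\Delta)^2\ge0$. For nefness, suppose not: there is an integral curve $C\subset X$ with $(K_X+\Delta)\cdot C=d>0$, so $\deg(L|_C)=-d$. Applying $\Ext^{\bullet}_\X(\calt,-)$ to $0\to\calE\otimes L^i(-C)\to\calE\otimes L^i\to(\calE\otimes L^i)|_C\to0$ and invoking $\Ext^1_\X(\calt,\calt\otimes L^i)=\Ext^2_\X(\calt,\calt\otimes L^i)=0$ for $i\ge0$ gives $\Ext^1_\X(\calt,(\calt\otimes L^i)|_C)\cong\Ext^2_\X(\calt,(\calt\otimes L^i)(-C))$, which by Serre duality on $\X$ is dual to $\Hom_\X(\calt\otimes L^{i+1}(-C),\calt)$. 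The left side is $H^1$ of the bundle $\send(\calt|_C)\otimes L^i|_C$ on $C$, of dimension growing linearly in $i$ (Riemann--Roch on $C$, since $\deg(L^i|_C)\to-\infty$); the right side vanishes for $i\gg0$, because after choosing an ample $H$ on $X$ with $L\cdot H>0$ one has $\mu_{H,\min}(\calt\otimes L^{i+1}(-C))=\mu_{H,\min}(\calt)+(i+1)(L\cdot H)-(C\cdot H)\to\infty$ while $\mu_{H,\max}(\calt)$ is bounded, so there are no nonzero maps by Harder--Narasimhan theory --- a contradiction. (If no ample $H$ has $L\cdot H>0$ then either $L\equiv0$, and we are done, or $K_X+\Delta$ is pseudoeffective and $L\cdot H<0$ for every ample $H$; then $\mu_{H,\max}(\calt\otimes L^i)\to-\infty$ gives $h^0(\X,\calE\otimes L^i)=\dim_k\Hom_\X(\calt,\calt\otimes L^i)=0$ for $i\gg0$, whence $\chi(\X,\calE\otimes L^i)=0$ for $i\gg0$, contradicting that this polynomial is nonzero at $i=0$.) Hence $-(K_X+\Delta)$ is nef.

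Finally let $\Delta=0$. The surface underlying a weighted projective surface with a tilting bundle is rational, so $X$ is a smooth rational surface with $-K_X$ nef and $K_X^2\ge0$. If $K_X^2>0$ then $-K_X$ is nef and big, i.e.\ $X$ is almost Fano. If $K_X^2=0$, I would run the minimal model program: contracting $(-1)$-curves presents $X$ as an iterated blow-up of $\PP^2$ or of a Hirzebruch surface $\mathbb{F}_n$; since $K_{\PP^2}^2=9$, $K_{\mathbb{F}_n}^2=8$, each blow-up drops $K^2$ by one, and a blow-up of a Hirzebruch surface at one or more points is itself a blow-up of $\PP^2$, it follows that $X$ is $\PP^2$ blown up at $9$ points, and the nefness of $-K_X$ is exactly the requirement that these points be in almost general position (Definition~\ref{dKXonC}). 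The step I expect to cause the most trouble is the nefness of $-(K_X+\Delta)$: one must handle the dichotomy around pseudoeffectivity of $K_X+\Delta$, and --- more seriously --- make Serre duality, Riemann--Roch and Harder--Narasimhan theory precise for $\cala$-modules on the Geigle--Lenzing order $\X$, with a curve $C$ that may meet the weighted locus $\bigcup_iD_i$.
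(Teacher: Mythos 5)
Your treatment of the two numerical statements is a genuinely different, and essentially viable, route: where the paper splits $\ox$ off $\send_A\calt$ by the trace map (characteristic zero) and then works only with the line bundles $\ox(-r(K_X+\Delta))$, using Riemann--Roch and the log-MMP ``easy dichotomy'' to kill the relevant $H^2$ (Corollary~\ref{cneccriterion} plus Proposition~\ref{pnec}), you keep the whole endomorphism bundle and replace the dichotomy by Harder--Narasimhan slope estimates, together with an Euler-characteristic count in the pseudoeffective case; restricting to appropriately divisible twists, so that everything is an honest vector bundle on $X$ twisted by an integral multiple of $K_X+\Delta$, removes the Riemann--Roch/Serre duality/HN worries you flag at the end. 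The genuine gaps are in the $\Delta=0$ conclusion. Your final claim that, when $K_X^2=0$, ``the nefness of $-K_X$ is exactly the requirement that these points be in almost general position'' is false for Definition~\ref{dKXonC}: that definition demands in addition that $\calo_{C_X}(K_X)$ be non-torsion in $\Pic^0 C_X$, which by Corollary~\ref{calmostgen} is equivalent to $H^1(\ox(-rK_X))=0$ for \emph{all} $r\geq 0$. The nine base points of a pencil of cubics have $-K_X$ nef (even semiample), yet $\calo_{C_X}(K_X)\simeq\calo_{C_X}$, so they are not in almost general position; such surfaces are excluded exactly by the $H^1$-vanishing, which you possess only for $\send_X(\calt)\otimes_X\ox(-rK_X)$ and never transfer to $\ox(-rK_X)$ itself. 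That transfer is the trace-splitting step of Corollary~\ref{cneccriterion}, and nefness alone cannot replace it.

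Two further points. The blanket assertion that the surface underlying a weighted projective surface with a tilting bundle is rational is not available: whether every smooth projective surface with a tilting bundle is rational is open, and Proposition~\ref{pcentres} proves rationality only under the hypothesis of negative Kodaira dimension, which you never establish (it follows from $\Hom_A(\calt,\omega_A^{\otimes i}\otimes_A\calt)=0$ for $i>0$, i.e.\ from Proposition~\ref{pnheredbundle}, combined with the same trace splitting to reach $H^0(\ox(rK_X))=0$). Without it, surfaces ruled over an elliptic curve with $-K_X$ nef and $K_X^2=0$ survive your numerical conclusions; the paper removes them in Theorem~\ref{tdemazure} via $H^1(\ox)\neq 0$. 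Finally, you leave the $2$-representation finite branch standing with the outcome $K_X+\Delta\equiv 0$; for $\Delta=0$ this is incompatible with the classification you then assert, since a surface with $K_X\equiv 0$ is neither almost Fano nor a blowup of $\PP^2$ at nine points, so the branch must be shown to be empty rather than absorbed. This is exactly what the second paragraph of Proposition~\ref{pnheredbundle} does: $\nu_2^{-i}\Lambda\simeq\Rhom_A(\calt,\omega_A^{\otimes -i}\otimes_A\calt)$ can have no cohomology in negative degrees, so the representation-finite alternative never occurs. Your fractional Calabi--Yau argument also silently drops the twist in ``twisted fractionally Calabi--Yau'', so as written it does not even establish that $\omega_A$ is torsion.
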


We have the following partial converse which shows that in the non-weighted case, the conjecture cannot be too far from the truth. 

\begin{thm}  \label{tDP}  \mlabel{tDP}
Let $X$ be a Fano surface of degree $K_X^2 \geq 3$. Then $X$ has a 2-hereditary tilting bundle (explicitly described in Theorem~\ref{tdpbundles}) which is a direct sum of line bundles. 
\end{thm}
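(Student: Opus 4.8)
The plan is to reduce the statement, via the tilting equivalence and the Herschend--Iyama--Oppermann description of $2$-representation infinite algebras, to a cohomological vanishing on $X$, and then to verify that vanishing for explicit collections of line bundles supplied by the classification of del Pezzo surfaces. Since $X$ is Fano, $-K_X$ is ample, so $X$ is $\PP^2$, $\PP^1\times\PP^1$, or the blow-up $\pi\colon X\to\PP^2$ of $s=9-K_X^2\le 6$ points in general position, and $K_X^2\ge 3$ is exactly the condition that $-K_X$ be very ample. In each case I would write down a full strong exceptional collection of line bundles $\call_0,\dots,\call_r$, of length $r+1=\rk K_0(X)=12-K_X^2$: the Beilinson bundles $\calo\oplus\calo(1)\oplus\calo(2)$ on $\PP^2$ and $\calo\oplus\calo(1,0)\oplus\calo(0,1)\oplus\calo(1,1)$ on $\PP^1\times\PP^1$, and on the blow-ups a collection assembled from $\calo$, the exceptional classes $\calo(E_i)$ with $E_i=\pi^{-1}(p_i)$, and small twists of $\pi^*\calo_{\PP^2}(1)$ --- precisely the collection recorded in Theorem~\ref{tdpbundles}. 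Put $\calt=\bigoplus_{i=0}^{r}\call_i$ and $\Lambda=\End\calt$. Fullness (automatic for an exceptional collection of maximal length on a del Pezzo surface, and in any case visible from Orlov's blow-up formula) together with $\Ext^{>0}(\calt,\calt)=0$ makes $\calt$ a tilting bundle, so we get a derived equivalence $\Rhom(\calt,-)\colon D^b_c(X)\iso D^b_{fg}(\Lambda)$.

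Next I would use that, under this equivalence, the Serre functor $-\otimes\omega_X[2]$ of $X$ corresponds to the derived Nakayama functor $\nu$ of $\Lambda$; hence the twisted Nakayama functor $\nu_2:=\nu\circ[-2]$ corresponds to $-\otimes\omega_X$, and $\nu_2^{-j}\Lambda$ corresponds to $\calt\otimes\omega_X^{-j}$. By the definition of $2$-representation infinite algebra, $\Lambda$ is $2$-representation infinite --- hence $2$-hereditary --- provided $\operatorname{gl.dim}\Lambda\le 2$ and
\[
H^q\!\big(X,\send(\calt)\otimes\omega_X^{-j}\big)=\Ext^q_X\!\big(\calt,\calt\otimes\omega_X^{-j}\big)=0\qquad\text{for all }q>0\text{ and all }j\ge 1
\]
(the $j=0$ statement being already contained in tiltingness). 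Moreover $\Ext^i_\Lambda(D\Lambda,\Lambda)\cong H^{i-2}\!\big(X,\send(\calt)\otimes\omega_X^{-1}\big)$, so the $j=1$ instance is equivalent to $\operatorname{gl.dim}\Lambda\le 2$, while $\send(\calt)\otimes\omega_X^{-1}$ has $\calo(-K_X)$ as a direct summand with $H^0(\calo(-K_X))\ne 0$, so $\Ext^2_\Lambda(D\Lambda,\Lambda)\ne 0$ and hence $\operatorname{gl.dim}\Lambda=2$ once that vanishing holds. Writing $\send(\calt)=\bigoplus_{i,i'}\call_i^{-1}\otimes\call_{i'}$, everything therefore reduces to one assertion: $H^{>0}\big(\calo(\call_{i'}-\call_i-jK_X)\big)=0$ for all $i,i'$ and all $j\ge 1$, together with the case $j=0$ for tiltingness.

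The engine is the choice of collection: it should be arranged so that every difference lies in the translated nef cone, $\call_{i'}-\call_i\in K_X+\operatorname{Nef}(X)$. This is checked directly from the explicit classes using the (rational polyhedral) description of $\operatorname{Nef}(X)$ on a blow-up of $\PP^2$; for the Beilinson bundles it is immediate, and for the blow-ups it is built into Theorem~\ref{tdpbundles}. Granting it, for every $j\ge 1$ the class $D:=\call_{i'}-\call_i-jK_X=(\call_{i'}-\call_i-K_X)+(j-1)(-K_X)$ is a sum of a nef class and a nef (for $j\ge 2$, ample) one, hence nef; then $D-K_X=D+(-K_X)$ is nef plus ample and so ample, whence $H^{>0}(\calo(D))=H^{>0}\!\big(\calo(K_X+(D-K_X))\big)=0$ by Kodaira vanishing. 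This disposes of all $j\ge 1$ simultaneously, and in particular yields $\operatorname{gl.dim}\Lambda=2$. The remaining input, the $j=0$ vanishing $H^{>0}(\calo(\call_{i'}-\call_i))=0$, is the same as $\Ext^{>0}(\calt,\calt)=0$, i.e.\ the statement that the collection is \emph{strong} exceptional, and is verified by Serre duality and Riemann--Roch on the surface. Assembling the pieces, $\calt$ is a tilting bundle, a direct sum of line bundles, with $\End\calt$ $2$-representation infinite and hence $2$-hereditary.

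The main obstacle is the construction itself for the del Pezzo surfaces that are not toric --- those of degrees $5$, $4$ and $3$ --- where one must produce a full exceptional collection of line bundles that is at once \emph{strong} and has its (symmetric) set of differences confined to $K_X+\operatorname{Nef}(X)$, whose intersection with $-K_X-\operatorname{Nef}(X)$ is compact precisely because $-K_X$ is positive. Strong exceptional collections of line bundles are plentiful on rational surfaces, but a typical one has differences escaping this region (equivalently $\operatorname{gl.dim}\End\calt>2$, or some Nakayama twist of $\send(\calt)$ acquires higher cohomology), so the collection has to be pinned down with care --- in practice by mutating a known full exceptional collection. Controlling the nef cone in this way is exactly where very ampleness of $-K_X$, i.e.\ the hypothesis $K_X^2\ge 3$, is used; the same construction does not apply when $K_X^2\le 2$, where $-K_X$ is no longer very ample (though the general conjecture still predicts a $2$-hereditary tilting bundle on those surfaces).
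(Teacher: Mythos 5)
Your overall reduction (tiltingness plus $H^{>0}(X,\send_X(\calt)\otimes\omega_X^{-j})=0$ for $j\ge 1$ implies $\End\calt$ is $2$-representation infinite) is the same as the paper's, but the way you propose to verify the vanishing does not work for the surfaces that carry the real content of the theorem, and the step you defer is exactly the theorem. Your key sufficient criterion is that every difference $\call=\call_{i'}-\call_i$ of summands satisfy $c_1(\call)-K_X\in\operatorname{Nef}(X)$, so that Kodaira vanishing applies to $c_1(\call)-jK_X$ for all $j\ge1$. This is \emph{not} ``built into Theorem~\ref{tdpbundles}'': for the collection there with $r=6$ (the cubic surface), the difference $\ox(E_{123})\otimes\ox(H+E_{123})^{-1}=\ox(-H)$ has $c_1(\call)-K_X=2H-E_1-\cdots-E_6$, of self-intersection $-2$, hence not nef (and not even effective, since no conic passes through six general points); similarly $c_1(\call)=E_{123}-2H$ gives $H-E_{456}$ of self-intersection $-2$. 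Already at $r=5$ one gets $2H-E_{12345}$ of self-intersection $-1$. Moreover the borderline vanishing actually needed on the cubic surface, $H^1(\ox(2H-E_1-\cdots-E_6))=0$ (i.e.\ $j=1$ for $\call=\ox(-H)$), is not a consequence of any Kodaira-type statement: here $\chi=0$ and the vanishing holds precisely because no conic passes through the six points, i.e.\ because $\call|_C\not\simeq\calo_C(K_X)$ for an anticanonical elliptic curve $C$. This is why the paper's criterion (Proposition~\ref{pheredDP}) only asks for the weak numerical bound $c_1(\call).K_X\le K_X^2$ together with the non-numerical condition $\call|_C\not\simeq\calo_C(K_X)$ in the equality case, proved by induction on $j$ via restriction to $C$ and Serre duality on $C$, not by Kodaira vanishing; the equality cases are then ruled out in Theorem~\ref{tdpbundles} using general position (no conic through the six points, $q_4,q_5,q_6$ not collinear).

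Beyond the failure of the criterion on the stated collections, you never produce, for degrees $5$, $4$, $3$, any full strong exceptional collection of line bundles whose differences lie in $K_X+\operatorname{Nef}(X)$; you acknowledge this as ``the main obstacle'' and suggest it can be handled ``by mutating a known full exceptional collection,'' but that construction-and-verification is precisely what the theorem asserts and what the paper supplies (together with the fullness and strong-exceptionality checks, done there via Lemma~\ref{lcohomBlP} and a Hille--Perling style generation argument, which you also leave as ``Serre duality and Riemann--Roch''). Since any exceptional pair forces $D^2+D.K_X=-2$ for the difference $D$, the two-sided nef constraint you impose is very restrictive in low degree, and it is doubtful it can be met at all on the cubic surface; in any case, as written the argument establishes the result only for $\PP^2$ and $\PP^1\times\PP^1$, where your nef condition does hold. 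Finally, the hypothesis $K_X^2\ge3$ is not used in the paper through very ampleness of $-K_X$, but through the bound $r\le 6$ that makes the general-position and conic arguments above available.
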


Perhaps, at the end of the day, it is the examples of $2$-hereditary algebras which are the most important, and the main purpose of the conjecture and results towards it, are to guide this search. One nice feature of constructing $n$-hereditary algebras via tilting bundles is that they are automatically $n$-representation infinite and, in the Fano case, immediately tame by Proposition~\ref{ptame}.

There are several standard ways to look for a tilting bundle $\calt$, and we use them to find $n$-hereditary ones. The easiest is to look for direct sums of line bundles. Checking the generation condition is often easy as Lerner-Oppermann \cite{LO} give a set of generators for the derived category of a weighted projective variety, a result we reprove in Section~\ref{sgen}. To check the other conditions then amounts to a cohomology calculation which is easy for line bundles (see Proposition~\ref{plinebundles}). In the dimension one case, there are Fano weighted projective lines which have no 1-hereditary tilting bundle of this form. These tilting bundles can be obtained by either mutating other tilting bundles or using group actions. We also investigate these methods to produce examples of 2-hereditary tilting bundles and their associated 2-hereditary algebras. Our results in this direction are more scattered, and we give by way of illustration, the following.

\begin{thm}  \label{texamples}  \mlabel{texamples}
 The following Fano weighted projective surfaces have 2-hereditary tilting bundles.
\begin{enumerate}
 \item (Proposition~\ref{p11ram}) $\PP^1 \times \PP^1$ weighted on a smooth $(1,1)$-divisor.
 \item (Corollary~\ref{cconicram}) $\PP^2$ weighted on a conic with weight 2. 
 \item (Theorem~\ref{t2222}) $\PP^2$ weighted on 4 lines in general position with all weights equal to 2.
\end{enumerate}
\end{thm}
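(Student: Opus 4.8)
The plan is to treat the three cases separately, since each weighted projective surface in the list is of a rather different nature, but in all three cases the strategy is the same: produce an explicit candidate tilting bundle $\calt$, typically built from line bundles on the underlying smooth surface together with the ``fractional'' sheaves coming from the weighting (i.e.\ the modules over the associated Geigle--Lenzing order supported on the $D_i$), verify the three defining properties of a tilting bundle, and then check that $\End\calt$ is $2$-hereditary.

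First I would recall (or set up) the working criteria. For generation, I would invoke the explicit set of generators for $D^b_c(\X)$ due to Lerner--Oppermann, reproved in Section~\ref{sgen}; this reduces the generation condition to checking that the chosen summands of $\calt$ generate the same thick subcategory, which for line-bundle-type summands is a finite bookkeeping exercise using the standard exact sequences relating $\calo(D)$, $\calo$, and the structure sheaves of the weighted divisors. For the vanishing of self-extensions $\Ext^{>0}(\calt,\calt)=0$, I would use Proposition~\ref{plinebundles}, which turns this into a cohomology computation on the weighted surface; here one uses Serre duality for the Geigle--Lenzing order together with the Fano (anti-nef-canonical) hypothesis to kill higher cohomology of the relevant twists. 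Finally, to see that $\End\calt$ is $2$-hereditary rather than merely of global dimension $2$, I would verify the stronger condition from Herschend--Iyama--Oppermann --- concretely, that $\calt$ has no self-extensions and that the inverse dualizing complex acts on $\add\calt$ in the $2$-representation-infinite way, which by Proposition~\ref{ptame}-type reasoning follows once one knows the algebra has global dimension $2$ and the Serre functor has the expected positivity; again this is a cohomological check.

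Now the case-by-case work. For (i), $\PP^1\times\PP^1$ weighted on a smooth $(1,1)$-divisor $C$: the underlying surface is a del Pezzo of degree $8$, $C$ is an anticanonical-type curve (it is a $(1,1)$-curve, half the anticanonical), and with a single weight there is a two-step order filtration. I would take $\calt$ to be a sum of the four natural line bundles $\calo, \calo(1,0), \calo(0,1), \calo(1,1)$ on the surface together with the one ``fractional'' summand supported on $C$ (the cokernel piece of the order), adjusting twists so that the $\Ext$-vanishing holds; the quiver of $\End\calt$ should come out as a known $2$-representation-infinite algebra. For (ii), $\PP^2$ weighted on a smooth conic with weight $2$: the conic is $-\frac12 K_{\PP^2}$, so $K_X+\Delta = K_{\PP^2}+\frac12 C$ is $-\frac12$ times a line, hence anti-ample, confirming Fano; here the order is an order in $M_2$ over $\PP^2$ ramified on $C$, and I would build $\calt$ from $\calo,\calo(1),\calo(2)$ on $\PP^2$ plus the order-theoretic ``square root'' sheaves, mirroring the $\leq 3$-lines construction of \cite{HIMO} but with the conic replacing the union of lines. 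For (iii), $\PP^2$ weighted on $4$ general lines with all weights $2$: this is the closest to \cite{HIMO}, which already handles $\leq 3$ lines; the new feature is the fourth line, which makes $K_X+\Delta = K_{\PP^2}+2\sum_{i=1}^4 \frac12 L_i = K_{\PP^2}+\sum L_i$ --- this is exactly $\calo(1)$, i.e.\ $-(K_X+\Delta)$ is anti-ample of degree... wait, $-(K_X+\Delta)=\calo(3)-\calo(4)=\calo(-1)$, so I should double-check the weighting convention, but in any case with weights $2$ one gets $\Delta=\sum\frac12 L_i$ and $-(K_X+\Delta)=\calo(3-2)=\calo(1)$, genuinely Fano. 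I would take $\calt$ assembled from $\calo,\calo(1),\calo(2)$ together with the four fractional summands on the $L_i$ and their pairwise intersections, and identify $\End\calt$ with an explicit $2$-representation-infinite quiver algebra.

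The main obstacle, I expect, is not the generation condition (which is essentially formal given Section~\ref{sgen}) nor the global-dimension computation, but pinning down the precise list of twists of the fractional summands so that \emph{all} higher $\Ext$ groups vanish simultaneously: on a weighted surface the relevant cohomology groups of twists of the order involve both the geometry of the surface and the combinatorics of the weights along the $D_i$ and their intersections, and getting a configuration that is simultaneously generating, rigid, and $2$-hereditary is a genuine constraint --- indeed it is exactly the kind of constraint that, when it fails, is conjecturally the reason a non-almost-Fano or non-rational-divisor surface has no such bundle. In practice I would expect to organise this via the canonical tilting bundle of the non-weighted surface (del Pezzo in (i), $\PP^2$ in (ii),(iii)) extended across the weighting one weighted divisor at a time, checking at each stage that rigidity is preserved using the long exact $\Ext$-sequences and Serre duality; the ampleness of $-(K_X+\Delta)$ is what makes each such step go through.
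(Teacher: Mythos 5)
There is a genuine gap, and it is concentrated in cases (ii) and (iii). Your plan for all three surfaces is to assemble $\calt$ out of line bundles over the GL-order (line bundles on the underlying surface together with fractional twists along the weighted divisors) and then hunt for a configuration of twists making the bundle simultaneously generating, rigid and $2$-hereditary. For case (i) this is essentially what the paper does: Proposition~\ref{p11ram} takes the $2(p+1)$ order-line-bundles $\calo_A(i/p,i/p)$ and $\calo_A(i/p,1+i/p)$, checks generation via the Lerner--Oppermann generators of Section~\ref{sgen}, and checks all $\Ext$-vanishing via Proposition~\ref{plinebundles} (note, though, that the summands are fractional twists of $\calo_A$, not sheaves ``supported on $C$'' --- anything supported on the weighted divisor is torsion, hence not locally projective, and cannot be a summand of a tilting bundle). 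But for cases (ii) and (iii) no choice of twists exists: as the paper remarks immediately after Theorem~\ref{texamples}, in those two cases there is \emph{no} $2$-hereditary tilting bundle that is a direct sum of line bundles. So the search you describe as the ``main obstacle'' is not merely delicate, it is empty, and your proposed mirror of the $\leq 3$-lines construction of \cite{HIMO} (for the conic) and of the canonical tilting bundle (for the four lines) cannot close.

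What the paper does instead is introduce higher-rank indecomposable summands by two different mechanisms. For (ii), $\PP^2$ weighted on a conic with weight $2$ is realised as the quotient of $\PP^1\times\PP^1$ by the involution swapping the factors; Theorem~\ref{tskewisGL} identifies the skew group algebra $\calo_{\Xt}\#G$ with the GL-order, and Proposition~\ref{pGstabletilt} together with Theorem~\ref{tfindbasic} pushes the $G$-stable $2$-hereditary tilting bundle $\calo\oplus\calo(1,0)\oplus\calo(0,1)\oplus\calo(1,1)$ down to a basic tilting bundle $A\oplus(A\otimes_{\Xt}\calo(1,0))\oplus(A\otimes_{\Xt}\calo(1,1))$ whose middle summand is an indecomposable rank $2$ bundle. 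For (iii), the paper starts from the canonical tilting bundle $\bigoplus_{D\in[0,2H]}\calo_A(D)$ of \cite{HIMO}, \cite{IL} --- which is tilting but \emph{not} $2$-hereditary --- and mutates its two extreme summands, replacing $\calo_A$ and $\calo_A(2H)$ by the rank $3$ bundles $\Xi$ and $\Omega$ defined by the sequences \eqref{exi} and \eqref{eomega}; the $2$-hereditary property is then verified by an explicit Cox-ring computation (Lemma~\ref{lextxiomega}) showing the relevant $\Ext^1$ into all twists $\Omega(-rK_A)$ vanish. Your proposal contains neither the equivariant/skew-group idea nor the mutation-to-higher-rank idea, and without one of them cases (ii) and (iii) cannot be completed.
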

It is interesting to note that in cases ii) and iii) above, there do not exist 2-hereditary tilting bundles which are direct sums of line bundles. 

\vs
\noindent
\textbf{Notation:} Throughout, we work over an algebraically closed base field $k$ of characteristic zero. The symbol $X$ will always denote a smooth projective variety and $A$, a finite sheaf of algebras on $X$ (in fact, it will always be an order on $X$ as defined in Section~\ref{sGLorders}). We will by default work with left $A$-modules and let $A-\mo$ denote the category of coherent $A$-modules. We let $D^b_c(A)$ denote that bounded derived category of $A-\mo$. 

\section{Geigle-Lenzing orders}  \label{sGLorders} \mlabel{sGLorders}

In this section, we recall the approach in \cite{IL} of studying weighted projective varieties via GL-orders and collect some basic facts regarding this viewpoint. We end with a simple necessary condition for a weighted projective variety to have an $n$-hereditary tilting bundle. 

A {\em (smooth) weighted projective variety} $\mathbb{X}$ consists of the data of a smooth projective variety $X$, a finite set of smooth divisors $D_1,\ldots, D_r$ and integers $p_1,\ldots, p_r \geq 2$ called the {\em weights of the $D_i$}, such that $\cup D_i$ is simple normal crossing, that is, complete locally at any point of $X$, $\cup D_i$ is isomorphic to a union of coordinate hyperplanes. Its {\em dimension} is the dimension of the underlying variety $X$. When the dimension is 2, we will speak of {\em weighted projective surfaces}. We will only consider smooth weighted projective varieties, so we will omit the adjective smooth in future. For reasons that will become clear in Section~\ref{slog}, we will use the notation
$$\mathbb{X} = (X,\sum_i (1 - \tfrac{1}{p_i})D_i) $$
to denote our weighted projective variety. We will often write $\Delta$ for $\sum_i (1 - \tfrac{1}{p_i})D_i)$ in keeping with standard notation in the theory of log varieties. 

The easiest way to associate a category of coherent sheaves to a weighted projective variety is to introduce orders. Let $k(X)$ denote the function field of a smooth projective variety $X$. 
An {\em order} on $X$ is a torsion free coherent sheaf of algebras $A$ on $X$ such that $A \otimes_X k(X)$ is a central simple $k(X)$-algebra. In this paper, we will only deal with the case where $A \otimes_X k(X)$ is a full matrix algebra in $k(X)$ so $A$ embeds in $M_m(k(X))$ for some $m$. 

Let $D$ be a smooth effective divisor on $X$ and $p \geq 2$ be an integer. As in \cite[Section~2]{IL}, we consider the following subalgebra of the matrix algebra $M_p(\ox)$
$$ T_p(D) := 
\begin{pmatrix}
 \ox & \ox(-D) & \ldots & \ox(-D) & \ox(-D) \\
 \ox & \ox & \ldots & \ox(-D) & \ox(-D) \\
 \vdots & \vdots & \ldots &  & \vdots \\
 \ox & \ox & \ldots & \ox & \ox(-D) \\
 \ox & \ox & \ldots & \ox & \ox \\
\end{pmatrix}
$$ 
The {\em standard Geigle-Lenzing (GL)-order} associated to the weighted projective variety $\mathbb{X}$ above is
$$A = T_{p_1}(D_1) \otimes_X \ldots \otimes_XT_{p_r}(D_r) .$$
The {\em category of coherent sheaves on $\mathbb{X}$}, {\sf coh\,}$\mathbb{X}$ can now be defined to be the category $A-\mo$ of coherent $A$-modules. Since we will only be interested in $A$ as far as its category of coherent sheaves is concerned, we will call any algebra Morita equivalent to $A$ a {\em GL-order}. Our $A$-modules will usually be viewed as $(A,\ox)$-bimodules, where the $\ox$-action is central. We collect some basic facts about $A$.

\begin{prop}  \label{pGLorders} \mlabel{pGLorders}
 Let $A$ be a GL-order associated to the weighted projective variety $\mathbb{X}$ above.
\begin{enumerate}
 \item $A$ is an order which embeds in $M_{p}(k(X))$ for some $p\in \mathbb{N}$. 
\item The stalk $A_x$ of $A$ at any closed point $x \in X$ is a ring with global dimension $\dim X$. 
\end{enumerate}
\end{prop}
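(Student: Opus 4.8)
The plan is to prove (i) and (ii) separately; part (i) is essentially formal and part (ii) carries all the weight.

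\textbf{Part (i).} As an $\ox$-module each $T_p(D)$ is a direct sum of copies of $\ox$ and of the line bundle $\ox(-D)$, hence locally free, so $A=T_{p_1}(D_1)\otimes_X\cdots\otimes_X T_{p_r}(D_r)$ is locally free over $\ox$ and in particular torsion free. The inclusions $T_{p_i}(D_i)\hookrightarrow M_{p_i}(\ox)$ are split as maps of $\ox$-modules, so tensoring them produces an embedding $A\hookrightarrow M_{p_1}(\ox)\otimes_X\cdots\otimes_X M_{p_r}(\ox)=M_p(\ox)\subseteq M_p(k(X))$ with $p=\prod_i p_i$. Inverting the invertible ideal sheaves $\ox(-D_i)$, equivalently tensoring with $k(X)$, turns each $T_{p_i}(D_i)$ into $M_{p_i}(k(X))$, so $A\otimes_X k(X)=M_p(k(X))$ is central simple over $k(X)$; thus $A$ is an order.

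\textbf{Part (ii), reductions and lower bound.} Fix a closed point $x$ and put $R=\ox_x$, a regular local ring of dimension $n:=\dim X$; we want $\operatorname{gl.dim}A_x=n$. Divisors missing $x$ contribute nothing: if $x\notin D_i$ then $\ox(-D_i)_x=R$, so $T_{p_i}(D_i)_x\cong M_{p_i}(R)$, and absorbing these matrix factors gives $A_x\cong M_N(\Lambda)$ where, after reindexing so that $D_1,\dots,D_s$ are the divisors through $x$, $\Lambda$ is the tensor product over $R$ of the $T_{p_i}(D_i)_x$ with $i\le s$; by Morita invariance we may replace $A_x$ by $\Lambda$. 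Since $\bigcup D_i$ is simple normal crossing, $\widehat R\cong k[[t_1,\dots,t_n]]$ with $D_1,\dots,D_s$ cut out by distinct coordinates $t_1,\dots,t_s$ ($s\le n$), and completing does not change global dimension, so it suffices to show $\operatorname{gl.dim}\widehat\Lambda=n$ for $\widehat\Lambda=T_{p_1}((t_1))\otimes_{\widehat R}\cdots\otimes_{\widehat R}T_{p_s}((t_s))$ over $\widehat R$. The inequality $\operatorname{gl.dim}\widehat\Lambda\ge n$ is easy: $\widehat\Lambda$ is free as an $\widehat R$-module (each factor is, and freeness survives tensoring over $\widehat R$), so restricting any projective $\widehat\Lambda$-module to $\widehat R$ stays projective; applying this to $\widehat\Lambda/\mmm\widehat\Lambda$, whose restriction to $\widehat R$ is a nonzero free $\widehat R/\mmm$-module and hence has projective dimension $n$ over $\widehat R$, forces $\operatorname{gl.dim}\widehat\Lambda\ge n$.

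\textbf{Upper bound.} Here I see two routes. The quick one: recognise $\widehat\Lambda$ as the $k$-completed tensor product of the one-dimensional hereditary orders $T_{p_j}((t))$ over the discrete valuation ring $k[[t]]$ — each of global dimension $1$, since over a DVR the radical of every indecomposable projective $T_q((t))$-module is again projective — together with the regular ring $k[[t_{s+1},\dots,t_n]]$ of global dimension $n-s$; then subadditivity of global dimension over the field $k$ gives $\operatorname{gl.dim}\widehat\Lambda\le s\cdot 1+(n-s)=n$. The hands-on route, which avoids completed tensor products: resolve each simple $\widehat\Lambda$-module $S$ by iterating $0\to\operatorname{rad}P\to P\to S\to 0$, noting that the successive syzygies are pushed forward along the ring surjections $\widehat\Lambda\to\widehat\Lambda/\widehat\Lambda(1-e)\widehat\Lambda$, and that for a local GL-order this quotient is again Morita equivalent to a GL-order over a regular local ring of dimension $n-1$, so an induction on $n$ closes the bound. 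Either way $\operatorname{gl.dim}A_x=\operatorname{gl.dim}\widehat\Lambda=n$. (Alternatively, one may simply invoke \cite{IL}, where $A\text{-}\mo$ is identified with the coherent sheaves on the associated iterated root stack — a smooth Deligne–Mumford stack of dimension $n$ — which has cohomological dimension $n$; localising recovers the statement for each stalk $A_x$.)

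\textbf{Main obstacle.} Everything except the upper bound on $\operatorname{gl.dim}\widehat\Lambda$ is routine. In the quick route the delicate point is setting up the completed-tensor-product decomposition of $\widehat\Lambda$ and the subadditivity estimate carefully over $k$; in the hands-on route it is checking that the change-of-rings quotients of a local GL-order are again GL-orders of strictly smaller dimension and that the iterated syzygies indeed stay of that shape.
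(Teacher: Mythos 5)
Your proposal is correct in outline but takes a genuinely different (more self-contained) route than the paper. The paper's proof of part (ii) is essentially a citation: it observes that global dimension can be computed complete-locally at $x$, notes that the simple normal crossing hypothesis on $\cup D_i$ is exactly the ``general position'' hypothesis of \cite[Assumption~2.1]{IL}, and then imports \cite[Theorem~2.2c)]{IL} verbatim; part (i) is dismissed as easy. You perform the same Morita/complete-local reduction (discarding divisors missing $x$, completing, reducing to $T_{p_1}((t_1))\otimes_{\widehat R}\cdots\otimes_{\widehat R}T_{p_s}((t_s))$), but then you attempt to prove the local statement yourself rather than cite it. Your lower bound (restriction of projectives to $\widehat R$ applied to $\widehat\Lambda/\mmm\widehat\Lambda$) is complete and correct, and your part (i) is fine. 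What you gain is transparency about where the content actually sits; what you lose is that the upper bound --- the only nontrivial step, and precisely the content of \cite[Theorem~2.2c)]{IL} --- remains a sketch in both of your routes: the ``quick'' route needs a genuine subadditivity theorem for global dimension of completed tensor products over $k$ (true in this module-finite, residue-field-$k$ setting, but it must be stated and justified, not just invoked), and the ``hands-on'' route leaves the induction step (that the change-of-rings quotients are again GL-orders over a regular local ring of dimension $n-1$ and that syzygies stay of that shape) unverified. Your parenthetical fallback is closest in spirit to the paper, but note it slightly misattributes the source: \cite{IL} prove the statement by a direct computation with orders on $\PP^n$, not via an iterated root stack, and ``cohomological dimension $n$'' of a stack is not on its own the same assertion as stalkwise global dimension $n$, so as written that shortcut would need tightening.
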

\begin{proof} It suffices to assume that $A$ is standard. 
 Part i) is easy while part ii) is proved when $X = \mathbb{P}^n$ in \cite[Theorem~2.2c)]{IL}. The proof holds in our case too verbatim, for the global dimension of $A_x$ can be computed complete locally at $x$ and our simple normal crossing assumption on the weighted divisors corresponds precisely to the general position hypothesis in \cite[Assumption~2.1]{IL}. 
\end{proof}\vs

As in \cite{IL}, we will need to consider ``line bundles'' over a GL-order $A$ and their Ext groups. Recall that an $A$-module $\calp$ is {\em locally projective} if the stalk $\calp_x$ at any closed point $x \in X$ is projective as an $A_x$-module. Now $A$ embeds in the full matrix algebra $M_p(k(X))$. Hence the rank of $\calp$ as a sheaf on $X$, denoted here by $\rk_X \calp$, is a multiple of $p$ and we define the {\em $A$-rank} of $\calp$ to be the integer
$$ \rk_A \calp = \tfrac{1}{p}\rk_X \calp .$$
This is a Morita invariant so only depends on the weighted projective variety, not the choice of GL-order representing it. When $\rk_A \calp = 1$, we will often refer to $\calp$ as a {\em line bundle} on the weighted projective variety or $A$. 

To generate line bundles, we start with some auto-equivalences of $A-\mo$. Naturally, given a line bundle $\caln$ on $X$, $-\otimes_X \caln$ induces an auto-equivalence, but the weighted divisors $D_i$ give some extra ones as follows. It is easiest to describe these when $A$ is a standard GL-order, so we assume this for now. Then we have an invertible ideal 

$$J_i := 
\begin{pmatrix}
 \ox(-D_i) & \ox(-D_i) & \ldots & \ox(-D_i) & \ox(-D_i) \\
 \ox & \ox(-D_i) & \ldots & \ox(-D_i) & \ox(-D_i) \\
 \vdots & \vdots & \ldots &  & \vdots \\
 \ox & \ox & \ldots & \ox(-D_i) & \ox(-D_i) \\
 \ox & \ox & \ldots & \ox & \ox(-D_i) \\
\end{pmatrix}
\triangleleft T_{p_i}(D_i)
$$
which satisfies $J_i^{p_i} = T_{p_i}(D_i) \otimes_X \ox(-D_i)$. Hence 
$$I_i := T_{p_1}(D_1) \otimes_X \ldots \otimes_X T_{p_{i-1}}(D_{i-1}) \otimes_X J_i \otimes_X T_{p_{i+1}}(D_{i+1}) \otimes_X\ldots \otimes_X T_{p_r}(D_r)$$ 
is an invertible ideal of $A$ and for any divisor $D \in \text{Div} X$, we obtain the following auto-equivalences of $A-\mo$
$$\calm \mapsto \calm(D + \sum\tfrac{l_i}{p_i}D_i) = \prod_i I_i^{-l_i} \otimes_A \calm \otimes_X \ox(D).$$
This naturally extends existing notation. 

One disadvantage with viewing weighted projective varieties via orders, is that there is no distinguished candidate for the structure sheaf. Instead, we simply declare
$$\oa : = \ox^{p_1} \otimes_X \ldots \otimes_X \ox^{p_r} $$
which is a coherent $A$-module since each $\ox^{p_i}$ is an $M_{p_i}(\ox)$-module and hence a $T_{p_i}(D_i)$-module. It is furthermore locally projective, being a direct summand of $A$. Hence we obtain line bundles $\oa(D + \sum\frac{l_i}{p_i}D_i)$  on $A$. In fact, up to isomorphism, these are all the line bundles by \cite[proof of Proposition~3.2]{C}. The following proposition shows how to compute the Ext groups between line bundles. Its elementary proof is immediate and can for example be found in \cite[Proposition~3.1]{C} and \cite[Lemmas~2.7 and 2.8]{IL} (the slightly more general notation here allows their proof in the weighted projective space case to apply here too). 

\begin{prop}  \label{plinebundles} \mlabel{plinebundles}
 \begin{enumerate}
  \item Let $\calp,\calp'$ be two locally projective $A$-modules. Then the sheaf of $A$-module homomorphisms $\shom_A(\calp,\calp')$ is a vector bundle on $X$ whose $i$-th cohomology is $\Ext^i_A(\calp,\calp')$. 
 \item If $0 \leq l_i < p_i$ and $D$ is a divisor on $X$, then $\shom_A(\oa, \oa(D + \sum\frac{l_i}{p_i}D_i)) = \ox(D)$. 
 \end{enumerate}
\end{prop}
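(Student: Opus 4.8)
\emph{Proof strategy.} The plan is to prove (i) by the standard local-to-global machinery and (ii) by an explicit matrix computation after reducing to a single weighted divisor.

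For (i), the first observation is that $A$ is locally free as an $\ox$-module: each $T_{p_j}(D_j)$ is a finite direct sum of the line bundles $\ox$ and $\ox(-D_j)$ over $\ox$, and tensoring over $\ox$ preserves local freeness. Consequently any locally projective $A$-module $\calp$, being Zariski-locally on $X$ a direct summand of some $A^{\oplus n}$, is itself a vector bundle on $X$, and similarly for $\calp'$. Applying $\shom_A(-,\calp')$ to a local splitting $A^{\oplus n}\cong\calp\oplus\mathcal{Q}$ exhibits $\shom_A(\calp,\calp')$ Zariski-locally as a direct summand of $\shom_A(A^{\oplus n},\calp')\cong(\calp')^{\oplus n}$, which is locally free; a coherent direct summand of a locally free sheaf is $\ox$-flat, hence locally free, so $\shom_A(\calp,\calp')$ is a vector bundle on $X$. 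For the cohomology statement I would invoke the local-to-global spectral sequence $H^p(X,\sext^q_A(\calp,\calp'))\Rightarrow\Ext^{p+q}_A(\calp,\calp')$ in the category of coherent $A$-modules; since $\calp_x$ is projective over $A_x$ for every closed point $x$, the sheaves $\sext^q_A(\calp,\calp')$ vanish for $q\geq1$, so the spectral sequence collapses and yields $\Ext^i_A(\calp,\calp')\cong H^i(X,\shom_A(\calp,\calp'))$.

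For (ii), I would first reduce to the case $r=1$. Writing $A=T_{p_1}(D_1)\otimes_X\cdots\otimes_X T_{p_r}(D_r)$, both $\oa=\bigotimes_j\ox^{p_j}$ and the twisted module $\oa(D+\sum\tfrac{l_j}{p_j}D_j)=\big(\bigotimes_j(I_j^{-l_j}\otimes_{T_{p_j}(D_j)}\ox^{p_j})\big)\otimes_X\ox(D)$ decompose compatibly with the tensor factorisation of $A$. Since each $\ox^{p_j}$ is a \emph{projective} $T_{p_j}(D_j)$-module, the sheaf-$\Hom$ over $A$ of such decomposable modules factors as the $\otimes_X$-product of the sheaf-$\Hom$'s over the individual factors $T_{p_j}(D_j)$, and the central twist by $\ox(D)$ can be pulled out; it therefore suffices to show $\shom_{T_p(D)}(\ox^p,\,J^{-l}\otimes_{T_p(D)}\ox^p)\cong\ox$ for $0\leq l<p$.

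Finally, identifying $\ox^p$ with the projective module $T_p(D)e_{11}$ (its first column), one has $\shom_{T_p(D)}(\ox^p,\,J^{-l}\otimes_{T_p(D)}\ox^p)\cong e_{11}J^{-l}e_{11}$, the $(1,1)$-entry of the invertible fractional ideal $J^{-l}\subset M_p(k(X))$. From $J^p=T_p(D)\otimes_X\ox(-D)$ one gets $J^{-l}=J^{\,p-l}\otimes_X\ox(D)$ for $1\leq l\leq p$, and a direct count of matrix entries (a shortest-path argument on the grid of $\ox$- versus $\ox(-D)$-entries, as in \cite[Lemmas~2.7 and~2.8]{IL}) shows that the $(1,1)$-entry of $J^{m}$ is $\ox(-D)$ for $1\leq m\leq p-1$; hence the $(1,1)$-entry of $J^{-l}$ is $\ox$ for $0\leq l<p$. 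Reassembling over the $r$ factors and restoring the $\ox(D)$-twist gives $\shom_A(\oa,\oa(D+\sum\tfrac{l_i}{p_i}D_i))=\ox(D)$. The one genuinely load-bearing point is the tensor-product compatibility of sheaf-$\Hom$ used in the reduction step, which follows from the local projectivity established in (i); after that, everything is bookkeeping with the matrix entries of powers of $J$ together with the standard $\Ext$ spectral sequence.
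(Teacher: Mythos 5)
Your proof is correct and follows essentially the route the paper has in mind: the paper omits the argument as "elementary" and cites \cite[Proposition~3.1]{C} and \cite[Lemmas~2.7, 2.8]{IL}, which is precisely your combination of the local-to-global Ext spectral sequence (collapsing by local projectivity) with the matrix-entry computation of powers of $J$ via $\ox^p \simeq T_p(D)e_{11}$. Both your reduction to a single weighted divisor and the identification of the $(1,1)$-entry of $J^{-l}$ check out, so there is nothing to add.
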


One feature of orders $A$ on $X$ is that they have a {\em trace map} $\tr:A \lm \ox$ which is $\ox$-linear. For GL-orders, $A$ embeds in the matrix algebra $M_p(k(X))$ and $\tr$ is just the restriction of the usual trace map. When $A$ is a standard GL-order, it is easy to see the image lies in $\ox$. In general, this follows from the theory of orders. We also obtain a trace pairing $A \times A \lm \ox: (a,b) \mapsto \tr(ab)$.  
The module category $A-\mo$ also has Serre duality. Indeed, the following is well-known.

\begin{prop}  \label{pSerredual}  \mlabel{pSerredual}
 Let $A$ be a $GL$-order associated to the smooth projective variety $X$ weighted on $D_1,\ldots, D_r$ with weights $p_1,\ldots, p_r$. 
\begin{enumerate}
 \item $A-\mo$ has a Serre functor given by $\nu:=\omega_A \otimes_A (-) [\dim X]$ where $\omega_A$ is the {\em canonical $A$-bimodule} defined by 
$$ \omega_A = \shom_X(A,\omega_X) .$$
 \item This bimodule can be computed in terms of the weights as 
$$\omega_A = A(K_X + \sum_i (1 - \tfrac{1}{p_i})D_i)$$
where $K_X$ is a canonical divisor on $X$. In particular, 
$$\nu = (-)(K_X + \sum_i (1 - \tfrac{1}{p_i})D_i)[\dim X]$$
\end{enumerate}
\end{prop}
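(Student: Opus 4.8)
The plan is to prove (i) and (ii) separately, the first being an instance of relative Serre duality for the pair $(X,A)$ and the second a concrete computation with the matrix orders $T_{p_i}(D_i)$. As in the proof of Proposition~\ref{pGLorders}, it suffices to treat the standard GL-order $A = T_{p_1}(D_1)\otimes_X\cdots\otimes_X T_{p_r}(D_r)$: a Serre functor is intrinsic to the category $A-\mo$, and the bimodule and line-bundle notation occurring in the statement refers to this standard model.

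For part (i), I would first record three preliminary facts: $A$ is locally free of finite rank as an $\ox$-module (each $T_p(D)$ is a direct sum of copies of $\ox$ and $\ox(-D)$, and local freeness is preserved by $\otimes_X$); the category $A-\mo$ is $\Hom$-finite over $k$ because $X$ is proper; and, by Proposition~\ref{pGLorders}(ii), every stalk $A_x$ has finite global dimension, so every object of $D^b_c(A)$ is perfect, i.e.\ quasi-isomorphic to a bounded complex of locally projective $A$-modules. It then suffices to exhibit, naturally in locally projective $M,N$, an isomorphism $\Rhom_A(M,N)^\vee \cong \Rhom_A(N, \omega_A\otimes_A M[\dim X])$. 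The heart of the matter is a sheaf-level identity
$$R\shom_A(M,N)\otimes_X\omega_X \;\cong\; R\shom_A(N,\ \omega_A\otimes_A M)\quad\text{in } D^b_c(X):$$
for $M$ locally projective one has $\shom_A(M,N)\cong \shom_A(M,A)\otimes_A N$, and from $\omega_A = \shom_X(A,\omega_X)$ together with local freeness of $A$ one gets $\omega_A\otimes_A M \cong \shom_X(\shom_A(M,A),\omega_X)$; plugging these in (and repeating the manoeuvre in the $N$ variable) produces an $\ox$-linear isomorphism, whose naturality one checks. Applying $R\Gamma(X,-)$ and ordinary Serre duality on the smooth projective variety $X$ then gives the displayed isomorphism, and one finally extends from locally projective modules to all of $D^b_c(A)$ via the perfectness noted above. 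Alternatively, this is a special case of the known Serre duality for orders of finite homological dimension on a smooth projective variety, and one could simply cite it.

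For part (ii), local freeness of $A$ over $\ox$ gives $\omega_{B\otimes_X C} \cong B^\vee\otimes_X\omega_C$ with $B^\vee := \shom_X(B,\ox)$, hence by induction $\omega_A \cong (T_{p_1}(D_1)^\vee\otimes_X\cdots\otimes_X T_{p_r}(D_r)^\vee)\otimes_X\omega_X$; so it remains to compute $T_p(D)^\vee$. A direct matrix computation identifies $\shom_X(T_p(D),\ox)$ with the order having $\ox$ on and above the diagonal and $\ox(D)$ strictly below it, which is precisely $J\otimes_X\ox(D)$; since $J^p = T_p(D)\otimes_X\ox(-D)$ this equals $J^{-(p-1)}$, so $\omega_{T_p(D)} = J^{-(p-1)}\otimes_X\omega_X = T_p(D)(K_X+(1-\tfrac1p)D)$. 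Because the ideals $I_i$ (the extensions of the $J_i$ to $A$) commute and $\prod_i I_i^{-(p_i-1)}$ is, slot by slot, the $\ox$-tensor product of the $J_i^{-(p_i-1)}$, tensoring these computations together yields $\omega_A \cong \big(\prod_i I_i^{-(p_i-1)}\big)\otimes_X\omega_X = A(K_X+\sum_i(1-\tfrac1{p_i})D_i)$; substituting into (i) gives the stated formula for $\nu$.

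The main obstacle is part (i): one must set up the sheaf-level duality isomorphism with care — keeping track of the left and right module structures on $\omega_A$ and on the various $\shom$ sheaves — and justify the reduction of an arbitrary object of $D^b_c(A)$ to a perfect complex, after which everything follows from Serre duality on $X$. Part (ii) is then essentially bookkeeping once the single-divisor case $T_p(D)^\vee$ has been computed by hand.
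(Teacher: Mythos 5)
Your route is in substance the paper's: for part (i) the paper simply cites Serre duality for orders, namely \cite[Proposition~6.5]{LO} --- exactly the ``alternatively, cite it'' option you offer --- and for part (ii) it says only that the formula is ``a computation using the trace pairing'', which is what your matrix computation is: identifying $\shom_X(T_p(D),\ox)$ inside $M_p(k(X))$ with the lattice having $\ox$ on and above the diagonal and $\ox(D)$ strictly below it is precisely the trace-form identification (note it is a $T_p(D)$-bimodule, not an order as you call it), and the steps $T_p(D)^\vee\simeq J\otimes_X\ox(D)=J^{-(p-1)}$, $\omega_{B\otimes_XC}\simeq B^\vee\otimes_X\omega_C$ for locally free $B,C$, and the slot-by-slot assembly into $\prod_iI_i^{-(p_i-1)}\otimes_X\ox(K_X)=A(K_X+\sum_i(1-\tfrac1{p_i})D_i)$ are all correct. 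So part (ii) is fine, and your self-contained sketch of (i) is more than the paper does.

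There is, however, a dualization slip in the key display of your part (i). As written, $R\shom_A(M,N)\otimes_X\omega_X\cong R\shom_A(N,\omega_A\otimes_AM)$ is false already for $A=\ox$ and line bundles $M,N$: the left side is $M^{-1}\otimes N\otimes\omega_X$ while the right side is $N^{-1}\otimes M\otimes\omega_X$. The identity your own ingredients actually produce is
$$ R\shom_A(N,\ \omega_A\otimes_AM)\ \cong\ \shom_X\bigl(R\shom_A(M,N),\ \omega_X\bigr), $$
obtained by combining $\shom_A(M,N)\cong\shom_A(M,A)\otimes_AN$, $\omega_A\otimes_AM\cong\shom_X(\shom_A(M,A),\omega_X)$ and the adjunction $\shom_A(N,\shom_X(P,\omega_X))\cong\shom_X(P\otimes_AN,\omega_X)$ for a right $A$-module $P$. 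With this corrected identity, applying $R\Gamma$ and Serre duality on $X$ in the form $H^{n-i}(X,\mathcal{F}^{\vee}\otimes\omega_X)\cong H^i(X,\mathcal{F})^*$ to the vector bundle $\mathcal{F}=\shom_A(M,N)$ of Proposition~\ref{plinebundles} does yield $\Rhom_A(M,N)^*\cong\Rhom_A(N,\omega_A\otimes_AM[\dim X])$; with the identity as you stated it, the final step would not go through. The remaining points --- Hom-finiteness, perfectness of objects of $D^b_c(A)$ via Proposition~\ref{pGLorders}(ii), and the extension from locally projective modules --- are fine.
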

\begin{proof}
The proof of part i) can be found in \cite[Proposition~6.5]{LO}. Part ii) is a computation using the trace pairing.
\end{proof}\vs

\section{The Cox ring}  \label{scox}  \mlabel{scox}

The Cox ring is a very useful tool to aid in the calculation of endomorphism algebras of tilting bundles and more generally, Ext spaces. In this section, we give an elementary version for the GL-orders we are interested in. 

Let $X$ be a smooth projective variety where $\Pic X$ is a free abelian group. Then we may lift $\Pic X$ to a subgroup $\Pic^{\uparrow} X$ of $\Div X$, the group of divisors on $X$. We will view the Cox ring of $X$ as the $\Pic X$-graded algebra 
$$ R_X :=\bigoplus_{\call \in \text{Pic} X} H^0(\call) .$$
To describe the multiplication, we identify $R_X$ with $\bigoplus_{D \in \text{Pic}^{\uparrow} X} H^0(\ox(D))$. Now the $\ox(D)$ are all subsheaves of the constant sheaf $k(X)$, so multiplication in $k(X)$ induces an associative multiplication on $R_X$. We will identify the Picard group  of $X$ with its divisor class group and, given a divisor $D \subset X$, let $[D]$ denote the associated divisor class. 

Let $\mathbb{X}$ be the weighted projective variety obtained by weighting $X$ at the divisors $D_1,\ldots, D_r$ with weights $p_1,\ldots, p_r$ and $A$ be the corresponding standard GL-order. Let $\mathbb{L}$ be the group generated by $\Pic X$ and the $[\tfrac{1}{p_i}D_i], i = 1,2, \ldots, r$ subject to the relations $p_i[\tfrac{1}{p_i}D_i] = [D_i]$. We call $\mathbb{L}$ the {\em divisor class group} of $\mathbb{X}$. When there is no chance of confusion, we will abuse notation by dropping brackets, and write for example  $\tfrac{1}{p_i}D_i \in \mathbb{L}$. It will be convenient to introduce the following notation
$$[0,\Delta] : = \left\{  \sum_{i=1}^r \tfrac{l_i}{p_i}D_i \mid 0 \leq l_i  \leq 1 - \tfrac{1}{p_i}, l_i \in \Z\right\}$$
To define the Cox ring of $\mathbb{X}$, we first need to introduce the {\em sheaf Cox ring} of $A$ which is the following $\mathbb{L}$-graded sheaf of algebras on $X$. 
$$\mathcal{R}_A := \bigoplus_{D \in \text{Pic}^{\uparrow} X} \bigoplus_{D' \in [0,\Delta]} A(D + D').$$
Abusing notation as above, we will often just write $\mathcal{R}_A = \bigoplus_{D \in \mathbb{L}} A(D)$. To define the multiplication, we need to choose an isomorphism $\s_i: \ox(D_i) \simeq \ox(D_i^{\uparrow})$ where $D_i^{\uparrow} \in \Pic^{\uparrow} X$. We view $\s_i$ as an element of $k(X)$. Given $D_1,D_2 \in \Pic^{\uparrow} X$ and $D'_1 = \sum \tfrac{l_{1i}}{p_i}D_i ,D'_2= \sum \tfrac{l_{2i}}{p_i} \in [0,\Delta]$, the multiplication is given by the composite
$$A(D_1 + D'_1) \otimes_X A(D_2 + D'_2) \xrightarrow{\text{nat}} A(D_1 + D_2 + D'_1 + D'_2) \xrightarrow{\s} A(D + D') $$
where $D \in \Pic^{\uparrow}X, D' \in [0,\Delta]$ are chosen so that $D_1 + D_2 + D'_1 + D'_2 \sim D + D'$ and $\s = \prod_{i \in I} \s_i$ where
$$I = \{i \mid l_{i1} + l_{i2} \geq p_i \}.$$
We omit the elementary verification that multiplication is associative. Note that changing the isomorphisms $\s_i$ gives an isomorphic graded algebra. 

We define the {\em Cox ring} of $A$ to be
$$ R_A := \Hom_A(A, \mathcal{R}_A) = H^0(\mathcal{R}_A) = \bigoplus_{D \in \mathbb{L}} H^0(A(D)) $$
and note that its degree zero component is $H^0(A)$. The algebra structure on $\mathcal{R}_A$ induces an algebra structure on $R_A$. Let $e \in H^0(A)$ be the primitive diagonal idempotent corresponding to the direct summand $\calo_A$ of $A$. We then define the {\em Cox ring} of $\mathbb{X}$ or {\em reduced Cox ring} of $A$ to be
$$ R_{\mathbb{X}} := e R_A e = \bigoplus_{D \in \mathbb{L}} \Hom_A(\calo_A,\calo_A(D)).$$
The $\mathbb{L}$-graded left $A$-module $\bigoplus_{D \in \mathbb{L}} \calo_A(D)$ is naturally a right $R_{\mathbb{X}}$-module. Note  that $R_X$ is just the subalgebra of $R_{\mathbb{X}}$ consisting of the components in $\Pic X \subseteq \mathbb{L}$. Also, if we picked a different primitive diagonal idempotent to $e$, we would have obtained an isomorphic algebra. 

\section{$n$-hereditary tilting bundles}  \label{snhereditary}  \mlabel{snhereditary}

We first recall a generalisation of hereditary algebras due to Herschend-Iyama-Opperman. Let $\Lambda$ be a finite dimensional $k$-algebra of global dimension $n$. It has a Serre functor 
$$\nu = D \circ \Rhom_{\L}(-,\L): D^b(\Lambda) \lm D^b(\Lambda)$$
where $D$ denotes the $k$-linear dual and $D^b(\L)$ denotes the bounded derived category of $\L$-modules. Let $\nu_n = \nu \circ [-n]$. 

\begin{defn}  \label{dnhered}  \mlabel{dnhered}
(\cite[Proposition~3.2, Definition~2.7]{HIO}) We say $\L$ is {\em $n$-hereditary} if for all $i \geq 0$, $\nu_n^{-i} \L$ has cohomology only in degrees divisible by $n$, that is, $H^j(\nu_n^{-i}) = 0$ if $n\nmid j$. If furthermore, $\nu_n^{-i}(\L)$ is always a module, then we say that $\L$ is {\em $n$-representation infinite}. In this case, we say $\Lambda$ is {\em tame} if the preprojective algebra 
$$\Pi(\Lambda)  = \bigoplus_i\Hom(\Lambda, \nu_n^{-i}\Lambda)$$
is a finitely generated module over a commutative noetherian ring. 
\end{defn}

These algebras have particularly nice properties and it is natural to use tilting theory to try to construct lots of interesting examples. We consider hence a weighted projective variety $\mathbb{X}$ with associated GL-order $A$. Using our order point of view, the definition of a tilting bundle is as follows

\begin{defn}  \label{dtilting}  \mlabel{dtilting}
 A {\em tilting bundle} on $\mathbb{X}$ or $A$ is a locally projective coherent $A$-module $\calt$ such that $\Ext_A^i(\calt,\calt) = 0$ for all $i \geq 0$ and $\calt$ generates $D^b_c(A)$.  
\end{defn}

Tilting bundles give rise to $n$-hereditary algebras under the following conditions.

\begin{prop} \label{pnheredbundle}  \mlabel{pnheredbundle}
 Let $\calt$ be a tilting bundle on a GL-order $A$ on $X$ and $n=\dim X$. Then $\L = \End_A \calt$ is $n$-hereditary if and only if it satisfies the following two conditions
\begin{enumerate}
 \item $\Ext^j_A(\calt, \omega_A^{-1} \otimes_A \calt) = 0$ for all $j > 0$ and,
 \item $\Ext^j_A(\calt, \omega_A^{-i} \otimes_A \calt) = 0$ for all $i \geq 2$ and $0 < j < n$.  
\end{enumerate}
In this case, $\L$ is actually $n$-representation infinite and $\Ext^n_A(\calt, \omega_A^{-i} \otimes_A \calt) = 0$ for all $i \geq 2$. Furthermore, $\Hom_A(\calt, \omega_A^{i} \otimes_A \calt) = 0$ for all $i > 0$.

Finally, $\Lambda$ is tame if and only if the orbit algebra $\bigoplus_i \Hom_A(\calt, \omega_A^{-i} \otimes_A \calt)$ is a finitely generated module over a commutative noetherian ring. 
\end{prop}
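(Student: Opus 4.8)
\noindent\textit{Proof proposal.}
The plan is to transport everything across the tilting equivalence and then argue on the side of $D^b_c(A)$. As $\calt$ is a tilting bundle, $\Phi:=\Rhom_A(\calt,-)\colon D^b_c(A)\iso D^b(\L)$ is a triangle equivalence with $\Phi(\calt)\cong\L$, and since $\calt$ generates $D^b_c(A)$ (Definition~\ref{dtilting}) the algebra $\L$ has finite global dimension. Because $\calt$ and every $\omega_A^{-i}\otimes_A\calt$ are locally projective, Proposition~\ref{plinebundles}(i) and $\dim X=n$ force $\Rhom_A(\calt,\omega_A^{-i}\otimes_A\calt)$ to have cohomology concentrated in degrees $0,\dots,n$. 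By Proposition~\ref{pSerredual} the Serre functor of $D^b_c(A)$ is $\nu=\omega_A\otimes_A(-)[n]$, and as a triangle equivalence commutes with Serre functors, $\Phi$ identifies $\nu$ with the Serre functor of $D^b(\L)$; since $\omega_A$ is an invertible bimodule this gives, for all $i\in\Z$,
$$\nu_n^{-i}\L\;\cong\;\Phi\!\left(\omega_A^{-i}\otimes_A\calt\right),\qquad H^j(\nu_n^{-i}\L)\;\cong\;\Ext^j_A(\calt,\omega_A^{-i}\otimes_A\calt),\qquad \Hom_{D^b(\L)}(\L,\nu_n^{i}\L)\;\cong\;\Hom_A(\calt,\omega_A^{i}\otimes_A\calt),$$
compatibly with composition. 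Consequently the cohomology condition defining $n$-heredity (Definition~\ref{dnhered}) translates into the requirement that $\Ext^j_A(\calt,\omega_A^{-i}\otimes_A\calt)=0$ whenever $i\ge1$ and $0<j<n$ (the case $i=0$ being automatic, and $\Ext^{>n}_A$ vanishing). Conditions (i) and (ii) obviously imply this; moreover condition (i) says that $\nu_n^{-1}\L$ is a module, and since $\nu_n^{-1}\L\cong\Rhom_\L(D\L,\L)[n]$ this forces $\Ext^{>n}_\L(D\L,\L)=0$, whence $\L$ has global dimension exactly $n$ by a standard dimension-shifting argument (using finiteness of the global dimension). So conditions (i)--(ii) imply $\L$ is $n$-hereditary; this is the easy implication.

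For the converse and the remaining assertions, I would invoke the structure theory underlying Definition~\ref{dnhered} (\cite{HIO}): an algebra of finite global dimension $\le n$ which is $n$-hereditary is either $n$-representation finite or $n$-representation infinite, the latter exactly when each $\nu_n^{-i}\L$ ($i\ge0$) is a module. It thus suffices to rule out the $n$-representation finite case, and this is the main obstacle. An $n$-representation finite algebra has finite-dimensional preprojective algebra, so by the dictionary above $\Hom_A(\calt,\omega_A^{-i}\otimes_A\calt)=0$ for all $i\gg0$; combined with $n$-heredity and $\Ext^{>n}_A=0$, this makes $\nu_n^{-i}\L=\Phi(\omega_A^{-i}\otimes_A\calt)$ concentrated in the single degree $n$ (and nonzero there, since $\calt$ generates) for all large $i$. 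Applying $\Phi^{-1}$, the sheaf $\omega_A^{-i}\otimes_A\calt$ would then be isomorphic to $\Phi^{-1}(\text{a nonzero }\L\text{-module})[-n]$, a complex whose cohomology is supported in degrees $\le n$ with nonzero top cohomology in degree $n$; as $n=\dim X\ge1$, this is absurd. (One can also argue that a finite $\nu_n$-orbit forces a power of $\omega_A$ to act trivially on the numerical Grothendieck group of $D^b_c(A)$, and then a Serre-duality computation with the Euler pairing forces $\L$ to be semisimple, which is impossible since $D^b_c(A)$ has nonzero $\Ext^n$ groups.) Hence $\L$ is $n$-representation infinite, i.e.\ $\Ext^j_A(\calt,\omega_A^{-i}\otimes_A\calt)=0$ for all $i\ge0$ and $j\ne0$; this gives back conditions (i) and (ii), the vanishing $\Ext^n_A(\calt,\omega_A^{-i}\otimes_A\calt)=0$ for $i\ge2$ (indeed $i\ge1$), and finishes the equivalence.

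It remains to treat the last two statements. For $i>0$, Serre duality (Proposition~\ref{pSerredual}(i)) together with the invertibility of $\omega_A$ identifies $\Hom_A(\calt,\omega_A^{i}\otimes_A\calt)$ with the $k$-linear dual of $\Ext^n_A(\omega_A^{i}\otimes_A\calt,\omega_A\otimes_A\calt)\cong\Ext^n_A(\calt,\omega_A^{-(i-1)}\otimes_A\calt)$, which vanishes since $i-1\ge0$ (by the tilting property when $i=1$, and by $n$-representation infiniteness when $i\ge2$). Finally, once $\L$ is $n$-representation infinite every $\nu_n^{-i}\L$ is a module, and the composition-compatible isomorphisms above give an isomorphism of graded algebras between the preprojective algebra $\Pi(\L)=\bigoplus_i\Hom_{D^b(\L)}(\L,\nu_n^{-i}\L)$ of Definition~\ref{dnhered} and the orbit algebra $\bigoplus_i\Hom_A(\calt,\omega_A^{-i}\otimes_A\calt)$; hence $\L$ is tame if and only if the latter is a finitely generated module over a commutative noetherian ring. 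The genuinely delicate step in this plan is the exclusion of the $n$-representation finite case; all the rest is transport of structure along $\Phi$ combined with Serre duality.
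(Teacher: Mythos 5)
Your overall framework (transport along $\Phi=\Rhom_A(\calt,-)$, compatibility with Serre functors, the identification $H^j(\nu_n^{-i}\L)\cong\Ext^j_A(\calt,\omega_A^{-i}\otimes_A\calt)$, the amplitude bound $[0,n]$ coming from Proposition~\ref{plinebundles}, and the Serre-duality argument for $\Hom_A(\calt,\omega_A^i\otimes_A\calt)=0$) is the same as the paper's. The genuine gap is in the step you yourself flag as delicate: the exclusion of the $n$-representation finite case. Having reduced to the situation $\nu_n^{-i}\L\cong M[-n]$ for a nonzero module $M$ and $i\gg 0$, you derive the contradiction from the assertion that $\Phi^{-1}(M)[-n]$ has \emph{nonzero} cohomology in degree $n$, i.e.\ that $H^0(\calt\otimes^{\mathbf{L}}_{\L}M)=\calt\otimes_{\L}M\neq 0$ for a nonzero module $M$. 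This is not justified and is false in general: under a tilting equivalence a simple module can correspond to a purely shifted sheaf. For instance, for $X=\PP^1$, $\calt=\ox\oplus\ox(1)$ and $\L$ the Kronecker algebra, the simple at the $\ox(1)$-vertex satisfies $\calt\otimes^{\mathbf{L}}_{\L}S\cong\ox(-1)[1]$, so $\calt\otimes_{\L}S=0$. Since you give no argument special to your particular $M$, the contradiction does not go through as written; and the parenthetical alternative (finite $\nu_n$-orbit, triviality on the numerical Grothendieck group, semisimplicity of $\L$) is too sketchy to substitute — in particular $n$-representation finiteness does not give a finite $\nu_n$-orbit in $D^b(\L)$, only periodicity up to shift.

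The fix is close to what you already have: instead of trying to produce cohomology in positive degree $n$, produce it in a \emph{negative} degree. This is the paper's route: if $\L$ is not $n$-representation infinite, then by \cite[Lemma~3.5]{HIO} there is an indecomposable projective $P$ and $i>0$ with $\nu_n^{-(i-1)}P$ injective, whence $\nu_n^{-i}P=\Rhom_\L(D\nu_n^{-(i-1)}P,\L)[n]$ is a nonzero complex concentrated in degree $-n$; this directly contradicts the amplitude bound $H^j(\nu_n^{-i}\L)\cong\Ext^j_A(\calt,\omega_A^{-i}\otimes_A\calt)=0$ for $j<0$, which you established at the outset. (A smaller organizational point: the equivalence of (i)--(ii) with $n$-heredity does not actually need the dichotomy or the exclusion of the finite case at all — it follows directly from the cohomological dictionary together with $\operatorname{gl.dim}\L=n$, exactly as you argue in the easy direction; the representation-infinite statement is the only place the extra argument is needed.) Your treatment of the last two assertions (Serre duality for $\Hom_A(\calt,\omega_A^i\otimes_A\calt)$ and the identification of $\Pi(\L)$ with the orbit algebra for the tameness criterion) matches the paper and is fine once $n$-representation infiniteness is correctly established.
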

\begin{proof}
Since $\calt$ is a tilting bundle, $\Rhom_A(\calt,-): D^b_{c}(A) \lm D^b_{fg}(\L)$ is a derived equivalence and in particular, commutes with the Serre functor and shifts. Hence 
\begin{equation}
\nu_n^{-i} \L = \Rhom_A(\calt, \omega_A^{\otimes -i} \otimes_A \calt).
\label{epnheredbundle}
\end{equation}
 Putting $i = 1$, condition i) corresponds to the fact that $\nu_n^{-1}(\L) = \Rhom_{\L}(D\L,\L)[n]$ is a module so $\L$ has global dimension $n$ as observed in \cite[Observation~2.5]{HIMO}. Conditions i) and ii) now correspond precisely to the fact that $H^j(\nu_n^{-i}\L) = 0$ if $n \nmid j$. 

If $\L$ is not $n$-representation infinite, then there is some indecomposable projective $\L$-module $P$ and $i >0$ such that $\nu_n^{-i} P$ is not a module. Then by \cite[Lemma~3.5]{HIO}, we must have $\nu_n^{-i+1}P$ is an injective module. But then $\nu_n^{-i} P = \Rhom_{\L}(D \nu_{n}^{-i+1}P, \L)[n]$ is a non-zero complex living in degree $-n$. This contradicts equation~(\ref{epnheredbundle}) so $\L$ must also be $n$-representation infinite and $\Ext^n_A(\calt, \omega_A^{-i} \otimes_A \calt) = 0$ for all $i \geq 1$. 

Serre duality for $A-\mo$ now shows that for $i \geq 1$ we have 
$$  0  = \Hom_A(\omega_A^{-i} \otimes_A \calt, \omega_A \otimes_A \calt) = \Hom_A(\calt, \omega_A^{i+1} \otimes_A \calt).$$

The final assertion characterising when $\Lambda$ is tame is now clear from the above discussion. 
\end{proof}\vs

Recall that if a weighted projective curve has a tilting bundle $\calt$ with hereditary endomorphism ring $\L$, then $\L$ is of infinite representation type. The above proposition generalises this fact. We are led to the following 

\begin{defn}  \label{dnheredbundle}  \mlabel{hnheredbundle}
 Let $X$ be a smooth projective variety of dimension $n$. A tilting bundle $\calt$ on a GL-order $A$ on $X$ (or weighted projective variety) is {\em $n$-hereditary} if $\Ext^j_A(\calt, \omega_A^{-i} \otimes_A \calt) = 0$ for all $i,j \geq 0$.
\end{defn}

The next result gives an easy necessary criterion for a weighted projective variety to have an $n$-hereditary tilting bundle. 
\begin{cor}  \label{cneccriterion}  \mlabel{cneccriterion}
Let $\mathbb{X} = (X,\sum_i (1 - \frac{1}{p_i})D_i)$ be a weighted projective variety of dimension $n$. Suppose that $\mathbb{X}$ has an $n$-hereditary tilting bundle. Let $\Delta = \sum_i (1 - \frac{1}{p_i})D_i$. Then for all $j > 0$ and non-negative common multiples $r$ of all the $p_i$, we have 
$$H^j(\ox(-r(K_X + \Delta))) = 0.$$ 
Furthermore, $H^0(\ox(r(K_X + \Delta))) = 0$ for any positive common multiple $r$ of all the $p_i$.
\end{cor}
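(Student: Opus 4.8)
The plan is to feed the $n$-hereditary vanishing hypothesis into the criterion of Proposition~\ref{pnheredbundle} together with Serre duality, exactly as in the setup of equation~(\ref{epnheredbundle}). Let $\calt$ be the $n$-hereditary tilting bundle, so by Definition~\ref{dnheredbundle} we have $\Ext^j_A(\calt, \omega_A^{-i}\otimes_A\calt)=0$ for all $i\geq 0$ and $j\geq 0$; in particular $\End_A\calt$ is $n$-representation infinite by Proposition~\ref{pnheredbundle}. The first observation is that $\omega_A = A(K_X+\Delta)$ by Proposition~\ref{pSerredual}, so $\omega_A^{-i}\otimes_A(-)$ is the autoequivalence $(-)(-i(K_X+\Delta))$. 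Now choose $r$ a common multiple of all the $p_i$. Then $r(K_X+\Delta) = r K_X + \sum_i r(1-\tfrac1{p_i})D_i$ is an \emph{ordinary} divisor on $X$ (all the fractional parts clear because $p_i\mid r$), so $\omega_A^{-r}\otimes_A\calt$ makes sense and, crucially, $A(-r(K_X+\Delta))$ restricted to the summand $\oa$ is just $\oa\otimes_X\ox(-r(K_X+\Delta))$. The idea is then to compare $\Ext^j_A(\calt,\omega_A^{-r}\otimes_A\calt)$ against $H^j$ of the relevant line bundle on $X$, using Proposition~\ref{plinebundles}(i), which says $\Ext^j_A$ between locally projective modules is the sheaf cohomology $H^j$ of the $\shom_A$ vector bundle on $X$.

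The key step is to extract from the $A$-level vanishing a statement about $\ox(-r(K_X+\Delta))$ alone. Since $\calt$ is a tilting bundle it generates $D^b_c(A)$, and in particular $\oa$ is a direct summand of a complex built from $\calt$; more simply, one argues that $\oa$ (or rather a direct summand of $\calt$) contributes the line-bundle cohomology we want. Concretely: $\oa$ is locally projective (a summand of $A$), and by Proposition~\ref{plinebundles}(i), $\Ext^j_A(\oa, \oa(-r(K_X+\Delta))) = H^j(\shom_A(\oa,\oa(-r(K_X+\Delta))))$. Because $-r(K_X+\Delta)$ is an integral divisor with zero fractional part at every $D_i$, Proposition~\ref{plinebundles}(ii) (the case all $l_i=0$) identifies $\shom_A(\oa,\oa(-r(K_X+\Delta))) = \ox(-r(K_X+\Delta))$. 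So the heart of the argument is to deduce $\Ext^j_A(\oa,\oa(-r(K_X+\Delta)))=0$ for $j>0$ from the hypothesis on $\calt$. This is where one uses that $\calt$ generates: write $\oa$ in the triangulated category generated by $\calt$, apply $\Rhom_A(-,\omega_A^{-r}\otimes_A(-))$ and the self-orthogonality $\Ext^{>0}_A(\calt,\omega_A^{-r}\otimes_A\calt)=0$ (valid since $r$ is a sum of $r/p_i$ copies of... more precisely since $\omega_A^{-r}$ is a power of $\omega_A^{-1}$ and we may iterate condition~ii, or simply invoke Definition~\ref{dnheredbundle} directly with $i=r$) to conclude $\Ext^{>0}_A(\calt,\omega_A^{-r}\otimes_A\calt')=0$ for every $\calt'$ in $\add$ of summands, and then a dévissage along the resolution of $\oa$ by $\add\calt$ passes the vanishing to $\oa$. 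This gives $H^j(\ox(-r(K_X+\Delta)))=0$ for $j>0$.

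For the last sentence, I would dualise. By Serre duality on $X$, $H^0(\ox(r(K_X+\Delta)))^{\vee} \cong H^n(\ox(K_X - r(K_X+\Delta)))$; alternatively, and more in keeping with the paper's machinery, use the final clause of Proposition~\ref{pnheredbundle}: $\Hom_A(\calt,\omega_A^{i}\otimes_A\calt)=0$ for all $i>0$. Taking $i=r$ a positive common multiple of the $p_i$, the same identification via Proposition~\ref{plinebundles} (applied to $\omega_A^r\otimes_A(-) = (-)(r(K_X+\Delta))$, again an integral twist) and the generation argument give $\Hom_A(\oa,\oa(r(K_X+\Delta))) = H^0(\ox(r(K_X+\Delta))) = 0$.

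The main obstacle I anticipate is the dévissage step: passing the Ext-vanishing from $\calt$ to the specific summand $\oa$. One must be careful that $\oa$ really does lie in the thick subcategory generated by $\calt$ (it does, since $\calt$ generates $D^b_c(A)$) and that applying $\Rhom_A(-,\omega_A^{-r}\otimes_A\calt)$ — which sends $\calt$ to something with cohomology concentrated in degree $0$ — to a finite complex resolving $\oa$ by sums of summands of $\calt$ yields a complex whose cohomology is controlled; a spectral sequence or induction on the length of the resolution handles this, but it requires that the resolving objects be in $\add\calt$, not merely in the triangulated closure. An alternative that sidesteps this is to observe directly that $\calt$ itself is a sum of line bundles $\oa(E_k + \sum \tfrac{l_{ki}}{p_i}D_i)$ in many cases of interest and argue summand-by-summand; but for the general statement the thick-subcategory argument is the clean route, and getting the bookkeeping right there is the real work.
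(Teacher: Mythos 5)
There is a genuine gap, and it is exactly at the step you yourself flagged as ``the real work''. Your identification $\Ext^j_A(\oa,\oa(-r(K_X+\Delta)))=H^j(\ox(-r(K_X+\Delta)))$ via Proposition~\ref{plinebundles} is fine, but the transfer of the vanishing from $\calt$ to $\oa$ does not go through. Generation of $D^b_c(A)$ by $\calt$ only places $\oa$ in the thick subcategory generated by $\calt$, i.e.\ built from shifts of $\calt$ by cones and summands; orthogonality conditions such as $\Ext^{>0}_A(-,\omega_A^{-r}\otimes_A-)=0$ do not descend along cones (every object of $D^b_c(A)$ lies in this thick subcategory, and certainly not every object satisfies such vanishing). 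Even if you grant a finite exact resolution $0\to\calt_m\to\cdots\to\calt_0\to\oa\to 0$ with terms in $\add\calt$ (whose existence you have not justified — it does not follow formally from $\calt$ being a tilting bundle), the d\'evissage only works in the second variable: in the first variable the long exact sequence for $0\to K_0\to\calt_0\to\oa\to 0$ gives
$$\Hom_A(K_0,\oa(-r(K_X+\Delta)))\lm\Ext^1_A(\oa,\oa(-r(K_X+\Delta)))\lm\Ext^1_A(\calt_0,\oa(-r(K_X+\Delta)))$$
so $\Ext^1_A(\oa,\oa(-r(K_X+\Delta)))$ is only exhibited as a quotient of a $\Hom$ group that you have no reason to control; the induction collapses precisely at $j=1$.

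The paper avoids this entirely by never isolating $\oa$: it sets $E=\send_A\calt$, notes that by Proposition~\ref{pSerredual} and the local-global Ext spectral sequence
$$\Ext^j_A(\calt,\omega_A^{-r}\otimes_A\calt)=H^j\bigl(E\otimes_X\ox(-r(K_X+\Delta))\bigr),$$
which vanishes by Proposition~\ref{pnheredbundle} (and $\Hom_A(\calt,\omega_A^{r}\otimes_A\calt)=0$ for $r>0$ handles the last claim), and then uses that in characteristic zero the trace map $\tr\colon E\lm\ox$ splits the unit $\ox\hookrightarrow E$, so $\ox(-r(K_X+\Delta))$ is an $\ox$-module direct summand of $E\otimes_X\ox(-r(K_X+\Delta))$ and its cohomology vanishes too. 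That trace-splitting (or some substitute for it) is the missing idea in your argument: without it, the passage from cohomological information about $\send_A\calt$ to information about $\ox$ itself is not justified. If you replace your d\'evissage by this one-line splitting argument, the rest of your proposal (the use of Propositions~\ref{pSerredual}, \ref{pnheredbundle} and the integrality of $r(K_X+\Delta)$ for $r$ a common multiple of the $p_i$) is correct.
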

\begin{proof}
Let $\calt$ be an $n$-hereditary tilting bundle. Then the sheaf of algebras $E = \send_A \calt$ is an order on $X$, and the trace map $\tr: E \lm \ox$ splits since we are assuming our base field $k$ has characteristic zero. Furthermore, Proposition~\ref{pSerredual} and the local-global Ext-spectral sequence ensure 
$$\Ext^j_A(\calt, \omega_A^{-r} \otimes_A \calt) = \Ext^j_A(\calt, \ox(-r(K_X + \Delta))  \otimes_X \calt) = H^j (E \otimes_X \ox(r(K_X + \Delta)) ).$$
Proposition~\ref{pnheredbundle} now gives the desired vanishing of cohomology. 
\end{proof}

\section{Almost Fano weighted projective varieties}  \label{slog}  \mlabel{slog}

Our formula for the canonical bimodule of a GL-order and Corollary~\ref{cneccriterion} suggest an intimate connection between weighted projective varieties and log varieties as studied by algebraic geometers. In this section we recall some basic aspects of the log minimal model program and apply it to give evidence for Conjecture~\ref{cmain}. For the benefit of representation theorists, we include some basic definitions and refer them to \cite{KM} for further details. 

Let $X$ be a smooth projective variety of dimension $n>0$ and $D$ be a $\Q$-divisor on $X$. When given an expression such as $H^i(\ox(rD))$, we will say the integer $r$ is {\em appropriately divisible} if $rD$ is an integral divisor. We say that $D$ is {\em nef} if for every irreducible curve $C \subset X$ we have $D.C \geq 0$. We say $D$ is {\em big} if for appropriately divisible $r$, the function $\dim_k H^0(\ox(rD))$ grows as a polynomial of degree $n$. If $D$ is nef, this is equivalent to the fact that the intersection product $D^n >0$ \cite[Proposition~2.61]{KM}. Any ample divisor is big and nef, and in the case of surfaces, the Nakai-Moishezon criterion \cite[Theorem~V.1.10]{Har} asserts that $D$ is ample if and only if $D$ is big and $D.C >0$ for all irreducible curves $C$. 

Let $\left(X,\Delta = \sum_i(1 - \frac{1}{p_i}D_i)\right)$ be a weighted projective variety and $A$ be its associated standard GL-order. Prompted by Proposition~\ref{pSerredual}, we let
$$K_A = K_X + \Delta.$$

\begin{defn} \label{dfano}  \mlabel{dfano}
We say that $A$ or $(X,\Delta)$ is {\em Fano} if $-K_A$ is ample, and {\em almost Fano} if $-K_A$ is big and nef. 
\end{defn}

We say $A$ has {\em negative Kodaira dimension} if the log variety $(X,\Delta)$ has negative Kodaira dimension, that is, $H^0(\ox(rK_A)) = 0$ for any positive appropriately divisible integer $r$. Otherwise, it has {\em non-negative Kodaira dimension} . Corollary~\ref{cneccriterion} shows that any weighted projective variety with an $n$-hereditary tilting bundle has negative Kodaira dimension. 

Our first piece of evidence for our main Conjecture~\ref{cmain} is the following result. 

\begin{prop}  \label{pnec}  \mlabel{pnec}
Let $(X,\Delta)$ be a weighted projective surface. Suppose that 
\begin{itemize}
 \item $(X,\Delta)$ has negative Kodaira dimension and,
 \item $H^1(\ox(-r(K_X + \Delta))) = 0$ for all appropriately divisible non-negative integers $r$.
\end{itemize}
In particular, by Corollary~\ref{cneccriterion}, these two hypotheses hold whenever $(X,\Delta)$ has a 2-hereditary tilting bundle. 

Then $-(K_X + \Delta)$ is nef and $(K_X + \Delta)^2 \geq 0$. 
\end{prop}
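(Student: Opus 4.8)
The plan is to run a minimal model program (MMP) argument for the log surface $(X,\Delta)$ and use the two hypotheses to control what happens at each step. Write $K = K_X + \Delta$. Since $(X,\Delta)$ has negative Kodaira dimension, $K$ is not pseudo-effective, so $-K$ is not contained in the closure of the effective cone; by the cone theorem and the structure of the MMP for log surfaces with standard coefficients $1 - \tfrac{1}{p_i}$ (which are klt since $p_i \geq 2$), we may run a $K$-MMP and terminate either with a Mori fibre space $X' \to B$ (a $\PP^1$-bundle over a curve, or a point, i.e.\ a log del Pezzo) or at a surface where $K'$ is nef — but the latter is excluded because $\kappa = -\infty$ is preserved. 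So we end at a Mori fibre space. The key point to track is that each step of the MMP (contraction of a $K$-negative extremal curve $C$ with $C^2 < 0$) only improves the situation: the vanishing hypotheses are birational-ish in a way we can exploit, and nefness and non-negativity of the self-intersection, if they hold on the minimal model, pull back.

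More precisely, I would argue as follows. First reduce to showing the statement on a relatively minimal model obtained by contracting $(-1)$-curves $E$ with $K \cdot E < 0$ (equivalently $E$ a $K$-negative curve): if $\pi : X \to X'$ contracts such an $E$, then $K = \pi^* K' + aE$ with $a = -K\cdot E \cdot (\text{something}) > 0$ — more carefully, $K_X + \Delta = \pi^*(K_{X'} + \Delta') + aE$ with $a \geq 0$ by the klt discrepancy inequality, and $(\pi^* K')^2 = (K')^2$ while $(\pi^*K' + aE)^2 = (K')^2 - a^2 \leq (K')^2$; and nefness of $-K'$ together with $a \geq 0$ gives nefness of $-\pi^*K'$, but one must check the $aE$ term does not destroy nefness of $-K$ — this is where a little care is needed, since $-K \cdot E = a \cdot E^2$ could have the wrong sign. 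The honest route is: run the MMP, and observe that at each $K$-negative contraction the relevant cohomology vanishing $H^1(\ox(-rK)) = 0$ and $\kappa = -\infty$ descend to $X'$ (cohomology descends along birational contractions of rational surfaces by the projection formula and $R^1\pi_*\O = 0$). So it suffices to prove $-K'$ nef and $(K')^2 \geq 0$ on the end product.

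On the Mori fibre space, there are two cases. If $X'$ is a log del Pezzo, $-K'$ is ample, hence nef, and $(K')^2 > 0$ by Nakai–Moishezon, so we are done. If $X' \to B$ is a $\PP^1$-fibration over a curve $B$, then $-K'$ is ample on fibres; nefness on the base direction and the computation $(K')^2 \geq 0$ must be extracted from the hypothesis $H^1(\ox(-rK')) = 0$. Here I would use Riemann–Roch on the surface: $\chi(\ox(-rK')) = \chi(\O_{X'}) + \tfrac{1}{2}(-rK')\cdot(-rK' - K_{X'})$, and since $\kappa = -\infty$ forces $h^0(\ox(rK')) = 0$ hence by Serre duality $h^2(\ox(-rK' + \text{(integral part adjustment)})) $ is controlled, and $h^1 = 0$ by hypothesis, the Euler characteristic is $h^0(\ox(-rK'))\geq 0$ and grows like $\tfrac{r^2}{2}(K')^2 + O(r)$; non-negativity of this leading coefficient for all large appropriately divisible $r$ forces $(K')^2 \geq 0$. (One must be slightly careful because $K'$ is a $\Q$-divisor, so take $r$ a multiple of all $p_i$ throughout and round $\Delta'$ appropriately; the hypotheses are stated exactly for such $r$.) Finally, for nefness: if $-K'$ had a curve $C$ with $-K'\cdot C < 0$, i.e.\ $K' \cdot C > 0$, then since on a Mori fibre space $\rho = 1$ or $2$ and the only $K'$-positive-ish behaviour is along a section or multisection, one derives that $h^0(\ox(-rK'))$ cannot grow quadratically, contradicting $(K')^2 \geq 0$ together with bigness considerations — alternatively, $-K'\cdot C \geq 0$ follows directly on a $\PP^1$-bundle by computing against the two generators of the nef cone.

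The main obstacle I expect is the bookkeeping around the $\Q$-divisor $\Delta$ and the fractional coefficients: ensuring that the MMP steps preserve the precise cohomological hypotheses (not just up to bounded error), that discrepancies have the right sign so that $(K')^2 \geq (K)^2$-type inequalities go the correct way, and that the Riemann–Roch leading-term argument is applied to genuinely integral divisors $-rK$. The geometric skeleton — ``reduce to a Mori fibre space, then it is either log del Pezzo (done) or a conic bundle (use vanishing + Riemann–Roch to get $(K')^2 \geq 0$ and nefness)'' — is robust; the delicate part is verifying the hypotheses propagate through the birational contractions, which is exactly where the assumption that $X$ is a rational surface (forced by $\kappa = -\infty$ and the existence of the tilting bundle / the structure of log del Pezzos) makes $R^1\pi_* \O = 0$ available and saves the argument.
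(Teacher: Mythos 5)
There is a genuine gap, and it is exactly at the point you flag and then wave past: the reduction ``it suffices to prove $-K'$ nef and $(K')^2 \geq 0$ on the end product of the MMP'' is not valid, because neither conclusion transfers back from the model to $X$. If $\pi : X \to X'$ contracts a $(K_X+\Delta)$-negative curve $E$ with $E^2<0$, then $K_X+\Delta = \pi^*(K_{X'}+\Delta') + aE$ with $a = (K_X+\Delta)\cdot E / E^2 > 0$, so $(K_X+\Delta)^2 = (K_{X'}+\Delta')^2 + a^2E^2 < (K_{X'}+\Delta')^2$: the inequality you need goes the wrong way, and likewise $-(K_X+\Delta)\cdot C = -(K_{X'}+\Delta')\cdot \pi_*C - aE\cdot C$ can be negative for curves $C$ meeting $E$, so nefness of $-(K_{X'}+\Delta')$ does not ascend. (Blowing up $\PP^2$ at ten general points illustrates the phenomenon: the minimal model $\PP^2$ has $-K$ ample, yet $-K_X$ is not nef upstairs.) Descending the hypotheses to $X'$ and proving the statement there therefore loses precisely the information the proposition is about; to conclude anything on $X$ you must use the vanishing hypotheses on $X$ itself, and your outline never returns to $X$ to do so. The statement for the Mori fibre space is essentially trivial (log del Pezzo or conic bundle), so as written the proposal proves only the easy endpoint case, not the proposition.

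For comparison, the paper's proof stays on $X$ throughout and needs much less of the MMP. The inequality $(K_X+\Delta)^2 \geq 0$ follows directly from Riemann--Roch on $X$: if $(K_X+\Delta)^2<0$ then $\chi(\ox(-r(K_X+\Delta)))\to-\infty$, contradicting $h^1=0$ (which forces $\chi = h^0+h^2 \geq 0$). For nefness, one assumes $(K_X+\Delta)\cdot C>0$ for some irreducible $C$, restricts $-r(K_X+\Delta)$ to $C$ to get $H^1(\oc(-r(K_X+\Delta)))\neq 0$ for large divisible $r$, and kills the obstruction term $H^2(\ox(-r(K_X+\Delta)-C)) \cong H^0(\ox(K_X+r(K_X+\Delta)+C))^*$ using only the ``easy dichotomy theorem'' for the klt pair $(X,\Delta)$: through a general point of $X$ there passes a $(K_X+\Delta)$-negative curve $H$, all such curves are numerically proportional with $H^2\geq 0$, and intersecting with $H$ forces the $H^0$ to vanish for $r\gg 0$, contradicting $H^1(\ox(-r(K_X+\Delta)))=0$. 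If you want to salvage your approach, you would have to convert it into an argument of this kind on $X$; running the MMP to its end and arguing on the fibre space cannot by itself yield the conclusion on $X$.
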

\begin{proof}
If $(K_X + \Delta)^2 <0$, then by Riemann-Roch, we would have $\chi(\ox(-r(K_X + \Delta))) \lm -\infty$ for appropriately divisible $r$ tending towards $\infty$. This contradicts $H^1(\ox(-r(K_X + \Delta))) = 0$ so we may assume $(K_X + \Delta)^2 \geq 0$. 

Suppose that $-(K_X + \Delta)$ is not nef, so there is an irreducible curve $C$ such that $(K_X + \Delta). C >0$. From the exact sequence of sheaves
$$ 0 \lm \ox(-r(K_X + \Delta) -C) \lm \ox(-r(K_X + \Delta)) \lm \oc(-r(K_X + \Delta)) \lm 0 $$
we obtain an exact sequence in cohomology
$$ H^1(\ox(-r(K_X + \Delta))) \lm H^1(\oc(-r(K_X + \Delta))) \lm H^2(\ox(-r(K_X + \Delta) -C)).$$
Now $\deg (-r(K_X + \Delta)|_C) \lm - \infty$ so for sufficiently large and divisible $r$ we have $H^1(\oc(-r(K_X + \Delta))) \neq 0$. It suffices to show that the right hand term is zero to arrive at a contradiction. Serre duality gives
$$ \dim H^2(\ox(-r(K_X + \Delta) -C))  = \dim H^0(\ox(K_X + r(K_X + \Delta) +C)).$$
Now note that our simple normal crossing assumption on $\Delta$ ensures that $(X,\Delta)$ is klt \cite[Corollary~2.31(3)]{KM}, so we may apply the results of the log minimal model program to it. The easy dichotomy theorem \cite[Theorem~1-3-9 and p. 126]{Mat} shows that there is an open subset $U$ of $X$ such that for any point $p \in U$, there is an irreducible curve $H$ passing through $p$ such that $(K_X + \Delta).H <0$. Furthermore, any two such curves (possibly passing through different points) are numerically proportional to each other so $H^2\geq 0$. If we choose $r$ sufficiently large and divisible so that $(K_X + r(K_X + \Delta) +C).H <0$, then we must have $H^0(\ox(K_X + r(K_X + \Delta) +C))=0$ as desired. 
\end{proof}
\vs

We have some more restrictions on when 2-hereditary tilting bundles exist. 

\begin{prop} \label{pcentres}  \mlabel{pcentres}
 Let $(X,\Delta)$ be a weighted projective surface of negative Kodaira dimension. Suppose that $(X,\Delta)$ has a tilting bundle. Then $X$ is a rational surface. 
\end{prop}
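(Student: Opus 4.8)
The plan is to deduce rationality of $X$ from Castelnuovo's criterion, by establishing separately that $\kappa(X)=-\infty$ and that the irregularity $q(X)=h^1(\ox)$ vanishes. The first point is elementary: each weight $1-\tfrac1{p_i}$ is positive, so $\Delta$ is an effective $\Q$-divisor, and hence for every $r$ with $r\Delta$ integral, multiplication by the canonical section of $\ox(r\Delta)$ gives an inclusion $H^0(\ox(rK_X))\hookrightarrow H^0(\ox(r(K_X+\Delta)))$. If $\kappa(X)\ge 0$, then $h^0(mK_X)>0$ for some $m>0$, hence for all sufficiently divisible positive $m$, in particular for some positive multiple $r$ of $\operatorname{lcm}(p_i)$; then $H^0(\ox(r(K_X+\Delta)))\ne 0$, contradicting the hypothesis that $(X,\Delta)$ has negative Kodaira dimension. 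So $\kappa(X)=-\infty$, in particular $P_2(X)=h^0(2K_X)=0$, and by Castelnuovo's rationality criterion it remains only to prove $q(X)=0$. (Equivalently: $\kappa(X)=-\infty$ already forces $X$ to be rational or ruled over a curve of genus $\ge 1$, and the latter case would give $q(X)\ge 1$.)

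To obtain $q(X)=0$ I would use Hochschild homology as a derived invariant. Exactly as in the proof of Proposition~\ref{pnheredbundle}, a tilting bundle $\calt$ gives a derived equivalence $\Rhom_A(\calt,-)\colon D^b_c(A)\iso D^b_{fg}(\Lambda)$ with $\Lambda=\End_A\calt$ a finite-dimensional $k$-algebra; and since each stalk $A_x$ has finite global dimension (Proposition~\ref{pGLorders}), $D^b_c(A)=\operatorname{perf}(A)$, so the Hochschild homology of $D^b_c(A)$ (via its canonical dg enhancement) is the Hochschild homology $\mathrm{HH}_\ast(A)$ of the sheaf of algebras $A$. As $\mathrm{HH}_i(\Lambda)=0$ for all $i<0$ and Hochschild homology is invariant under derived equivalence, we get $\mathrm{HH}_i(A)=0$ for $i<0$. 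Now $\mathrm{HH}_\ast(A)$ is computed by the local-to-global spectral sequence $E_2^{p,q}=H^p(X,\mathcal{HH}_q(A))\Rightarrow\mathrm{HH}_{q-p}(A)$, where $\mathcal{HH}_q(A)$ denotes sheaf Hochschild homology: zero for $q<0$, and equal to $A/[A,A]$ for $q=0$. Since $X$ is a surface, $E_2^{1,0}=H^1(X,A/[A,A])$ carries no nonzero differential (the relevant sources and targets are either $H^p(X,-)$ with $p>2$ of a coherent sheaf, or involve $\mathcal{HH}_{<0}(A)=0$), so it is a subquotient of $\mathrm{HH}_{-1}(A)$. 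Finally, the trace $\tr\colon A\to\ox$ annihilates commutators, hence descends to $A/[A,A]\to\ox$, and its composite with the unit map $\ox\to A\to A/[A,A]$ is multiplication by $\tr(1_A)\in k^\times$; in characteristic zero this exhibits $\ox$ as an $\ox$-module direct summand of $A/[A,A]$. Combining, $q(X)=h^1(\ox)\le\dim H^1(X,A/[A,A])\le\dim\mathrm{HH}_{-1}(A)=0$, which finishes the proof.

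The crux — and the only step that is not formal — is the noncommutative input: identifying the Hochschild homology of $D^b_c(A)$ with that of the sheaf of algebras $A$, checking that it really is a derived invariant (one needs a functorial dg enhancement, available because a tilting object is present), and computing enough of the local Hochschild homology of the GL-order to run the spectral sequence — above all the identification $\mathcal{HH}_0(A)=A/[A,A]$ together with its trace retraction onto $\ox$. An alternative would be to establish an HKR-type decomposition for GL-orders and read off $q(X)=0$ (indeed also $p_g(X)=0$) directly, but the route above is the most self-contained.
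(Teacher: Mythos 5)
Your argument is correct in outline, but it takes a genuinely different --- and much heavier --- route than the paper. The paper also begins by observing that effectivity of $\Delta$ forces $\kappa(X)=-\infty$, so $X$ is birationally ruled over a smooth curve $C$; but instead of Hochschild homology it uses the Grothendieck group: the exact adjoint pair $\oa\otimes_X -$ and $\shom_A(\oa,-)$ gives a split injection $K_0(X)\hookrightarrow K_0(A)$ (because $\shom_A(\oa,\oa\otimes_X\mathcal{V})\simeq\mathcal{V}$), the tilting bundle makes $K_0(A)\simeq K_0(\End\calt)$ finitely generated, and since $\Pic C$ embeds in $K_0(C)$, which embeds in $K_0(X)$, this forces $C$ (hence $X$) to be rational, as $\Pic C$ is not finitely generated when $g(C)\geq 1$. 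So both proofs transport an additive/derived invariant of the finite-dimensional algebra back to $X$ to kill the irrationally ruled case: your invariant is $\mathrm{HH}_{<0}$ (yielding $q(X)=0$ and then Castelnuovo or the ruled-surface dichotomy), the paper's is finite generation of $K_0$, for which the ``local-to-global'' step is the trivial split injection above rather than a comparison theorem. The one place where your write-up outsources genuine work --- and you rightly flag it as the crux --- is the identification of $\mathrm{HH}_*$ of the dg-enhanced category $\operatorname{perf}(A)=D^b_c(A)$ with the hypercohomology of the sheafified Hochschild complex of the order $A$, i.e.\ the existence, convergence and derived invariance behind your spectral sequence $H^p(X,\mathcal{HH}_q(A))\Rightarrow \mathrm{HH}_{q-p}$; this is available in the literature (Keller's cyclic homology of ringed spaces and exact categories, or Zariski descent together with \'etale base change for Hochschild homology of algebras), and your remaining steps --- vanishing of $\mathrm{HH}_{<0}(\Lambda)$, the degeneration argument at $E_2^{1,0}$ on a surface, $\mathcal{HH}_0(A)=A/[A,A]$, and the trace retraction onto $\ox$ in characteristic zero (which plays exactly the role of the paper's splitting of $\pi^*$ on $K_0$) --- are all correct, so your proof goes through, just at a considerably higher technological cost than the paper's elementary $K_0$ argument; in exchange it shows slightly more, namely $q(X)=0$ for any weighted projective surface with a tilting bundle, independent of the Kodaira dimension hypothesis.
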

\begin{proof}
 Since $\Delta$ is an effective divisor, $X$ also has negative Kodaira dimension so is birationally ruled over a smooth curve, say $C$. Let $A$ be the standard GL-order associated to $(X,\Delta)$. Recall that $\oa$ is a locally projective $A$-module so we have a pair of exact adjoint functors
$$ \oa \otimes_X -: \Coh (X) \lm A-\mo , \quad \shom_A(\oa, -): A-\mo \lm \Coh(X).$$
They induce maps on Grothendieck groups, say $\pi^*: K_0(X) \lm K_0(A), \pi_*: K_0(A) \lm K_0(X)$. For any vector bundle $\mathcal{V}$ on $X$, there is a natural isomorphism $\shom_A(\oa,\oa \otimes_X \mathcal{V}) \simeq \mathcal{V}$ so $\pi^*$ is a split injection. Since $A$ has a tilting bundle, $K_0(A)$ is finitely generated so the same is true of $K_0(X)$. But $\text{Pic}\, C$ embeds in $K_0(C)$ by \cite[Exercise~II.6.11]{Har} which in turn embeds in $K_0(X)$ \cite[end of Example~15.1.1]{Ful} so $C$ must be rational as must be $X$. 
\end{proof}\vs

It seems desirable to have a classification of almost Fano weighted projective surfaces, as well as those which satisfy more generally, the hypotheses of Proposition~\ref{pnec}. Almost Fano surfaces were studied classically by Demazure \cite{Dem} (see also \cite{HW}). Most are obtained by blowing up $\PP^2$, and the key question is which points can be blown up to ensure $-K_X$ stays big and nef. To this end, Demazure introduced the notion of  {\em points in (almost) general position}, a notion we need to generalise. 

Consider $s\leq 9$ points $q_1,\ldots,q_s$ ``in'' $\PP^2$. We will allow infinitely near points, by which we mean in reality, there is a sequence of blowups
\begin{equation} \label{eblowup} 
X = X_s \xrightarrow{f_s} X_{s-2} \lm \ldots \lm X_1 \xrightarrow{f_1}  X_0 :=\PP^2 
\end{equation}
and $q_i \in X_{i-1}$. Note that $K_X^2 =9 - s$, so when $s=9$, $X$ is not almost Fano although it may satisfy the hypotheses of Proposition~\ref{pnec}.

\begin{defn}  \label{dgenpos}  \mlabel{dgenpos}(\cite[II Definition~2.1]{Dem}, \cite[Definition~3.2]{HW})
Suppose that $s <9$. The points $q_1,\ldots,q_s$ are in {\em general (resp. almost general) position} if the following conditions all hold:
\begin{enumerate}
 \item no 3 (resp. 4) points lie on a line,
 \item no 6 (resp. 7) points lie on a conic,
 \item there are no infinitely near points (resp. no $q_i$ lies on a (-2)-curve) and,
 \item no singular cubic passes through 8 points in such a way that the singularity is one of those points (resp. no extra condition on 8 points). 
\end{enumerate}
\end{defn}

From \cite[II Th\'{e}or\`{e}me~1]{Dem}, we know that the only (smooth) Fano surfaces (also called {\em del Pezzo surfaces} in the literature) are $\PP^1 \times \PP^1$ and the blowups of $\PP^2$ at up to 8 points in general position. If one blows up 3 points which lie on a line $E$ in $\PP^2$, then the strict transform $\hat{E}$ of $E$ in the blowup $X$ will be a (-2)-curve so $-K_X.\hat{E} = 0$ and $X$ is not Fano. However, it is almost Fano. Blowing up 4 points on a line yields a (-3)-curve, so the resulting blowup no longer has $-K_X$ nef. Demazure showed similar phenomena occur when one violates the conditions in ii),iii) and iv) above. 

We now consider the case $s=9$ and examine the hypotheses of Proposition~\ref{pnec}. We may as well assume that any 8 are in almost general position, otherwise $-K_X$ is not nef. 

\begin{lemma}  \label{ldem}  \mlabel{ldem}
 Let $f: X \lm \PP^2$ be the blowup of $\PP^2$ at 9 points $q_1,\ldots,q_9$ (some possibly infinitely near). Then there exists an {\em anti-canonical curve} $C_X$, (that is, $C_X \in |-K_X|$) and $H^1(\ox(-rK_X)) = 0$ for all $r \geq 0$ if and only if $H^0(\calo_{C_X}(rK_X)) = 0$ for all $r > 0$. 
\end{lemma}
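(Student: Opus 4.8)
The plan is to first establish existence of an anti-canonical curve $C_X \in |-K_X|$ on the blowup $X$. Since $f: X \to \PP^2$ is a blowup at 9 points, we have $-K_X = f^*(-K_{\PP^2}) - \sum E_i = f^*\mathcal{O}(3) - \sum E_i$, and the linear system $|{-}K_X|$ corresponds to plane cubics passing through the 9 (possibly infinitely near) points $q_1,\dots,q_9$. The space of plane cubics is $10$-dimensional, so imposing $9$ (infinitely near) point conditions leaves a system of projective dimension at least $0$; hence $|{-}K_X| \neq \emptyset$ and we may fix $C_X \in |{-}K_X|$. (One should note $C_X$ is an effective divisor with $C_X^2 = K_X^2 = 0$, and under our standing assumption that any $8$ of the $q_i$ are in almost general position, $C_X$ is a genuine curve; I would remark on this but not belabor it.)

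Next I would run the standard cohomology-of-a-twist argument along $C_X$. For each $r \geq 0$ consider the short exact sequence obtained by twisting the structure sequence of $C_X \subset X$ by $\ox(-(r+1)K_X)$:
$$ 0 \lm \ox(-(r+1)K_X - C_X) \lm \ox(-(r+1)K_X) \lm \calo_{C_X}(-(r+1)K_X) \lm 0. $$
Since $C_X \in |-K_X|$, we have $\ox(-(r+1)K_X - C_X) \cong \ox(-rK_X)$, so the sequence reads
$$ 0 \lm \ox(-rK_X) \lm \ox(-(r+1)K_X) \lm \calo_{C_X}(-(r+1)K_X) \lm 0. $$
Taking cohomology gives, for each $r \geq 0$,
$$ H^1(\ox(-rK_X)) \lm H^1(\ox(-(r+1)K_X)) \lm H^1(\calo_{C_X}(-(r+1)K_X)) \lm H^2(\ox(-rK_X)). $$
The plan is to control the two outer terms: by Serre duality $H^2(\ox(-rK_X)) = H^0(\ox((r+1)K_X))^\vee$, which vanishes for $r \geq 0$ because $X$ has (at worst) Kodaira dimension zero here — indeed for $r \geq 1$ an effective $(r+1)$-canonical divisor would force $K_X$ itself to be torsion or worse, contradicting $-K_X$ being effective and nonzero; for $r = 0$ it is $H^0(\omega_X) = 0$ since $X$ is rational. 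And $H^1(\calo_{C_X}(-(r+1)K_X))$ is Serre-dual on the Gorenstein curve $C_X$ (which has $\omega_{C_X} = \calo_{C_X}(K_X + C_X) = \calo_{C_X}$ since $C_X \in |-K_X|$) to $H^0(\calo_{C_X}((r+1)K_X))$. Thus the long exact sequence becomes
$$ H^1(\ox(-rK_X)) \lm H^1(\ox(-(r+1)K_X)) \lm H^0(\calo_{C_X}((r+1)K_X))^\vee \lm 0. $$

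From here both implications follow by an induction on $r$. Starting point: $H^1(\ox) = 0$ since $X$ is rational (Proposition~\ref{pcentres} territory, but really just $X$ rational $\Rightarrow H^1(\mathcal{O}_X)=0$). For the forward direction, if $H^1(\ox(-rK_X)) = 0$ for all $r \geq 0$, then the surjection above forces $H^0(\calo_{C_X}((r+1)K_X)) = 0$ for all $r \geq 0$, i.e. $H^0(\calo_{C_X}(sK_X)) = 0$ for all $s > 0$. For the converse, if $H^0(\calo_{C_X}(sK_X)) = 0$ for all $s > 0$, then inductively $H^1(\ox(-rK_X)) = 0 \Rightarrow H^1(\ox(-(r+1)K_X)) = 0$, with base case $r=0$, giving the vanishing for all $r \geq 0$. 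I expect the main obstacle to be the bookkeeping around $C_X$ possibly being non-reduced or reducible: one must make sure $C_X$ is Gorenstein with trivial dualizing sheaf (which holds since $C_X \in |-K_X|$ is an anticanonical divisor on a smooth surface, hence a local complete intersection with $\omega_{C_X} \cong \calo_{C_X}$ by adjunction) so that Serre duality on $C_X$ applies, and to justify the vanishing $H^2(\ox(-rK_X)) = 0$ cleanly — for $r \geq 1$ this is where the hypothesis "any 8 of the points in almost general position" is implicitly used to rule out $X$ having a multiple anticanonical section, though in fact $H^0(\ox(mK_X)) = 0$ for $m \geq 1$ holds for any rational surface, so this step is harmless.
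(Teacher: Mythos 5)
Your proof is correct and follows essentially the same route as the paper: twist the structure sequence of $C_X \in |-K_X|$ by $\ox(-rK_X)$, kill the $H^2$ term by Serre duality and negative Kodaira dimension of the rational surface $X$, use $\omega_{C_X} \simeq \calo_{C_X}$ to dualise $H^1(\calo_{C_X}(-rK_X))$ into $H^0(\calo_{C_X}(rK_X))^\vee$, and induct from $H^1(\ox) = 0$ in both directions. The only cosmetic difference is the existence of $C_X$: you count conditions imposed by the nine (possibly infinitely near) points on the $10$-dimensional space of cubics, while the paper gets $\chi(\ox(-K_X)) = 1$ from Riemann--Roch and concludes via Serre duality; both are standard and fine.
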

\begin{proof}

The existence of an anti-canonical divisor is well-known and can be deduced as follows. Note that $R^if_*(\ox) = 0$ for $i=1,2$ so by Riemann-Roch we have $\chi(\ox(-K_X)) = \chi(\ox) = \chi(\calo_{\PP^2}) = 1$. Serre duality ensures then that $H^0(\ox(-K_X)) \neq 0$ and we may find a divisor $C_X \in |-K_X|$. 

Now $X$ has negative Kodaira dimension, so Serre duality gives the following exact sequence
$$ H^1(\ox(-(r-1)K_X)) \lm H^1(\ox(-rK_X)) \lm H^1(\calo_{C_X}(-rK_X)) \lm 0.$$
Now $H^1(\ox) = H^1(\calo_{\PP^2}) = 0$ so by induction we see that $H^1(\ox(-rK_X)) = 0$ for all $r \geq 0$ if and only if $H^1(\calo_{C_X}(-rK_X)) = 0$ for all $r > 0$. Now $C_X$ is anti-canonical so $\omega_{C_X} \simeq \calo_{C_X}$ and Serre duality shows that $H^1(\calo_{C_X}(-rK_X)) = H^0(\calo_{C_X}(rK_X))^*$. 
\end{proof}\vs

We wish now to re-write the condition $H^0(\calo_{C_X}(rK_X)) = 0$ for all $r > 0$ in terms of a cubic $C$ passing through the 9 points. Note $C$ exists since $H^0(\calo_{\PP^2}(3)) = 10$, and it is furthermore anti-canonical. From this, we can construct an anti-canonical curve $C_X$ on $X$ with $f_*C_X = C$ as follows. We use the sequence of blowups \eqref{eblowup}. Let $C_0=C$ and suppose we have constructed inductively, an anti-canonical curve $C_{i-1} \subset X_{i-1}$. Let $f_{i*}^{-1}$ denote the strict transform, $E_i$ the exceptional curve of $f_i$ and $m_i$ the multiplicity of $C_{i-1}$ at $q_i$. Since $C$ passes through the $q_i$, we know that $m_i \geq 1$. The adjunction formula now ensures that 
$$ C_i := f_{i*}^{-1} C_{i-1} + (m_i -1) E_i$$
is anti-canonical and $C_X=C_9$ is the desired curve.  

Recall that an effective divisor $D$ on $X$ is said to be {\em numerically 1-connected} if for every {\em effective} decomposition $D = D_1 + D_2$ (that is, with $D_1, D_2$ effective divisors), we have $D_1. D_2 \geq 1$. 

\begin{lemma}  \label{lnumconn}  \mlabel{lnumconn}
The anti-canonical curve $C_X$ is numerically 1-connected. 
\end{lemma}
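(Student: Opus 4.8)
The plan is to argue by contradiction. Suppose $C_X = D_1 + D_2$ with $D_1, D_2$ nonzero effective divisors and $D_1\cdot D_2 \le 0$, and derive a contradiction. Recall that in the case at hand $-K_X$ is nef (we are assuming the nine points are such that any $8$ lie in almost general position, cf. the discussion before Lemma~\ref{ldem}) and $(-K_X)^2 = K_X^2 = 9 - 9 = 0$. Since $C_X \in |-K_X|$ and the $D_i$ are effective, nefness gives $-K_X\cdot D_1 \ge 0$ and $-K_X\cdot D_2 \ge 0$, and these add to $-K_X\cdot C_X = (-K_X)^2 = 0$; hence $-K_X\cdot D_1 = -K_X\cdot D_2 = 0$. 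Intersecting the relation $-K_X \equiv D_1 + D_2$ with $D_1$ gives $D_1^2 + D_1\cdot D_2 = 0$, so $D_1^2 = -D_1\cdot D_2 \ge 0$, and symmetrically $D_2^2 = -D_1\cdot D_2 \ge 0$.

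The crux — and the step I expect to be the main obstacle — is the reverse inequality $D_1^2 \le 0$, which then forces $D_1^2 = D_2^2 = D_1\cdot D_2 = 0$. Here the Hodge index theorem is not directly applicable, because $(-K_X)^2 = 0$ rather than being strictly positive and $D_1$ is orthogonal to no ample class. The way around this is to perturb: fix an ample divisor $H$; then $-K_X + tH$ is ample for every rational $t > 0$, and the Hodge index inequality yields
\[ D_1^2 \;\le\; \frac{\bigl(D_1\cdot(-K_X + tH)\bigr)^2}{(-K_X + tH)^2} \;=\; \frac{t^2(D_1\cdot H)^2}{2t\,(-K_X\cdot H) + t^2 H^2}, \]
and since $-K_X$ is nef and not numerically trivial we have $-K_X\cdot H > 0$, so the right-hand side tends to $0$ as $t \to 0^+$; an integer bounded above by a quantity that can be made $< 1$ must be $\le 0$. (Equivalently, one may quote the standard fact that the intersection form of a Lorentzian lattice is negative semidefinite on the orthogonal complement of an isotropic class lying in the closure of the positive cone, with radical spanned by that class.)

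With $D_1^2 = (-K_X)^2 = D_1\cdot(-K_X) = 0$ in hand, the classes $D_1$ and $-K_X$ span a subspace of $\Pic X \otimes \mathbb{R}$ on which the intersection form vanishes identically; but by the Hodge index theorem this form has signature $(1, \rho(X) - 1) = (1, 9)$, so its totally isotropic subspaces are at most one-dimensional. Hence $D_1 \equiv \lambda(-K_X)$ for some rational $\lambda$, and pairing with $H$ together with $D_2 = C_X - D_1 \equiv (1-\lambda)(-K_X)$ forces $0 < \lambda < 1$. On the other hand, $-K_X$ is a primitive element of $\Pic X$ (in the standard basis coming from the blow-ups \eqref{eblowup} it is $3\ell - E_1 - \cdots - E_9$), and $\Pic X$ is torsion-free, so $D_1$ can only be an integral multiple of $-K_X$ — contradicting $0 < \lambda < 1$. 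This contradiction shows $D_1\cdot D_2 \ge 1$, i.e.\ $C_X$ is numerically $1$-connected. Apart from the degenerate Hodge-index estimate highlighted above, the only point needing care is tracking where nefness of $-K_X$ is used, which rests on the almost-general-position hypothesis.
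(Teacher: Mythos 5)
Your proposal has a genuine gap at its very first step: the nefness of $-K_X$ is not available under the hypotheses of the lemma. The discussion before Lemma~\ref{ldem} only asserts that if some $8$ of the points fail to be in almost general position then $-K_X$ is not nef; it does not assert (and it would be false to assert) the converse. Indeed, ``any $8$ in almost general position'' does not force $-K_X$ to be nef: let $C$ be an irreducible nodal cubic, take $q_1$ to be its node and $q_2,\dots,q_9$ generic smooth points of $C$. Every $8$-point subset is then in almost general position, because the configuration ``singular cubic through $8$ points with the singularity at one of them'' is excluded only in the definition of general position, not of almost general position (Definition~\ref{dgenpos}, condition iv)). Yet the strict transform $\hat C$ of $C$ has class $3H-2E_1-E_2-\cdots-E_9$ (with $H$ the pullback of a line), so $-K_X\cdot \hat C = 9-2-8 = -1 < 0$ and $-K_X$ is not nef. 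Note that in this example the lemma is still true: here $C_X=\hat C + E_1$ and $\hat C\cdot E_1 = 2$, so $C_X$ is numerically $1$-connected even though the key positivity input of your argument fails. Thus your proof establishes only the special case in which $-K_X$ is nef, which is strictly weaker than the statement as proved and used in the paper.

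Granting nefness, the rest of your argument is correct and rather slick: $-K_X\cdot D_i=0$, the perturbed Hodge-index estimate forcing $D_1^2\le 0$, the fact that totally isotropic subspaces are at most one-dimensional in signature $(1,9)$, and the primitivity of the anticanonical class in $\Pic X$ do yield $D_1\equiv\lambda(-K_X)$ with $\lambda\in\Z\cap(0,1)$, a contradiction; in fact this proves numerical $1$-connectedness of every effective decomposition of the anticanonical class whenever $-K_X$ is nef. But the paper's proof needs no positivity at all: it pushes an effective decomposition of $C_i$ down the tower \eqref{eblowup}, writing it as $(f_i^*D+lE_i)+(f_i^*D'-(l-1)E_i)$ with $C_{i-1}=D+D'$ effective, and uses $D\cdot D'+l(l-1)\ge D\cdot D'\ge 1$, the base case being the plane cubic $C_0$. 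To salvage your approach you would either have to restrict the lemma to situations where nefness has already been secured (for instance when $\deg\calo_{C_X}(K_X)=0$, which does imply $-K_X$ nef because any irreducible curve negative against $-K_X$ must be a component of $C_X$), or supply an argument like the paper's in the non-nef case; as written, the appeal to ``any $8$ in almost general position'' does not justify the nefness you rely on.
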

\begin{proof}
We prove by induction that $C_i$ is numerically 1-connected. A simple case by case check shows that $C_0$ is numerically 1-connected. (Alternatively, note that $C_0$ is big and nef so we may invoke \cite[Lemma~3.11(i)]{Reid}). Any effective decomposition of $C_i$ has the form $C_i = (f^*_i D + lE_i) + (f^*_i D' - (l-1) E_i)$ where $C_{i-1} = D + D'$ is an effective decomposition and $l \in \Z$. Now
$$(f^*_i D + lE_i).(f^*_i D' - (l-1) E_i) = D.D' +l(l-1) \geq D.D' \geq 1 $$
so we are done. 
\end{proof}
\vs

Note that $C_X$ has arithmetic genus 1, and in fact $\omega_{C_X} \simeq \calo_{C_X}$ since it is anti-canonical. Furthermore, the lemma shows that $H^0(\calo_{C_X}) = k$ by \cite[Lemma~3.11(ii)]{Reid} or \cite[Corollary~12.3]{BHPV}. Let $\Gamma$ be the set of irreducible components of $C_X$ and recall there is a {\em degree} map 
$$\deg: \Pic C_X \lm \Z^{\Gamma}: \call \mapsto (\deg \call|_{D})_{D \in \Gamma}.$$ 
We let $\Pic^0 C_X$ denote the kernel  of this map as usual. 
\begin{lemma}  \label{lKdeg0}  \mlabel{lKdeg0}
\begin{enumerate}
\item If $H^0(\calo_{C_X}(rK_X)) = 0$ for all $r >0$, then $\deg \calo_{C_X}(K_X) = 0$. 
\item  $\deg \calo_{C_X}(K_X)=0$ if and only if all components except possibly one of $C_X$ is a (-2)-curve.
\item If $\deg \calo_{C_X}(K_X) = 0$ then $H^0(\calo_{C_X}(rK_X)) \neq 0$ if and only if $\calo_{C_X}(rK_X) \simeq \calo_{C_X}$
\end{enumerate}
In particular, $H^0(\calo_{C_X}(rK_X)) = 0$ for all $r >0$ if and only if $\calo_{C_X}(K_X)$ is non-torsion in $\Pic^0 C_X$. 
\end{lemma}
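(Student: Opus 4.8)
The plan is to treat the three items in turn --- (ii) and (iii) are short and (i) carries the real content --- and then read off the final equivalence. All three use that $C_X\in|-K_X|$, that $K_X^2=9-9=0$ so $K_X\cdot C_X=-K_X^2=0$, and that $\omega_{C_X}\simeq\calo_{C_X}$ with $p_a(C_X)=1$; writing $C_X=\sum_i n_iD_i$ for its decomposition into irreducible components with multiplicities, one has $\deg\calo_{C_X}(K_X)=(K_X\cdot D_i)_i$ and $\sum_i n_i(K_X\cdot D_i)=K_X\cdot C_X=0$. For (ii), if $\deg\calo_{C_X}(K_X)=0$ then intersecting $C_X$ with $D_i$ gives $0=-K_X\cdot D_i=n_iD_i^2+\sum_{j\ne i}n_j(D_i\cdot D_j)$, so $D_i^2\le 0$ and, by adjunction, $p_a(D_i)\in\{0,1\}$; if some $p_a(D_i)=1$ the identity forces $D_i\cdot D_j=0$ for all $j\ne i$, hence by connectedness $C_X=n_iD_i$ is irreducible and the claim is vacuous, and otherwise every $D_i$ has $p_a(D_i)=0$ and $D_i^2=2p_a(D_i)-2-K_X\cdot D_i=-2$, a $(-2)$-curve. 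Conversely, if all but at most one component $D_{i_0}$ are $(-2)$-curves then $K_X\cdot D_i=0$ for $i\ne i_0$, and $0=K_X\cdot C_X=n_{i_0}(K_X\cdot D_{i_0})$ forces $K_X\cdot D_{i_0}=0$ as well.

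For (iii), $(\Leftarrow)$ is trivial. When $\deg\calo_{C_X}(K_X)=0$, each $\calo_{C_X}(rK_X)$ has multidegree $0$, hence $\chi(\calo_{C_X}(rK_X))=\chi(\calo_{C_X})=1-p_a(C_X)=0$ and $h^0=h^1$; Serre duality on the Gorenstein curve $C_X$, using $\omega_{C_X}\simeq\calo_{C_X}$, then gives $h^1(\calo_{C_X}(rK_X))=h^0(\calo_{C_X}(-rK_X))$. So it suffices to prove that any line bundle $L$ of multidegree $0$ on $C_X$ with $H^0(L)\ne 0$ is trivial. A nonzero $s\in H^0(L)$ restricts on each component to $0$ or to a nowhere-vanishing section (a nonzero section of a degree-$0$ bundle on an integral curve has no zeros). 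If $s|_{D_i}\ne 0$ for every $i$, then $s$ is nonzero at every closed point and trivialises $L$; otherwise write $C_X=A+B$, with $B\ne 0$ the union of the components on which $s$ vanishes and $A$ the rest --- here I use that $C_X$ is reduced, which holds because almost general position of the $8$-element subsets forbids a non-reduced cubic, or three concurrent lines, through the nine points. Then $s$ lies in $H^0(\calo_A(L-B))\subset H^0(\calo_{C_X}(L))$, and restricting to a component $D_i\subseteq A$ (on which $s$ is nonzero) gives $0\le\deg\bigl(\calo_A(L-B)|_{D_i}\bigr)=-D_i\cdot B$; hence $D_i\cdot B=0$ for all such $D_i$ and $A\cdot B=0$, contradicting numerical $1$-connectedness of $C_X$ (Lemma~\ref{lnumconn}). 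Applying this with $L=\calo_{C_X}(rK_X)$ proves (iii), and the final equivalence follows: if $H^0(\calo_{C_X}(rK_X))=0$ for all $r>0$ then $\deg\calo_{C_X}(K_X)=0$ by (i) and no $\calo_{C_X}(rK_X)$ is trivial, so $\calo_{C_X}(K_X)$ is non-torsion in $\Pic^0C_X$; conversely, if $\calo_{C_X}(K_X)\in\Pic^0C_X$ is non-torsion, then by (iii) a nonzero section of $\calo_{C_X}(rK_X)$ would force $\calo_{C_X}(K_X)^{\otimes r}\simeq\calo_{C_X}$, which is impossible.

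For (i), which is the main point, I would argue the contrapositive. Assume $\deg\calo_{C_X}(K_X)\ne 0$; since $\sum_i n_i(K_X\cdot D_i)=0$, the effective divisor $V:=\sum_{K_X\cdot D_i>0}n_iD_i$ is nonzero, and I set $W:=C_X-V$ (whose support is disjoint from that of $V$). From the exact sequence
$$0\lm\calo_V(rK_X-W)\lm\calo_{C_X}(rK_X)\lm\calo_W(rK_X)\lm 0$$
(whose first term is the subsheaf of $\calo_{C_X}(rK_X)$ of sections vanishing along $W$), it is enough to find $r>0$ with $H^0(\calo_V(rK_X-W))\ne 0$, and I would build such a section up one component at a time. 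Base case: for any component $D$ of $V$, since $K_X\cdot D\ge 1$ the line bundle $\calo_D(rK_X-W)$ has degree $r(K_X\cdot D)-W\cdot D$, exceeding $2p_a(D)-2$ for $r\gg 0$, so it has a nonzero section. Inductive step: given a nonzero section on an effective $Y$ with $0<Y\le V$ and a component $D$ with $Y+D\le V$, the obstruction to extending over $Y+D$ lies in $H^1(\calo_D(rK_X-W-Y))$, of degree $r(K_X\cdot D)-(W+Y)\cdot D$, again exceeding $2p_a(D)-2$ for $r\gg 0$. As there are only finitely many pairs $(D,Y)$ to consider, a single $r$ works for all of them, so the base section extends to a nonzero section of $\calo_V(rK_X-W)$ and hence of $\calo_{C_X}(rK_X)$. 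The main obstacle is exactly this last bookkeeping: one must choose one large $r$ that annihilates every intermediate obstruction group at once, which works precisely because every component of $V$ meets $K_X$ strictly positively, so the relevant degrees grow linearly in $r$ while the corrections $(W+Y)\cdot D$ stay bounded.
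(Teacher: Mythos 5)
Parts (i), (ii) and the closing equivalence are correct, and for these your route is essentially the paper's with more detail filled in: the paper's proof of (i) simply asserts that a component on which $K_X$ is positive forces $H^0(\calo_{C_X}(rK_X))\neq 0$ for $r\gg 0$, and your inductive extension of a section over the subdivisors of the positive part $V$ (with the obstruction spaces $H^1(\calo_D(rK_X-W-Y))$ killed for one large $r$) is exactly the dévissage that assertion needs; in (ii) you replace the paper's appeal to the explicit construction of $C_X$ from the plane cubic by adjunction plus connectedness (which you should attribute to Lemma~\ref{lnumconn}), which is fine. One harmless slip: the supports of $V$ and $W$ need not be disjoint --- they merely have no common component --- but you never actually use disjointness, only the exact sequence $0\to\calo_V(rK_X-W)\to\calo_{C_X}(rK_X)\to\calo_W(rK_X)\to 0$.

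The genuine gap is in (iii). The paper disposes of (iii) by quoting the standard fact that on a numerically 1-connected effective divisor (Lemma~\ref{lnumconn}), a line bundle of multidegree $0$ with a nonzero section is trivial (\cite[Lemma~3.11(iii)]{Reid}, \cite[Lemma~12.2]{BHPV}); that statement requires no reducedness. Your hand-made substitute does require $C_X$ reduced --- if a component occurs with multiplicity, a section vanishing on the reduced component need not vanish on the subdivisor, so the step ``$s\in H^0(\calo_A(L-B))$'' and the ensuing degree estimate break down --- and the one-line justification you give for reducedness is not adequate and appears to be false. First, $C_X$ can be non-reduced without $C$ being so, through the terms $(m_i-1)E_i$ with $m_i\geq 3$ arising at later stages (e.g.\ a conic tangent to a line, or a cuspidal cubic, after two infinitely near blowups); ruling these out needs the conic and $(-2)$-curve clauses, not the line clause. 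Second, a non-reduced cubic through the nine points is not excluded by almost general position of the 8-element subsets: take $q_1,q_2,q_3$ on a line $L$, at each a generically chosen infinitely near point off the strict transform of $L$, and $q_7,q_8,q_9$ on a second line $L'$ away from $L\cap L'$; one checks that no four of these lie on a line, no seven on a conic, and none lies on a $(-2)$-curve of the intermediate blowups, yet the tangent-direction conditions force the unique cubic through all nine points to be $2L+L'$, so $C_X$ contains the strict transform of $L$ with multiplicity two. The repair is either to cite the 1-connectedness lemma as the paper does, or to rerun your argument with $B$ the maximal effective subdivisor $0\le B\le C_X$ along which $s$ vanishes (the standard proof), which needs no reducedness hypothesis.
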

\begin{proof}
(i) Note that the total degree of $\calo_{C_X}(K_X)$ is $K_X.C_X = -K_X^2 = 0$ so if $\deg \calo_{C_X}(K_X) \neq 0$, then there exists a component $C^0\subset C_X$ on which $K_X$ is ample. It follows that $H^0(\calo_{C_X}(rK_X)) \neq 0$ for $r \gg 0$. 

(ii) 
From the genus formula, we know any (-2)-curve $E$ has $K_X.E = 0$. Hence if all components except possibly one are (-2)-curves, then $\deg \calo_{C_X}(K_X)=0$. Conversely, suppose that $\deg \calo_{C_X}(K_X)=0$. Then all smooth rational components are (-2)-curves. There can be at most one component which is not smooth rational, namely the strict transform of $C$ when it is irreducible of arithmetic genus one.  

(iii) This follows from the fact that $C_X$ is 1-connected (Lemma~\ref{lnumconn}) and \cite[Lemma~3.11(iii)]{Reid} or \cite[Lemma~12.2]{BHPV}. 
\end{proof}
\vs

Ideally, we would like to turn the condition that $\calo_{C_X}(K_X)$ is torsion into a condition on $C$. It seems however, that such a condition does not admit a nice description. We do the best we can, and give a description which covers the ``generic'' case and is sufficient to give lots of interesting examples where it holds and where it fails. Before doing so, it is informative to note that
\begin{enumerate}
\item $\Pic C$ and $\Pic C_X$ are isomorphic to the multiplicative group $k^*$ if $C$ is a not necessarily irreducible nodal cubic.
\item $\Pic C$ and $\Pic C_X$ are isomorphic to the additive group $k$ if $C$ is non-reduced or a cuspidal cubic. 
\end{enumerate}
In the reduced case, this can be found in \cite[Exercise~II.6.9]{Har} and the non-reduced case follows from the work of Artin (see \cite[Sections~4.13, 4.14]{Reid}).

\begin{lemma}  \label{lsamePic}  \mlabel{lsamePic}
The morphism $f: C_X \lm C$ induces an isomorphism $f^* \colon \Pic^0 C \lm \Pic^0 C_X$.
\end{lemma}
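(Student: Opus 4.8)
The morphism $f\colon C_X\lm C$ is obtained by successively taking strict transforms and adding exceptional components, so it is a proper birational morphism between the (possibly reducible) arithmetic-genus-one curves $C_X$ and $C$. The strategy is to show $f^*$ on $\Pic^0$ is both injective and surjective by exploiting the structure of $f$ and the known description of $\Pic C$. First I would reduce to analysing a single blowup $f_i\colon X_i\lm X_{i-1}$ restricted to the anti-canonical curves, i.e. the map $g_i\colon C_i\lm C_{i-1}$, where $C_i = f_{i*}^{-1}C_{i-1} + (m_i-1)E_i$; since $f$ is the composite of the $f_i$, and $\Pic^0$ is functorial, it suffices to prove each $g_i^*\colon \Pic^0 C_{i-1}\lm \Pic^0 C_i$ is an isomorphism, and then compose.

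For a single step, the key point is to control the effect of (a) passing to the strict transform of $C_{i-1}$ at the point $q_i$, which is a partial normalisation at $q_i$ (or an isomorphism if $q_i$ is a smooth point of $C_{i-1}$), and (b) gluing on the extra $(m_i-1)E_i$, which are $\PP^1$'s meeting the strict transform. For (a): pulling back a line bundle of total degree zero along a partial normalisation, one uses the exact sequence relating $\O_{C_{i-1}}^*$ to the pushforward of $\O^*$ from the strict transform, whose cokernel is a product of $\Gm$'s (nodes) or $\mathbb{G}_a$'s (cusps/tangential data) supported at $q_i$; the point is that this extra factor sits inside the \emph{degree} data or is killed when we restrict to $\Pic^0$, because a line bundle in $\Pic^0 C_{i-1}$ that pulls back to something in $\Pic^0$ of the normalisation and trivial on the glued $\PP^1$'s must already be trivial. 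For (b): adding a chain of $\PP^1$'s each meeting the rest of the curve in exactly one point does not change $\Pic^0$, since $\Pic^0\PP^1 = 0$ and the Mayer–Vietoris/normalisation sequence for the nodal gluing shows the restriction map $\Pic^0 C_i \lm \Pic^0(f_{i*}^{-1}C_{i-1})$ is an isomorphism (any unit on a component chain extends, and degree-zero on the whole forces degree-zero on each $\PP^1$). Combining (a) and (b) gives that $g_i^*$ is an isomorphism. Alternatively — and this may be cleaner to write — one invokes the explicit classification from the list just above: $\Pic^0 C$ and $\Pic^0 C_X$ are each $k^*$ in the nodal case and $k$ in the cuspidal/non-reduced case (the smooth elliptic case being immediate since $f$ is then an isomorphism), so one only needs that $f^*$ is a nonzero, hence injective, homomorphism of one-dimensional algebraic groups, and then that it is surjective; surjectivity follows because $f$ is birational so $f_*\O_{C_X} = \O_C$ (using $H^0(\O_{C_X}) = k$ from Lemma~\ref{lnumconn} and that $f$ has connected fibres), whence a line bundle on $C_X$ trivial near the exceptional locus descends.

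The main obstacle I expect is bookkeeping the exceptional contributions cleanly: when $q_i$ is a smooth point of $C_{i-1}$ the strict transform step is an isomorphism and $m_i=1$ so nothing is added, but when $q_i$ is a singular point of $C_{i-1}$ (forced if we are in the degenerate loci, or if $q_i$ is infinitely near a previous point lying on $C$) the strict transform genuinely changes the curve, and one must check that the change is invisible to $\Pic^0$ — that the "lost" singularity at $q_i$ is exactly compensated by the "new" nodes where $(m_i-1)E_i$ meets the strict transform, so that the arithmetic genus stays $1$ and the generalised Jacobian is unchanged. Making this precise is where a normalisation-sequence computation is unavoidable, but it is local at $q_i$ and elementary. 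I would present the argument via the classification of $\Pic^0$ into $\Gm$ and $\mathbb{G}_a$ cases plus the descent statement $f_*\O_{C_X}=\O_C$, relegating the local check to a remark that arithmetic genus and the singularity type of the unique non-rational (or the node/cusp) are preserved under each blowup step.
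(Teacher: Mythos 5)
Your reduction to a single blowup step $f_i\colon C_i \lm C_{i-1}$ is the same as the paper's, but both routes you sketch for that step have genuine gaps. The ``cleaner'' route through one-dimensional algebraic groups does not work as stated: a nonzero homomorphism $\Gm \lm \Gm$ is $t \mapsto t^n$, which is surjective but not injective, so ``nonzero, hence injective'' is backwards; and, more seriously, the input you want to quote --- that $\Pic^0 C_X$ is $k^*$ in the nodal case and $k$ in the cuspidal/non-reduced case --- is not available independently of the lemma. The list preceding the lemma is informational, and for $C_X$ itself (a curve obtained after nine blowups at possibly infinitely near, possibly singular points, whose exceptional components can appear with multiplicity $m_i-1\geq 2$ and meet the strict transform tangentially or in several points) identifying the type of its generalised Jacobian is essentially the assertion being proved. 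Your dichotomy also omits reduced singular cubics such as a conic with a tangent line or three concurrent lines. Finally, your surjectivity sketch (``a line bundle trivial near the exceptional locus descends'') assumes triviality on a neighbourhood, or on the full, possibly non-reduced fibre, whereas degree zero only gives triviality on each reduced exceptional component.

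The step-by-step route is closer to the paper, but the claim you lean on in (b) is false in exactly the relevant cases: $(m_i-1)E_i$ meets the strict transform in two points when $q_i$ is a node, tangentially to order two when $q_i$ is a cusp, and is non-reduced when $m_i\geq 3$; it is precisely these configurations that restore the $\Gm$ or $\mathbb{G}_a$ lost in passing to the strict transform, so ``each exceptional curve meets the rest in one point, hence $\Pic^0$ is unchanged'' cannot be the mechanism, and the assertion that the extra factor ``is killed when we restrict to $\Pic^0$'' is exactly the point that needs proof. The paper closes this gap by proving, case by case, the sheaf-level isomorphism $\calo_{C_{i-1}} \xrightarrow{\sim} f_{i*}\calo_{C_i}$ by an explicit local computation at $q_i$, and feeding it into the Leray sequence $0 \lm H^1(f_{i*}\calo_{C_i}^*) \lm H^1(\calo_{C_i}^*) \lm H^0(R^1f_{i*}\calo_{C_i}^*) \lm 0$, whose last term is the copy of $\Z$ recording degrees on $E_i$; this gives at once that $f_i^*$ is injective (the projection-formula use of $f_*\calo=\calo$, which is what your appeal to $f_*\calo_{C_X}=\calo_C$ should have been for) and that its image is exactly $\Pic^0 C_i$. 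Since your proposal never carries out, or even identifies, this key computation, it does not establish the lemma as written.
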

\begin{proof}
This is proved by showing case by case that $f_i^*\colon \Pic^0 C_{i-1} \lm \Pic^0 C_i$ is an isomorphism. There are quite a few possibilities, all with similar proof, so we shall only illustrate it in the case $f_i$ blows up a cusp. In this case, $C_i = E \cup C'$ where $E$ is the exceptional curve and $C'$ is the strict transform of $C_{i-1}$. Also, $E$ and $C'$ intersect in a single point $q$ tangentially with multiplicity 2. The Leray-Serre spectral sequence gives the exact sequence
$$ 0 \lm H^1(f_{i*}\calo_{C_i}^*) \xrightarrow{\iota} H^1(\calo_{C_i}^*) \xrightarrow{d} H^0(R^1f_{i*}\calo_{C_i}^*)  \lm 0.$$
Now $E$ is smooth rational, so $H^0(R^1f_{i*}\calo_{C_i}^*) = \Z$ and $d$ gives the degree of a line bundle restricted to $E$. Hence $\Pic^0 C_i$ lies in the image of $\iota$. Also, $f_i^*$ is the composite of $\iota$ with the natural map 
$$H^1(\calo_{C_{i-1}}^*)  \lm H^1(f_{i*}\calo_{C_i}^*).  $$
It suffices now to prove that the natural inclusion $j\colon \calo_{C_{i-1}}  \hookrightarrow f_{i*}\calo_{C_i}$ is an isomorphism. Away from the cusp $f_i(q)$, $j$ is of course an isomorphism. Any local section of $f_{i*}\calo_{C_i}$ at $f_i(q)$ is given by a local section $\xi$ of $\calo_{C'}$ at $q$, and a global section $\alpha$ of $E$ which agrees with $\xi$ at $q$ to order 2. Now $\alpha$ must be a constant so we see that $\xi$ is in fact a local section of $\calo_{C_{i-1}}$. 
\end{proof}
\vs

The lemma is clear when all the $q_i$ lie on the smooth locus of $C$. In this case, $f\colon C_X \lm C$ is already an isomorphism and furthermore, identifying $C_X$ with $C$ using $f$ we find  $\calo_{C_X}(K_X) \simeq \calo_C(-3) \otimes_C \calo(\sum_{i=1}^9 q_i)$. This motivates the following otherwise unconventional 

\begin{defn}  \label{dKXonC}  \mlabel{dKXonC}
Let $q_1,\ldots, q_9$ be 9 points in $\PP^2$, any 8 of which are in almost general position. We say that $\calo_C(-3) \otimes_C \calo(\sum_{i=1}^9 q_i)$ is a {\em well-defined degree 0 line bundle} if $\calo_{C_X}(K_X)$ has degree zero and in this case we define $\calo_C(-3) \otimes_C \calo(\sum_{i=1}^9 q_i)$ to be the line bundle on $C$ corresponding to $\calo_{C_X}(K_X)$ under the isomorphism of Lemma~\ref{lsamePic}. We say $q_1,\ldots, q_9$ are in {\em almost general position} if furthermore, $\calo_C(-3) \otimes_C \calo(\sum_{i=1}^9 q_i)$ is not torsion in $\Pic^0 C$. 
\end{defn}

\begin{cor}  \label{calmostgen}  \mlabel{calmostgen}
If $X$ is the blowup of $\PP^2$ at 9 points, then $H^1(\ox(-rK_X)) = 0$ for all $r \geq 0$ if and only if the points are in almost general position. 
\end{cor}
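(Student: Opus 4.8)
The plan is to assemble Lemmas~\ref{ldem}, \ref{lKdeg0} and \ref{lsamePic} into a single chain of equivalences translating the vanishing $H^1(\ox(-rK_X)) = 0$ into a torsion statement about a line bundle on the plane cubic $C$ through the $q_i$, and then to account separately for the requirement, built into Definition~\ref{dKXonC}, that any $8$ of the $q_i$ lie in almost general position.

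For the chain: by Lemma~\ref{ldem} the anti-canonical curve $C_X \in |-K_X|$ exists and $H^1(\ox(-rK_X)) = 0$ for all $r \geq 0$ if and only if $H^0(\calo_{C_X}(rK_X)) = 0$ for all $r > 0$; by the last assertion of Lemma~\ref{lKdeg0} this holds if and only if $\calo_{C_X}(K_X)$ lies in $\Pic^0 C_X$ and is non-torsion there, where part (i) of that lemma is precisely what supplies the degree-zero hypothesis, so that the torsion condition is not vacuous. By Lemma~\ref{lsamePic}, $f^* \colon \Pic^0 C \iso \Pic^0 C_X$ is an isomorphism, and by Definition~\ref{dKXonC} it is exactly the isomorphism used to define $\calo_C(-3) \otimes_C \calo(\sum_{i=1}^9 q_i)$ as the preimage of $\calo_{C_X}(K_X)$ whenever the latter has degree zero. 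Hence $\calo_{C_X}(K_X)$ has degree zero precisely when $\calo_C(-3)\otimes_C\calo(\sum_{i=1}^9 q_i)$ is a well-defined degree $0$ line bundle, and then it is non-torsion in $\Pic^0 C_X$ if and only if $\calo_C(-3)\otimes_C\calo(\sum_{i=1}^9 q_i)$ is non-torsion in $\Pic^0 C$. Putting these together, $H^1(\ox(-rK_X)) = 0$ for all $r \geq 0$ if and only if $\calo_C(-3)\otimes_C\calo(\sum_{i=1}^9 q_i)$ is a well-defined, non-torsion element of $\Pic^0 C$.

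It remains to reconcile this with Definition~\ref{dKXonC}. If $q_1,\ldots,q_9$ are in almost general position, the required conditions on $\calo_C(-3)\otimes_C\calo(\sum_{i=1}^9 q_i)$ hold by definition, so the chain yields $H^1(\ox(-rK_X)) = 0$ for all $r \geq 0$. Conversely, given that vanishing, the chain supplies those conditions, and we must still see that any $8$ of the $q_i$ are in almost general position. Since $X$ is rational it has negative Kodaira dimension, so Proposition~\ref{pnec} applies to $(X,0)$ and $-K_X$ is nef; contracting the last exceptional curve of \eqref{eblowup} gives the blowup $X'$ of $\PP^2$ at $q_1,\ldots,q_8$, and a short intersection computation (using $K_X = f_9^*K_{X'} + E_9$ on strict transforms) shows $-K_{X'}$ is again nef, hence, as $K_{X'}^2 = 1 > 0$, nef and big, so $X'$ is almost Fano; by Demazure's classification (the discussion preceding Definition~\ref{dgenpos}, \cite{Dem}) this forces $q_1,\ldots,q_8$ to be in almost general position, and the same argument applies to any $8$ of the points. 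I expect this last reduction to ``any $8$ in almost general position'' to be the only non-formal step, resting on Proposition~\ref{pnec} and Demazure's work; everything else is bookkeeping, the one delicate point being to carry the degree-zero hypothesis along $f^*$ correctly.
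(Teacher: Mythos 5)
Your proof is correct and follows essentially the route the paper intends: the corollary is just the concatenation of Lemmas~\ref{ldem}, \ref{lKdeg0} and \ref{lsamePic} with Definition~\ref{dKXonC}. Your explicit treatment of the requirement that any $8$ of the points be in almost general position (vanishing $\Rightarrow$ $-K_X$ nef by Proposition~\ref{pnec} $\Rightarrow$ the $8$-point blowdown is almost Fano $\Rightarrow$ Demazure) is precisely the content of the paper's preceding remark that one may assume any $8$ points are in almost general position, since otherwise $-K_X$ fails to be nef.
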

If we pick $C \subset \PP^2$ to be a smooth or nodal cubic, then $\Pic^0 C$ has an infinite number of torsion points, although most are not torsion. It is then easy to construct examples of 9 points on $C$ which are in almost general position and another 9 which are not. 

The following result shows that one implication of Conjecture~\ref{cmain} is almost true in the non-weighted case. It is a mild extension of \cite[III - Th\'{e}or\`{e}me~1]{Dem}. 

\begin{thm}  \label{tdemazure}  \mlabel{tdemazure}
Let $X$ be a smooth projective surface. The following are equivalent. 
\begin{enumerate}
 \item $X$ is almost Fano or the blowup of $\PP^2$ at 9 points in almost general position. (Note these two are mutually exclusive). 
 \item $X$ is either $\PP^1\times \PP^1$, the second Hirzebruch surface $\mathbb{F}_2 = \PP_{\PP^1}(\calo \oplus \calo(-2))$ or a blowup of $\PP^2$ at up to 9 points in almost general position. 
\item $X$ has negative Kodaira dimension and $H^1(\ox(-rK_X)) = 0$ for $r \geq 0$.  
\end{enumerate}
\end{thm}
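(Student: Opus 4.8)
The plan is to prove the chain of implications (ii) $\Rightarrow$ (i) $\Rightarrow$ (iii) $\Rightarrow$ (ii), using the lemmas already assembled in the section as black boxes. The implication (ii) $\Rightarrow$ (i) is essentially bookkeeping: I need only observe that $\mathbb{F}_2$ is almost Fano. Indeed $-K_{\mathbb{F}_2}$ is nef (it is the pullback of an ample class plus an effective combination, or one checks directly against the two extremal curves — the $(-2)$-section and a fibre) and $K_{\mathbb{F}_2}^2 = 8 > 0$, so $-K_{\mathbb{F}_2}$ is big and nef. All the blowups of $\PP^2$ at up to $8$ points in almost general position are almost Fano by Demazure's classification (cited after Definition~\ref{dgenpos}), and a blowup at $9$ points in almost general position is exactly the extra case listed in (i), so (i) holds in every case of (ii).

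For (i) $\Rightarrow$ (iii), I would split into the two mutually exclusive cases. If $X$ is almost Fano, then $-K_X$ is big and nef, so certainly $X$ has negative Kodaira dimension ($K_X$ cannot be effective since $-K_X$ is big); and for the cohomology vanishing I would invoke Kawamata--Viehweg vanishing: since $(X, 0)$ is klt and $-K_X = K_X + (-2K_X)$ with $-2K_X$ big and nef, $H^1(\ox(-rK_X)) = H^1(\ox(K_X + (-(r+1)K_X))) = 0$ for all $r \geq 0$, the divisor $-(r+1)K_X$ being big and nef. If instead $X$ is the blowup of $\PP^2$ at $9$ points in almost general position, then $K_X^2 = 0$, so $X$ still has negative Kodaira dimension (as $-K_X$ is nef and nonzero, $K_X$ cannot be effective), and Corollary~\ref{calmostgen} gives exactly $H^1(\ox(-rK_X)) = 0$ for all $r \geq 0$.

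The substantive implication is (iii) $\Rightarrow$ (ii), and this is where I expect the main work — it is the extension of Demazure's theorem and relies on the full classification of surfaces of negative Kodaira dimension. Since $X$ has negative Kodaira dimension it is rational or birationally ruled over a curve of positive genus; but if $X$ were ruled over a curve $C$ of genus $g \geq 1$, pulling back a general fibre $F$ and a nonzero degree-zero line bundle on $C$ one produces, via the exact sequence $0 \to \ox(-rK_X - F) \to \ox(-rK_X) \to \mathcal{O}_F(-rK_X) \to 0$ and a cohomology-and-base-change / Leray argument over $C$, a nonvanishing $H^1$, contradicting (iii) (this is the analogue of the curve computation in the proof of Proposition~\ref{pnec}); so $X$ is rational. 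Now run the minimal model program: $X$ is a blowup of $\PP^2$ or of a Hirzebruch surface $\mathbb{F}_n$. I would then argue that the hypothesis (iii) forces $n \leq 2$ among the relatively minimal models that can occur, since on $\mathbb{F}_n$ with $n \geq 3$ the negative section $C_0$ has $-K_X \cdot C_0 = 2 - n < 0$, whence $-K_X$ is not nef and one gets $H^1(\ox(-rK_X)) \neq 0$ for large divisible $r$ by restricting to $C_0$ exactly as in Proposition~\ref{pnec}; and $\mathbb{F}_1$ is already a blowup of $\PP^2$. This reduces us to blowups of $\PP^2$ (and $\PP^1 \times \PP^1 = \mathbb{F}_0$, $\mathbb{F}_2$). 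For blowups $f: X \to \PP^2$ at $s$ points, one has $K_X^2 = 9 - s$, so Riemann--Roch ($\chi(\ox(-rK_X)) = 1 + \binom{r+1}{2}K_X^2$, roughly) together with $H^1 = H^2 = 0$ for $r \geq 0$ forces $K_X^2 \geq 0$, i.e. $s \leq 9$; and when $s \leq 8$ one shows the points must be in almost general position (if not, some curve $C$ has $-K_X \cdot C < 0$ — a $(-3)$-or-worse curve arising from the violated Demazure condition — and Proposition~\ref{pnec}-style restriction kills the vanishing), while $s = 9$ is handled by Corollary~\ref{calmostgen}. The delicate point — and the main obstacle — is the case-analysis verifying that each failure of "almost general position" in Definition~\ref{dgenpos} produces a curve $C$ with $-K_X \cdot C < 0$; this is precisely Demazure's analysis, and I would either cite \cite{Dem} for it or reproduce the short intersection-theory computations (e.g. the strict transform of a line through $4$ of the points is a $(-3)$-curve, etc.).
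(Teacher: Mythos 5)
Your proposal is correct in substance and rests on the same three external inputs as the paper's proof --- Demazure's analysis of blowups of $\PP^2$ at up to $8$ points, the nef/positivity statement of Proposition~\ref{pnec}, and Corollary~\ref{calmostgen} for the $9$-point case --- but it traverses the cycle of implications differently. The paper proves (iii) $\Rightarrow$ (i) by invoking Proposition~\ref{pnec} directly (so $-K_X$ is nef and $K_X^2 \geq 0$; if $K_X^2>0$ we are almost Fano, and if $K_X^2=0$ irregularity plus Corollary~\ref{calmostgen} finish), and proves (i) $\Rightarrow$ (ii) by observing that an almost Fano surface is rational with relatively minimal model $\PP^2$, $\PP^1\times\PP^1$ or $\mathbb{F}_2$ and then quoting Demazure; the vanishing in (iii) for the surfaces in (ii) is imported from Demazure and Corollary~\ref{calmostgen} rather than proved afresh. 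Your direct proof of (i) $\Rightarrow$ (iii) via Kawamata--Viehweg vanishing (writing $-rK_X = K_X + (-(r+1)K_X)$ with $-(r+1)K_X$ big and nef) is a genuine alternative the paper does not use, and is valid in characteristic zero; your (iii) $\Rightarrow$ (ii) is essentially the paper's (iii) $\Rightarrow$ (i) $\Rightarrow$ (ii) collapsed into one argument, with the delicate case analysis (each violation of Definition~\ref{dgenpos} produces a curve $C$ with $-K_X.C<0$) delegated to \cite{Dem}, exactly as the paper does.

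One step as written would fail and should be repaired: to exclude surfaces birationally ruled over a curve of genus $g \geq 1$, restricting $\ox(-rK_X)$ to a general fibre $F$ cannot produce a nonzero $H^1$, since $-rK_X|_F$ has degree $2r \geq 0$ on $F \simeq \PP^1$ and hence vanishing $H^1$, so the proposed exact-sequence/Leray mechanism gives no contradiction. The efficient fix --- and the one the paper uses --- is irregularity: such a surface has $h^1(\ox) = g > 0$, contradicting hypothesis (iii) at $r=0$ (in the paper, bases of genus $\geq 2$ are in any case already excluded because Proposition~\ref{pnec} gives $K_X^2 \geq 0$, while a surface birationally ruled over a genus $g$ curve has $K_X^2 \leq 8(1-g)$). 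With that substitution your argument goes through.
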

\begin{proof}
 Demazure proved this theorem when $X$ is assumed to be the blowup of $\PP^2$ at up to 8 arbitrary points so we need only check the theorem in the case of other surfaces. Now both $\mathbb{F}_2$ and $\PP^1 \times \PP^1$ are almost Fano and have $H^1(\ox(-rK_X)) = 0$ for $r \geq 0$ so ii) $\Longrightarrow$ iii) and i) by Corollary~\ref{calmostgen}. 

We now prove iii) $\Longrightarrow$ i). From Proposition~\ref{pnec}, we know that $-K_X$ is nef and $K_X^2\geq 0$. We may assume that $K_X^2 = 0$ in which case either $X$ is ruled over an elliptic curve, so $H^1(\ox) \neq 0$ or $X$ is rational. In this latter case, the $-K_X$ nef condition ensures that $X$ is the blowup of $\PP^2$ at 9 points so we are done by Corollary~\ref{calmostgen}. 

We finally prove i) $\Longrightarrow$ ii) and so can assume that $X$ is almost Fano.  Since $-K_X$ is big and nef, some multiple of it is effective. It is non-zero as $K_X^2 >0$ so the Kodaira dimension of $X$ must be negative. Furthermore, $K_X^2 >0$ ensures that $X$ must be rational and $-K_X$ nef means that its relatively minimal model is either $\PP^1 \times \PP^1, \PP^2$ or $\mathbb{F}_2$. We are now reduced to the situation that Demazure has already proved. 
\end{proof}\vs

Together with Corollary~\ref{cneccriterion}, this theorem proves Theorem~\ref{tnec}.

\vs
Unfortunately, we do not know how to classify almost Fano weighted projective surfaces, and it seems that there is no nice analoguous description of them. If we limit the possible weights on ($-s$)-curves, then the following is useful in constraining the possibilities. 

\begin{prop}  \label{pcentreaFano}  \mlabel{pcentreaFano}
Let $(X, \Delta = \sum_i (1 - \frac{1}{p_i})D_i)$ be an almost Fano weighted projective surface. Suppose that whenever $D_i$ is a smooth rational curve with self-intersection $D_i^2 = -s$ and $s \geq 3$ we have $2p_i < s$. Then $-K_X$ is nef and $K_X^2 \geq 0$. In particular, $X$ is either almost Fano, the blowup of $\PP^2$ at 9 points, or geometrically ruled over an elliptic curve. 
\end{prop}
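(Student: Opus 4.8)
The plan is to prove the two numerical assertions $-K_X$ nef and $K_X^2 \geq 0$ first, and then deduce the list of possibilities for $X$ from the classification underlying the proof of Theorem~\ref{tdemazure}. For nefness of $-K_X$ I would verify $-K_X \cdot C \geq 0$ for every irreducible curve $C \subset X$, writing $-K_X = -(K_X+\Delta) + \Delta$ and using that $-(K_X+\Delta)$ is nef since $(X,\Delta)$ is almost Fano. If $C$ is not one of the weighted divisors $D_i$ then $\Delta \cdot C \geq 0$ (as $\Delta$ is effective), so $-K_X \cdot C = -(K_X+\Delta)\cdot C + \Delta\cdot C \geq 0$; and the same works if $C = D_i$ with $D_i^2 \geq 0$, because then $\Delta\cdot D_i = (1-\tfrac1{p_i})D_i^2 + \sum_{j\neq i}(1-\tfrac1{p_j})D_i\cdot D_j \geq 0$, all summands being non-negative. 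The only case needing genuine work — and the one where the weight hypothesis is used — is $C = D_i$ with $D_i^2 = -s$, $s \geq 1$.

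For that case I would combine adjunction, $K_X\cdot D_i = 2g(D_i) - 2 - D_i^2 = 2g(D_i) - 2 + s$, with the nef inequality $(K_X+\Delta)\cdot D_i \leq 0$ and the crude bound $\Delta\cdot D_i \geq (1-\tfrac1{p_i})D_i^2$ (discarding the non-negative cross terms), giving
\[ 0 \;\geq\; (K_X+\Delta)\cdot D_i \;\geq\; \bigl(2g(D_i)-2+s\bigr) - \bigl(1-\tfrac1{p_i}\bigr)s \;=\; 2g(D_i) - 2 + \tfrac{s}{p_i}. \]
If $g(D_i) \geq 1$ the right-hand side is at least $\tfrac{s}{p_i} > 0$, a contradiction, so $D_i$ is rational; and then $\tfrac{s}{p_i}\leq 2$, i.e. $s \leq 2p_i$, which with the hypothesis forces $s \leq 2$ (for $s \geq 3$ the hypothesis gives $2p_i < s$). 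Hence every weighted divisor of negative self-intersection is a smooth rational $(-1)$- or $(-2)$-curve, for which adjunction gives $-K_X\cdot D_i = 2-s \in \{0,1\}$. This exhausts the curves, so $-K_X$ is nef, and $K_X^2 = (-K_X)^2 \geq 0$ because a nef divisor on a surface has non-negative self-intersection. I expect this negative-self-intersection subcase to be the main (though brief) point: the real content is that $2p_i < s$ is precisely what prevents an almost Fano weighted surface from carrying a weighted curve of self-intersection $\leq -3$.

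For the final clause, since $-(K_X+\Delta)$ is big, some positive multiple of $-K_X = -(K_X+\Delta)+\Delta$ is effective, so $X$ has negative Kodaira dimension and is rational or birationally ruled over a smooth curve $B$. In the non-rational case, writing $K_X^2 = 8(1-g(B)) - m$ with $m \geq 0$ the number of blow-downs to a geometrically ruled model, $K_X^2 \geq 0$ forces $g(B) = 1$ and $m = 0$, so $X$ is geometrically ruled over an elliptic curve. In the rational case, as in the proof of Theorem~\ref{tdemazure}, if $K_X^2 > 0$ then $-K_X$ is big and nef and $X$ is almost Fano, while if $K_X^2 = 0$ the nefness of $-K_X$ forces $X$ to be a blowup of $\PP^2$ at nine points (some possibly infinitely near). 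The only slightly delicate part here is the genus/minimality bookkeeping in the non-rational case, which the formula for $K_X^2$ settles.
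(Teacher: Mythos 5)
Your argument is correct and follows essentially the same route as the paper: nefness of $-K_X$ is reduced, via adjunction and the crude bound $\Delta\cdot D_i \geq (1-\tfrac{1}{p_i})D_i^2$, to the inequality $2p_i \geq s$ for a weighted rational curve of self-intersection $-s$, which the weight hypothesis forbids for $s \geq 3$, leaving only $(-1)$- and $(-2)$-curves. The only cosmetic deviation is at $K_X^2 \geq 0$, where you invoke the general fact that a nef divisor on a surface has non-negative self-intersection while the paper instead pairs the nef divisor $-K_X$ with an effective multiple of the big and nef divisor $-(K_X+\Delta)$; both are valid, and your handling of the final trichotomy matches the classification already used in the proof of Theorem~\ref{tdemazure}.
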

\begin{proof}
 We first show that $-K_X$ is nef. Let $C$ be an irreducible curve. Now $-(K_X + \Delta)$ nef ensures that 
\begin{equation}  \label{ecentreaFano}
-K_X.C \geq \Delta.C 
\begin{cases}
 > C^2 & \text{if $C$ is weighted and $C^2 <0$}, \\
 \geq 0 & \text{else}
\end{cases}
\end{equation}
We are done unless $C = D_i$ for some $i$ and $D_i^2 = -s$ is negative. The genus formula and (\ref{ecentreaFano}) gives 
$$ -K_X .D_i = 2 - 2p_a(D_i) + D_i^2 > D_i^2 $$
so for the strict inequality to hold, we must have that $p_a(D_i)=0$, that is, $D_i$ is smooth rational. Then 
$$ 2 + D_i^2 = -K_X.D_i  \geq \Delta.D_i \geq (1 - \tfrac{1}{p_i})D_i^2.$$
Re-arranging gives $2p_i \geq - D_i^2$ contradicting our assumption on the weights. 

Now $-(K_X + \Delta)$ big and nef ensures that some multiple of it is effective, so the result just proved yields
$$ K_X.(K_X + \Delta) \geq 0 \Longrightarrow K_X^2 \geq -K_X . \Delta \geq 0.$$
\end{proof}
\vs

For example, if $X$ is the Hirzebruch surface $\mathbb{F}_s = \PP_{\PP^1}(\calo \oplus \calo(-s))$ where $s \geq 3$, then there is only one rational curve of negative self-intersection and one must weight this curve if $(X,\Delta)$ is to have a 2-hereditary tilting bundle.

\section{Some 2-hereditary tilting bundles on projective surfaces}

In this section we show that many almost Fano surfaces do indeed have 2-hereditary tilting bundles and that furthermore, their endomorphism algebras are tame.

\begin{prop}  \label{ptame}  \mlabel{ptame}
 Let $(X,\Delta)$ be a weighted projective variety of dimension $n$. Suppose that one of the following holds.
\begin{enumerate}
 \item $(X,\Delta)$ is Fano or,
 \item $(X,\Delta) = X$ is an almost Fano surface.
\end{enumerate}
Then for any $n$-hereditary tilting bundle $\calt$ on $(X,\Delta)$, the endomorphism algebra $\End \calt$ is tame.
\end{prop}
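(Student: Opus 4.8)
By Proposition~\ref{pnheredbundle}, the endomorphism algebra $\Lambda = \End_A \calt$ is tame precisely when the orbit algebra $\Pi := \bigoplus_{i \geq 0} \Hom_A(\calt, \omega_A^{-i} \otimes_A \calt)$ is finitely generated as a module over some commutative noetherian ring. The plan is to identify a suitable commutative ring over which $\Pi$ lives and to prove finite generation by a standard ``coherent algebra over a projective cone'' argument. First I would set $E = \send_A \calt$, the sheaf of algebras on $X$; since $\calt$ is locally projective and tilting, $E$ is an order on $X$ and $\Hom_A(\calt, \omega_A^{-i}\otimes_A\calt) = H^0(E \otimes_X \ox(-iK_A))$, where $K_A = K_X + \Delta$ as in Section~\ref{slog}. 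Thus $\Pi = \bigoplus_{i \geq 0} H^0(X, E(-iK_A))$, the total section algebra (over $X$) of the sheaf of algebras $E$ with respect to the $\Q$-divisor $-K_A$ (interpreting the grading via an appropriately divisible integral multiple, i.e.\ working with $E(-i m K_A)$ for $m$ a common multiple of the $p_i$, which only rescales the grading and does not affect finite generation as a module over a commutative noetherian ring).

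The key point is now purely commutative-algebraic: the center (or any large enough commutative subring) controls finiteness. Consider the commutative section ring $S = \bigoplus_{i \geq 0} H^0(X, \ox(-imK_A))$ for suitable $m$. In the Fano case (i), $-K_A$ is ample, so $S$ is a finitely generated $k$-algebra (it is the homogeneous coordinate ring of $X$ under a projective embedding by a multiple of $-K_A$), hence noetherian; and $\Pi = \bigoplus_i H^0(X, E(-imK_A))$ is a finitely generated $S$-module because $E$ is a coherent sheaf on the projective variety $X$ and $\bigoplus_i E(-imK_A)$ is a coherent sheaf of modules over $\bigoplus_i \ox(-imK_A)$ on the Proj of that ring --- this is the usual statement that for a coherent sheaf $\mathcal{F}$ and an ample line bundle $\mathcal{O}(1)$ on a projective variety, $\bigoplus_i H^0(\mathcal{F}(i))$ is a finite module over $\bigoplus_i H^0(\mathcal{O}(i))$. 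In the almost Fano surface case (ii), $-K_X = -K_A$ is big and nef but not necessarily ample; here I would instead take $S$ to be the ring of sections $\bigoplus_i H^0(X, \ox(-imK_X))$, which is still finitely generated (it is a section ring of a big and nef divisor on a surface; by the surface case of the basepoint-free theorem, some multiple of $-K_X$ is semiample, so this ring is finitely generated, being the coordinate ring of the ample model $X \to X'$) and noetherian, and again $\Pi$ is a finite module over it because pushing $E$ forward along the semiample fibration $X \to X'$ yields a coherent sheaf on $X'$, and $\Pi = \bigoplus_i H^0(X', (\pi_* E)(-imK_{X'}))$ with $-K_{X'}$ ample, reducing to the previous paragraph.

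I expect the main obstacle to be handling case (ii) cleanly: in the almost Fano (non-Fano) situation one cannot directly invoke projective-normality/ampleness, and one must argue that $-K_X$ semiample (so that the section ring is finitely generated) and that $E$ descends appropriately along the morphism $X \to X' = \proj S$ contracting the curves orthogonal to $-K_X$. For surfaces this is elementary --- the only curves with $-K_X . C = 0$ are finitely many $(-2)$-curves forming $ADE$ configurations, $X'$ is the (possibly singular) anticanonical model, and $\pi_* E$ is coherent --- but it does require knowing that $\Hom_A(\calt, \omega_A^{-i}\otimes_A\calt)$ computed on $X$ agrees with sections on $X'$, i.e.\ $R^0\pi_* (E(-imK_X)) $ has the expected sections and no contribution is lost, which follows since $\pi$ has connected fibers and $\pi_*\ox = \mathcal{O}_{X'}$. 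Once this identification is in place, finite generation of $\Pi$ over the noetherian commutative ring $S$ (equivalently, over its subring generated in degree giving the ample embedding of $X'$) is immediate, and Proposition~\ref{pnheredbundle} then yields tameness of $\End\calt$.
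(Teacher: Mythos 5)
Your proposal is correct and follows essentially the same route as the paper: identify the orbit algebra with the Serre module(s) of $\send \calt$ over the anti-canonical section ring, use ampleness of $-(K_X+\Delta)$ in the Fano case (working with an appropriately divisible Veronese to handle the $\Q$-divisor grading), and in the almost Fano surface case pass to the anticanonical model $\bar{X}$ via the crepant contraction of $(-2)$-curves and take the Serre module of $f_*\send_X\calt$ there, exactly as the paper does (citing Demazure where you cite base-point freeness).
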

\begin{proof}
We use the criterion of Proposition~\ref{pnheredbundle} and examine the orbit algebra $\Pi = \bigoplus_i \Hom_A(\calt,\omega_A^{\otimes -i} \otimes_A \calt)$ where $A$ is the GL-order associated to $(X,\Delta)$. Let $r$ be sufficiently divisible so that $r\Delta$ is integral. Then $\omega_A^{\otimes -ir} = \ox(-ir(K_X+\Delta)) \otimes_X A$ so the anti-canonical ring $R = \bigoplus_{i\geq 0} H^0(\ox(-ir(K_X+\Delta)))$ lies in the centre of $\Pi$. 

If $(X,\Delta)$ is Fano then $\proj R = X$ and we can use the theory of Serre modules with respect to the polarisation $-r(K_X+\Delta)$. Then $\Pi$ is the finite direct sum of the Serre modules of the following coherent sheaves on $X$: $\shom_A(\calt,\omega_A^{\otimes-i} \otimes_A \calt)$ for $i = 0,1,\ldots, r-1$. It is thus finitely generated over $R$.

If $X$ is an almost Fano surface, then we may take $r=1$ and \cite[Expos\'{e} V]{Dem} the anti-canonical model $\bar{X}:= \proj R$ is Gorenstein Fano. Furthermore, there is a birational morphism $f:X \lm \bar{X}$ which contracts (-2)-curves only to rational double points. The ring $R$ is finitely generated being also
the anti-canonical ring on $\bar{X}$. We may thus use the theory of Serre modules on $\bar{X}$ with respect to the polarisation $-K_{\bar{X}}$. Now $f^*\omega_{\bar{X}} = \omega_X$ so $\Pi$ is just the Serre module of the coherent sheaf $f_* \send_X \calt$. We now see that in both cases, $\End \calt$ is tame. 
\end{proof}\vs

We seek tilting bundles on almost Fano surfaces which are {\em quasi-canonical}, by which we mean that they are direct sums of line bundles. Given any direct sum of line bundles $\calt = \bigoplus_{i\in I} \call_i$, we let 
$$\mathbb{E} = \mathbb{E}(\calt) := \{ \call_i^{-1} \otimes_X \call_j | i,j \in I\},$$
the set of indecomposable summands of $\send_X \calt$. It is closed under inverses. The first result gives a simple criterion for when a quasi-canonical tilting bundle $\calt$  on $X$ is 2-hereditary.

\begin{prop}  \label{pheredDP}  \mlabel{pheredDP}
Let $X$ be an almost Fano surface, so there exists a smooth elliptic curve $C \in |-K|$. Let $\calt$ be a quasi-canonical tilting bundle on $X$. Then in the notation above, $\calt$ is 2-hereditary if the following 2 conditions hold:
\begin{enumerate}
 \item the first Chern class $c_1(\call)$ of any $\call \in \mathbb{E}(\calt)$ satisfies $c_1(\call).K_X \leq K_X^2$ and,
 \item if $c_1(\call).K_X = K_X^2$ then $\call|_C \not\simeq \calo_C(K_X)$. 
\end{enumerate}
Suppose that $c_1(\call)-K_X$ is effective. Then automatically $c_1(\call).K_X \leq K_X^2$ and furthermore, $c_1(\call).K_X < K_X^2$ if either i) $X$ is Fano and $c_1(\call)-K_X$ is non-zero or ii),  $c_1(\call)-K_X$ is linearly equivalent to a non-zero sum of curves, not all of which are (-2)-curves. 
\end{prop}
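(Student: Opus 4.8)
The plan is to translate $2$-heredity of $\calt$ into cohomology vanishing for the line bundles in $\mathbb{E}(\calt)$ twisted by multiples of $-K_X$, reduce that by induction to vanishing of $H^1$ of line bundles on the elliptic curve $C\in|-K_X|$, and finish with a degree count on $C$. Writing $\calt=\bigoplus_i\call_i$ and recalling that the indecomposable summands of $\send_X\calt$ are exactly the $\call\in\mathbb{E}(\calt)$, Propositions~\ref{plinebundles} and \ref{pSerredual} (with $A=\ox$, so $\omega_A=\ox(K_X)$) give $\Ext^j_X(\calt,\ox(-mK_X)\otimes_X\calt)=H^j(\send_X\calt\otimes_X\ox(-mK_X))$, which vanishes if and only if $H^j(\call(-mK_X))=0$ for every $\call\in\mathbb{E}(\calt)$. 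Since $\calt$ is already a tilting bundle (which disposes of $m=0$), by Definition~\ref{dnheredbundle} it is $2$-hereditary as soon as $H^1(\call(-mK_X))=H^2(\call(-mK_X))=0$ for all $\call\in\mathbb{E}(\calt)$ and all $m\geq1$. I would fix $\call$ and prove these vanishings by induction on $m\geq0$: as $\ox(-C)=\ox(K_X)$, tensoring $0\lm\ox(K_X)\lm\ox\lm\calo_C\lm0$ with $\call(-mK_X)$ produces $0\lm\call(-(m-1)K_X)\lm\call(-mK_X)\lm\call(-mK_X)|_C\lm0$, and since $H^2$ of a sheaf on the curve $C$ vanishes while the inductive hypothesis kills $H^1$ and $H^2$ of $\call(-(m-1)K_X)$ (the base case $m=0$ being the tilting property), the long exact sequence yields $H^2(\call(-mK_X))=0$ and an injection $H^1(\call(-mK_X))\hookrightarrow H^1(\call(-mK_X)|_C)$.

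Everything thus reduces to $H^1(\call(-mK_X)|_C)=0$. The line bundle $\call(-mK_X)|_C$ on the smooth elliptic curve $C$ has degree $(c_1(\call)-mK_X).(-K_X)=mK_X^2-c_1(\call).K_X$, and on an elliptic curve $H^1$ of a line bundle vanishes whenever the degree is positive, or the degree is $0$ and the bundle is non-trivial. Since $X$ is almost Fano, $K_X^2>0$ (because $-K_X$ is big and nef). For $m\geq2$, hypothesis~(i) makes this degree at least $(m-1)K_X^2\geq K_X^2>0$. For $m=1$, hypothesis~(i) makes the degree $K_X^2-c_1(\call).K_X\geq0$; if it is positive we are done, and if it equals $0$ then hypothesis~(ii) asserts exactly that $\call|_C\not\simeq\calo_C(K_X)$, i.e. $\call(-K_X)|_C\not\simeq\calo_C$. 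This closes the induction and proves that $\calt$ is $2$-hereditary.

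For the last assertions, suppose $c_1(\call)-K_X$ is effective, represented by an effective divisor $E$. As $X$ is almost Fano, $-K_X$ is nef, so $E.(-K_X)\geq0$, that is $c_1(\call).K_X\leq K_X^2$, which is hypothesis~(i). If in addition $X$ is Fano and the class $c_1(\call)-K_X$ is non-zero, then $-K_X$ is ample and meets the non-zero effective divisor $E$ strictly positively, whence $c_1(\call).K_X<K_X^2$. Finally, suppose $c_1(\call)-K_X\sim\sum_l a_lC_l$ with $a_l\geq1$, the $C_l$ irreducible curves, and at least one $C_{l_0}$ not a $(-2)$-curve; I claim $(-K_X).C_{l_0}>0$. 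It is $\geq0$ by nefness, and were it $0$ then $[C_{l_0}]$ would be a non-zero numerical class orthogonal to $-K_X$, so the Hodge index theorem (valid since $(-K_X)^2=K_X^2>0$) would force $C_{l_0}^2<0$; adjunction with $K_X.C_{l_0}=0$ then gives $C_{l_0}^2=2p_a(C_{l_0})-2$, an even negative integer, forcing $p_a(C_{l_0})=0$ and $C_{l_0}^2=-2$, so that $C_{l_0}$ is a smooth rational curve of self-intersection $-2$ — a $(-2)$-curve, contrary to choice. Hence $(c_1(\call)-K_X).(-K_X)\geq a_{l_0}(-K_X).C_{l_0}>0$, i.e. $c_1(\call).K_X<K_X^2$.

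I expect the only genuinely delicate point to be the organisation of the induction: $H^1$ and $H^2$ of $\call(-mK_X)$ must be handled together, with the anti-canonical curve $C$ serving as the bridge between consecutive twists, and hypotheses~(i) and (ii) are exactly what is needed so that $\deg(\call(-mK_X)|_C)$ is positive for $m\geq2$ (automatic once $K_X^2>0$) and is $\geq0$ — with the restricted bundle moreover non-trivial when that degree is $0$ — for $m=1$. The remaining ingredients (the degree computation on $C$, the elliptic-curve vanishing criterion, the nef- and ample-intersection inequalities, and the adjunction-with-Hodge-index identification of irreducible curves orthogonal to $-K_X$ as $(-2)$-curves) are routine.
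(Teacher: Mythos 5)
Your proof is correct and follows essentially the same route as the paper: induction on the twist via the short exact sequence restricting to the anticanonical elliptic curve $C$, with conditions (i) and (ii) entering exactly through the degree count for $\call(-mK_X)|_C$ (the paper phrases the elliptic-curve vanishing via Serre duality as $H^0(\call^{-1}(rK_X)|_C)=0$, which is the same thing). The only minor difference is at the very end, where you justify that an irreducible curve orthogonal to $-K_X$ must be a $(-2)$-curve by Hodge index plus adjunction, whereas the paper invokes the fact that the anticanonical model contracts precisely the $(-2)$-curves; both justifications are valid.
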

\begin{proof}
 We show by induction on $r$, that $H^i(\call(-rK_X)) = 0$ for all $i>0, r\geq 0, \call \in \mathbb{E}$. This will prove that $\calt$ is 2-hereditary. The case $r=0$ is just the partial tilting condition so we now assume that $r>0$. We use the exact sequence
\begin{multline*} 
H^1(\call(-(r-1)K_X)) \lm H^1(\call(-rK_X)) \lm H^1(\call(-rK_X)|_C) \\ \lm H^2(\call(-(r-1)K_X)) \lm H^2(\call(-rK_X)) \lm 0 .
\end{multline*}
By induction, it suffices to prove that $H^1(\call(-rK_X)|_C) = 0$ for all $r>0, \call \in \mathbb{E}$. Now $\omega_C = \oc(K_X + C) = \oc$ so by Serre duality on $C$, it suffices to show that $H^0(\call^{-1}(rK_X)|_C) = 0$. To check this, note
$$ \deg \call^{-1}(rK_X)|_C = -c_1(\call).(-K_X) -rK_X^2 \leq 0$$
by condition~i) and the fact that $K_X^2 >0$. If this degree is negative, then we are done so we suppose that it is zero. Then we must have $r=1$ and $c_1(\call).K_X = K_X^2$. Now condition~ii) guarantees that $\call^{-1}(K_X)|_C \not\simeq \oc$ so again $H^0(\call^{-1}(K_X)|_C) = 0$. We have thus proved that $\calt$ is 2-hereditary. 

Note that $-K_X$ is nef so $c_1(\call).K_X \leq K_X^2$ when $c_1(\call) -K_X$ is effective. Strict inequality occurs in the Fano case since $-K_X$ is ample. In the almost Fano case, the anti-canonical model contracts all the curves $E$ with $-K_X.E=0$, and these are precisely the (-2)-curves on $X$.
\end{proof}\vs

If $X = \PP^1 \times \PP^1$, then the proposition shows that the commonly used tilting bundles 
\begin{equation}\label{eP1P1bundle}
\calt = \ox \oplus \ox(1,0) \oplus \ox(0,1) \oplus \ox(1,1) \ \text{and}\ \ox \oplus \ox(1,0) \oplus \ox(1,1) \oplus \ox(2,1) 
\end{equation}
are 2-hereditary. Indeed, in this case $\ox(-K_X) = \ox(2,2)$ so $c_1(\call) - K_X$ is effective for all $\call \in \mathbb{E}(\calt)$. 

We look now at the Hirzebruch surface $X = \mathbb{F}_2 = \PP_{\PP^1}(\calo \oplus \calo(-2))$. Recall that it is ruled, say via $\pi: X \lm \PP^1$ and that the relative tautological bundle $\calo_{X/\PP^1}(1) = \ox(C)$ where $C$ is the unique section with negative self-intersection $C^2=-2$. The Picard group is generated by $C$ and a fibre $F$ of $\pi$. Now $X$ is almost Fano but not Fano, so the following shows that our Conjecture~\ref{cmain} will not be true if we replace almost Fano with Fano.

\begin{prop}  \label{pF2}  \mlabel{pF2}
 The bundle $\calt = \ox \oplus \ox(F) \oplus \ox(C+2F) \oplus \oc(C+3F)$ is a 2-hereditary tilting bundle on $X = \mathbb{F}_2$. 
\end{prop}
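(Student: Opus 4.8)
The plan is to verify the three defining properties of a 2-hereditary tilting bundle on $X = \mathbb{F}_2$ in turn: that $\calt$ is locally projective (trivial here, as each summand is a line bundle), that $\calt$ is a tilting bundle (partial-tilting vanishing plus generation), and that the 2-hereditary condition $\Ext^j_X(\calt, \omega_X^{-i} \otimes_X \calt) = 0$ holds for all $i,j \geq 0$. Since $\mathbb{F}_2$ is almost Fano with a smooth anticanonical elliptic curve $C' \in |-K_X|$ (note $-K_X = 2C + 4F$, which is big and nef), the natural tool is Proposition~\ref{pheredDP}: it reduces the 2-hereditary condition to a Chern-class inequality $c_1(\call).K_X \leq K_X^2$ for every $\call \in \mathbb{E}(\calt)$, together with the line-bundle-restriction condition when equality holds. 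Here $K_X^2 = 8$. So first I would list the summands of $\send_X \calt$, namely the differences $\call_j \otimes \call_i^{-1}$ for the four chosen line bundles, compute each $c_1$ in the basis $C, F$, and check whether $c_1(\call) - K_X = c_1(\call) + 2C + 4F$ is effective; by the last sentence of Proposition~\ref{pheredDP}, effectiveness gives $c_1(\call).K_X \leq K_X^2$, with strict inequality unless $c_1(\call) - K_X$ is a sum of $(-2)$-curves only — and the only $(-2)$-curve on $\mathbb{F}_2$ is $C$ itself. One must be slightly careful: the summand involving $\oc(C+3F)$ (I read this as a typo for $\ox(C+3F)$, so that $\calt$ is genuinely a bundle) produces differences such as $(C+3F) - 0 = C + 3F$ and $(C+3F) - (C+2F) = F$ and $(C+3F) - F = C + 2F$, all of which, after adding $2C+4F$, are clearly effective and not supported on $C$ alone; so strict inequality holds and condition~ii) of Proposition~\ref{pheredDP} is vacuous. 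The anti-self-intersection summands (e.g. $-(C+3F)$) are handled by the inverse-closedness and Serre duality already built into Proposition~\ref{pheredDP}'s proof. Thus the main content of the 2-hereditary verification is a short finite Chern-class check.

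Next I would establish that $\calt$ is a tilting bundle. The partial-tilting vanishing $\Ext^j_X(\calt,\calt) = 0$ for $j > 0$ is a special case ($i = 0$) of the computation just done, or can be checked directly by Künneth / the projection formula along $\pi : \mathbb{F}_2 \to \PP^1$, since each difference of summands restricts to $\pi$-fibres in degree $0$ or $1$ and one computes $R\pi_*$ explicitly. For generation of $D^b_c(X)$, I would invoke the known fact that a full strong exceptional collection of line bundles on a Hirzebruch surface generates; concretely, $\ox, \ox(F), \ox(C+2F), \ox(C+3F)$ is (after the standard twist) one of the Beilinson-type collections on $\mathbb{F}_2$, or one reduces to the generation result of Lerner–Oppermann / Section~\ref{sgen} of the paper. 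Alternatively, generation follows because the four classes, together with the relations in $\Pic \mathbb{F}_2$, let one resolve every line bundle and hence every coherent sheaf.

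The step I expect to be the genuine (if minor) obstacle is bookkeeping the intersection numbers on $\mathbb{F}_2$ correctly — getting the signs right for $C^2 = -2$, $C.F = 1$, $F^2 = 0$, hence $K_X = -2C - 4F$ and $K_X^2 = 8$ — and then confirming in each of the finitely many cases that $c_1(\call) - K_X$ is effective and, crucially, not a nonnegative combination of $C$ alone, so that Proposition~\ref{pheredDP}(ii) never needs to be invoked. Once that finite check is clean, the tilting property and the 2-hereditary property both follow, and Proposition~\ref{ptame} additionally gives that $\End_X \calt$ is tame. I would close by remarking that this exhibits a 2-hereditary tilting bundle on a surface that is almost Fano but not Fano, confirming that Conjecture~\ref{cmain} genuinely requires ``almost Fano'' rather than ``Fano''.
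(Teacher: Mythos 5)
Your overall route is the same as the paper's: take the tilting property as known and verify the hypotheses of Proposition~\ref{pheredDP} for $\mathbb{E}(\calt)$ (and yes, $\oc(C+3F)$ is a typo for $\ox(C+3F)$). The genuine gap is your treatment of the ``negative'' elements of $\mathbb{E}(\calt)$. Proposition~\ref{pheredDP} requires conditions i) and ii) for \emph{every} $\call \in \mathbb{E}(\calt)$, and this set is closed under inverses by its very definition; nothing in the proof of that proposition handles the inverses for you. The Serre duality used there is duality on the anticanonical elliptic curve, not on $X$, and condition i) for $\call$ (namely $c_1(\call).K_X \leq K_X^2$) does not imply condition i) for $\call^{-1}$ (namely $-c_1(\call).K_X \leq K_X^2$); in general one of the two can fail while the other holds. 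So checking only the differences $F$, $C+F$, $C+2F$, $C+3F$ does not verify the hypotheses. The missing checks happen to be immediate: for the inverses, $c_1(\call) - K_X$ equals $2C+3F$, $C+3F$, $C+2F$ and $C+F$ respectively, each effective and not a multiple of the unique $(-2)$-curve $C$, so the strict inequality of the last part of Proposition~\ref{pheredDP} holds and condition ii) is vacuous. This is exactly why the paper lists $\mathbb{E}(\calt) = \{\ox, \ox(\pm F), \ox(\pm(C+F)), \ox(\pm(C+2F)), \ox(\pm(C+3F))\}$ with both signs and checks effectiveness for all of them.

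A second, smaller slip: you assert that the partial tilting vanishing is ``a special case ($i=0$)'' of the Chern-class computation, but Proposition~\ref{pheredDP} \emph{assumes} $\calt$ is a quasi-canonical tilting bundle and its induction uses the $r=0$ case, i.e.\ precisely the partial tilting condition, as the base case; so that route is circular. Your fallback — a direct computation of $R\pi_*$ along the ruling, or simply citing King's result that this is a tilting bundle on $\mathbb{F}_2$, which is what the paper does — is what actually carries both the Ext-vanishing and the generation statement. With those two repairs your argument coincides with the paper's proof.
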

\textbf{Remark} This is the same as King's tilting bundle for $\mathbb{F}_2$ \cite[Proposition~6.1, Section~8]{K}, though his notation is different. Its endomorphism algebra is also computed in \cite[p. 9, Case iii)]{K}.
\begin{proof}
Since we know $\calt$ is a tilting bundle, we need only check the conditions in Propositions~\ref{pheredDP}. 
Now $-K_X = 2C+4F$ and 
$$\mathbb{E}(\calt) = \{\ox, \ox(\pm F), \ox(\pm(C+F)), \ox(\pm(C+2F)), \ox(\pm(C+3F))\}$$ 
so for any $\call \in \mathbb{E}(\calt)$ we see that $c_1(\call) - K_X$ is effective and not a multiple of the unique (-2)-curve $C$. 
\end{proof}\vs

We turn now to the question of constructing 2-hereditary tilting bundles on blowups of $\PP^2$. To this end, let $q_1,\ldots,q_r \in \PP^2$ be $r$ points in general position and $f: X \lm \PP^2$ the blowup of $\PP^2$ at these points. Let $\mmm_i \triangleleft \ox$ be the ideal sheaf of $q_i$ and $E_i\subset X$ be the exceptional curve above $q_i$. We will use the abbreviated notation $E_{ij} = E_i + E_j, E_{ijl} = E_i + E_j + E_l$ etc. We let $H\subset X$ be the pullback of a line in $\PP^2$. We need some results on cohomology vanishing. 

\begin{lemma}  \label{lcohomBlP}  \mlabel{lcohomBlP}
If $\call$ is any of the line bundles below, then $H^1(\call) = H^2(\call) = 0$.
\begin{enumerate}
 \item $\call = \ox(E_{12\ldots s} - E_{s+1})$ where $s<r$,
 \item $\call = \ox(H - E_{1\ldots s})$ where $0 \leq s \leq 3$, 
 \item $\call = \ox(2H - E_{1\ldots s})$ where $0 \leq s \leq 6$, 
 \item $\call = \ox(E_{1\ldots s}-tH)$ where $t\leq 2$ 
\end{enumerate}
\end{lemma}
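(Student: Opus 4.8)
The plan is to reduce every computation to the projective plane via the blowup morphism $f\colon X \lm \PP^2$. In each of the four families the line bundle has the shape $\call = f^*\calo_{\PP^2}(a) \otimes_X \ox(\textstyle\sum_i c_i E_i)$ with $a$ an integer that is $\le 2$ and, crucially, every coefficient $c_i$ lying in $\{-1,0,1\}$: in (i) one has $a=0$ and the $c_i$ are $+1$ for $i\le s$, $-1$ for $i=s+1$, and $0$ otherwise; in (ii) and (iii), $a\in\{1,2\}$ and $c_i=-1$ for $i\le s$; in (iv), $a=-t\ge -2$ and $c_i=+1$ for $i\le s$. Since the $q_i$ are honest points in general position, the $E_i$ are pairwise disjoint $(-1)$-curves, so the first step is the purely local computation, around each $q_i$, that $R^jf_*\ox(c_iE_i)=0$ for $j>0$ whenever $c_i\in\{-1,0,1\}$ --- the standard point-blowup fact (which would fail already for $c_i\ge 2$, but that never occurs here). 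By the projection formula this upgrades to $R^jf_*\call=0$ for $j>0$ together with $f_*\call = \calo_{\PP^2}(a)\otimes \mmm_{i_1}\cdots\mmm_{i_k}$, where $\{q_{i_1},\dots,q_{i_k}\}=\{q_i : c_i=-1\}=:Z$; the Leray spectral sequence then gives $H^j(X,\call)\cong H^j(\PP^2, I_Z(a))$, with $I_Z$ the ideal sheaf of $Z$.

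The second step is to dispatch the four cases on $\PP^2$. In case (i), $Z=\{q_{s+1}\}$ (the hypothesis $s<r$ is what makes this legitimate) and $a=0$, so from $0\lm I_Z\lm\calo_{\PP^2}\lm\calo_Z\lm 0$, the vanishing $H^{>0}(\calo_{\PP^2})=0$ and the obvious surjection $H^0(\calo_{\PP^2})\lm H^0(\calo_Z)$ force $H^1=H^2=0$. In case (iv), $Z=\emptyset$ and $a=-t\ge-2$, so $H^1(\PP^2,\calo_{\PP^2}(-t))=0$ automatically and $H^2(\PP^2,\calo_{\PP^2}(-t))\cong H^0(\PP^2,\calo_{\PP^2}(t-3))^*=0$ precisely because $t\le 2$. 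In cases (ii) and (iii), $Z=\{q_1,\dots,q_s\}$ and $a\in\{1,2\}$; twisting the ideal-sheaf sequence by $\calo_{\PP^2}(a)$ and using $H^1(\calo_{\PP^2}(a))=H^2(\calo_{\PP^2}(a))=0$ together with $H^1(\calo_Z)=0$, the only surviving point is surjectivity of the restriction $H^0(\PP^2,\calo_{\PP^2}(a))\lm H^0(Z,\calo_Z)\cong k^{s}$, i.e.\ that $q_1,\dots,q_s$ impose independent conditions on curves of degree $a$.

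This last independence statement is the only real content, and it is exactly where the general-position hypothesis of Definition~\ref{dgenpos} enters. For $a=1$ and $s\le 3$: $h^0(\calo_{\PP^2}(1))=3$ and $s$ points impose independent conditions on lines iff no three of them are collinear (automatic for $s\le 2$), which is part of general position. For $a=2$ and $s\le 6$: $h^0(\calo_{\PP^2}(2))=6$, so failure of independence means $h^0(I_Z(2))>6-s$; for $s\le 5$ this cannot happen once no three points are collinear (the classical fact that at most five points with no three collinear impose independent conditions on conics), and for $s=6$ it would force all six points onto a conic, again excluded. As an alternative to the higher-direct-image machinery, one can prove the lemma by induction on the number of blown-up points, peeling off exceptional curves via $0\lm\call(-E_i)\lm\call\lm\call|_{E_i}\lm 0$ and computing on the $E_i\cong\PP^1$ and on intermediate blowups; the independence facts reappear in the same place. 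I expect no genuine obstacle here: the only care required is the bookkeeping of which $E_i$ carries which coefficient $c_i$ and an honest verification of the two independence facts against Definition~\ref{dgenpos}.
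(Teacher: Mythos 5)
Your proposal is correct and follows essentially the same route as the paper: push forward along $f$, use the vanishing of higher direct images to identify $H^i(\call)$ with $H^i(\PP^2, I_Z(a))$, and then deduce surjectivity of the evaluation map from the general position hypothesis (no three collinear, no six on a conic). The only cosmetic difference is that for case (iii) with $s\le 5$ you invoke the classical independence fact directly, whereas the paper reduces to $s=6$ by adding auxiliary points in general position; both are fine.
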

\begin{proof}
In all cases we have $R^1f_*\call = 0$ so $H^i(\call) = H^i(f_* \call)$. We only present the proof in case~iii) as the others use the same technique and are easier. Consider the exact sequence
$$ 0 \lm f_*\call = \calo_{\PP^2}(2)\mmm_1\mmm_2\ldots\mmm_s \lm \calo_{\PP^2}(2) \lm \oplus_{i=1}^s k(q_i) \lm 0$$
where $k(q_i)$ is the skyscraper sheaf at $q_i$. The associated long exact sequence in cohomology then shows that $H^2(f_*\call) = 0$ and 
$$H^1(f_*\call) = \text{coker}\left( \phi:H^0(\calo_{\PP^2}(2)) \lm \oplus_{i=1}^s k\right).$$
It suffices to show that the map $\phi$ above is surjective, for which it is no loss of generality in assuming that $s=6$, by adding points in general position if necessary. (An easy exercise shows this is indeed always possible). Suppose the statement is false so since $\dim H^0(\calo_{\PP^2}(2)) = 6$ we can find a non-zero $Q \in H^0(\calo_{\PP^2}(2))$ which lies in $\ker \phi$. This means that $Q$ defines a conic in $\PP^2$ which passes through the 6 points $q_1,\ldots, q_6$. This contradicts the assumption that the $q_i$ are in general position. 
\end{proof}\vs

The following theorem, in conjunction with Theorem~\ref{tnec}, shows that Conjecture~\ref{cmain} cannot be too far wrong in the non-weighted case. 
\begin{thm}  \label{tdpbundles} \mlabel{tdpbundles}
 Let $X$ be the blowup of $\PP^2$ at $r\leq 6$ points in general position. Then the bundle $\calt$ below is a 2-hereditary tilting bundle on $X$.
\begin{enumerate}
 \item If $r=1$ then $\calt = \ox(E_1) \oplus \ox(H) \oplus \ox(H+E_1) \oplus \ox(2H)$,
 \item if $r=2$ then $\calt = \ox(E_{12}) \oplus \ox(H+E_1) \oplus \ox(H+E_2) \oplus \ox(H+E_{12}) \oplus \ox(2H)$,
 \item if $3 \leq r \leq 6$ then 
\begin{multline*}
 \calt = \ox(E_{123}) \oplus \ox(E_{1234})\oplus \ldots  \oplus \ox(E_{123r}) \\ \oplus \ox(H+E_{23}) \oplus \ox(H+E_{13}) \oplus \ox(H+E_{12}) \oplus \ox(H+E_{123}) \oplus \ox(2H).
\end{multline*}
 \end{enumerate}
\end{thm}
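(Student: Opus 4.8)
The plan is to check the three defining properties in turn — generation, vanishing of higher self-extensions, and the $2$-hereditary condition — exploiting two simplifications: permuting the points $q_i$ preserves ``general position'', which collapses the casework, and twisting $\calt$ by a fixed line bundle alters none of the three conditions, so each $\calt$ may be normalised conveniently.

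First I would dispose of generation. By the Lerner--Oppermann generation theorem reproved in Section~\ref{sgen} (concretely, by iterating Orlov's blowup formula) $D^b_c(X)$ is generated by $\ox, \ox(H), \ox(2H)$ together with the sheaves $\calo_{E_i}(-1)$, $i=1,\dots,r$, so it suffices to produce each of these inside the thick subcategory $\langle\calt\rangle$ from the summands of $\calt$ via short exact sequences. The staircase summands give $0 \to \ox(E_{1\cdots k}) \to \ox(E_{1\cdots k+1}) \to \calo_{E_{k+1}}(-1) \to 0$ (using $E_j\cdot E_{k+1}=0$ for $j\neq k+1$ and $E_{k+1}^2=-1$), and $\ox(H+E_{ij}) \hookrightarrow \ox(H+E_{123})$ has cokernel $\calo_{E_l}(-1)$ with $\{i,j,l\}=\{1,2,3\}$; together these put all $\calo_{E_i}(-1)$ into $\langle\calt\rangle$. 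Peeling off one exceptional curve at a time then gives $\ox\in\langle\ox(E_{123}),\calo_{E_1}(-1),\calo_{E_2}(-1),\calo_{E_3}(-1)\rangle$ and $\ox(H)\in\langle\ox(H+E_{23}),\calo_{E_2}(-1),\calo_{E_3}(-1)\rangle$, while $\ox(2H)$ is a summand outright. The cases $r=1,2$ are entirely analogous.

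For the partial tilting condition $\Ext^{>0}_X(\calt,\calt)=0$ I would enumerate $\mathbb{E}(\calt)$, the set of line bundles $\call_i^{-1}\otimes_X\call_j$, and check that each member, after permuting the $q_i$ and, where needed, replacing it by its inverse, occurs in the list of Lemma~\ref{lcohomBlP}; that lemma then gives $H^1(\call)=H^2(\call)=0$ for all $\call\in\mathbb{E}(\calt)$, so that, combined with generation, $\calt$ is a tilting bundle. For the $2$-hereditary property I would apply Proposition~\ref{pheredDP}: $X$ is del Pezzo of degree $K_X^2=9-r\geq 3>0$, hence Fano, with a smooth elliptic $C\in|-K_X|$. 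Every $c_1(\call)$ for $\call\in\mathbb{E}(\calt)$ has $H$-coefficient in $\{-2,-1,0,1,2\}$, whereas $K_X=-3H+\sum_iE_i$ has $H$-coefficient $-3$, so $c_1(\call)\neq K_X$; hence by the Fano clause of Proposition~\ref{pheredDP} it is enough to verify that $c_1(\call)-K_X$ is effective for each $\call\in\mathbb{E}(\calt)$ — this forces $c_1(\call)\cdot K_X<K_X^2$ strictly and makes condition~(ii) of that proposition vacuous. Writing $c_1(\call)-K_X=dH-\sum_i m_iE_i$, effectivity follows by producing a degree-$d$ plane curve through each $q_i$ with $m_i>0$ to multiplicity $m_i$ — which exists by a dimension count together with general position — and then adjoining the exceptional curves $\sum_{m_i<0}|m_i|E_i$.

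The main obstacle is the middle step: organising the rather large set $\mathbb{E}(\calt)$ and identifying each of its elements with an entry of Lemma~\ref{lcohomBlP} after reindexing, while simultaneously checking that the short exact sequences of the first step genuinely exhaust a generating set. Once that bookkeeping is set up, the remaining work is routine — each verification is either a cohomology computation on $\PP^2$ obtained by pushing forward from $X$, or a count of plane curves through points in general position.
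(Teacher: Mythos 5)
Your generation and partial-tilting steps follow the paper's own route (kernels/cokernels of maps between summands to reach $\ox,\ox(H),\ox(2H),\calo_{E_i}(-1)$, then Lemma~\ref{lcohomBlP} for $\mathbb{E}(\calt)$), and they are fine. The gap is in the 2-hereditary step, and it occurs exactly in the hardest case $r=6$. Your claim that $c_1(\call)-K_X$ is effective for every $\call\in\mathbb{E}(\calt)$ is false there: the extreme elements of $\mathbb{E}(\calt)$ have $c_1(\call)=-H$ (e.g.\ $\ox(E_{123})$ against $\ox(H+E_{123})$) and $c_1(\call)=E_{123}-2H$ (against $\ox(2H)$), and with $K_X=-3H+E_{1\ldots6}$ one gets $c_1(\call)-K_X = 2H-E_{123456}$, resp.\ $H-E_{456}$. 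Effectivity of these would require a conic through all six points, resp.\ a line through three of them -- both are excluded precisely by the general position hypothesis, so your ``dimension count together with general position'' works against you here rather than for you. Consequently condition~(ii) of Proposition~\ref{pheredDP} is \emph{not} vacuous: in these boundary cases $c_1(\call).K_X = 3 = K_X^2$ exactly, and one must genuinely check that $\call|_C \not\simeq \oc(K_X)$ on an anticanonical cubic $C$.

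The paper handles this by showing that $\oc(K_X)\simeq \oc(-H)$ would force $\oc(2)\simeq\oc(q_1+\cdots+q_6)$; since $H^1(\calo_{\PP^2}(-1))=0$, the restriction $H^0(\calo_{\PP^2}(2))\to H^0(\oc(2))$ is surjective, producing a conic through the six points and contradicting general position, with the analogous degree-one argument ruling out the case $c_1(\call)=E_{123}-2H$ via the non-collinearity of $q_4,q_5,q_6$. (For $r\leq 5$ your effectivity argument does go through, since a conic through five general points and a line through two points exist, so your shortcut is only broken at $r=6$.) To repair your proof you need to add this boundary analysis: identify the $\call\in\mathbb{E}(\calt)$ with $c_1(\call).K_X = K_X^2$ and verify condition~(ii) of Proposition~\ref{pheredDP} for them directly, rather than declaring it vacuous.
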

\textbf{Remark} In the cases, $r=1,2,3$ these are (up to shift) the same as the tilting bundles given in \cite[Propositions~6.1,6.2]{K}. The endomorphism rings were computed in \cite[Section~6, cases~ii), iv),v)]{K}
\begin{proof}
 We only present the case $r=6$ as the others can essentially be extracted from this one and use the same technique. The partial tilting condition $H^i(\call) = 0$ for all $i>0, \call \in \mathbb{E}(\calt)$ is easily checked using Lemma~\ref{lcohomBlP}. To see that the summands of $\calt$ generate the derived category, we follow the argument in \cite[Proposition~2.2]{HP}. Note first that $D^b_c(X)$ is generated by $\ox,\ox(H),\ox(2H),\calo_{E_1}(-1), \ldots, \calo_{E_6}(-1)$ so it suffices to show that the category ${\mathsf C}$ generated by $\calt$ contains these. Now one easily sees $\calo_{E_i}(-1) \in \mathsf{C}$. For example, picking any non-zero map $\psi: \ox(H+E_{12}) \lm \ox(H+E_{123})$ we find $\calo_{E_3}(-1) = \coker \psi \in {\mathsf C}$ and similarly for the others. To show $\ox,\ox(H) \in {\mathsf C}$ we can look at kernels, for example as follows. Note that $\ox(H+E_{12})|_{E_2} = \calo_{E_2}(-1)$ so 
$${\mathsf C} \ni \ker( \ox(H+E_{12}) \lm \ox(H+E_{12})|_{E_2}) = \ox(H+E_1).$$
Restricting to $E_1$ and repeating the argument shows that $\ox(H) \in {\mathsf C}$. A similar argument works for $\ox$ so $\calt$ is indeed a tilting bundle. 

It remains now only to check the conditions in Proposition~\ref{pheredDP}. It is useful to partially order divisor classes on $X$ by $D \leq D'$ if and only if $D'-D$ is linearly equivalent to an effective divisor. We bound below $c_1(\call)$ where $\call \in \mathbb{E}(\calt)$. We first bound the first Chern classes of the summands of $\calt$. Note that the maximal elements here are $H+E_{123}$ and  $2H$. Indeed, we may pass a conic through any 4 points so $2H-E_{123s}$ is effective and similarly so is $H-E_{ij}$. The same reasoning shows that the unique minimal element is $E_{123}$. Hence 
\begin{equation}  \label{edpbundles}
c_1(\call) \geq \quad \text{either \ a)} -H \ \ \text{or b)}\ E_{123} -2H 
\end{equation}
 Now $K_X = -3H +  E_{1\ldots 6}$ so in both cases we find $c_1(\call).K_X = 3 = K_X^2$. Since $-K_X$ is ample, we have $c_1(\call).K_X < K_X^2$ in all other cases. We need only now verify condition~ii) of Proposition~\ref{pheredDP}. To this end, consider a cubic curve $C$ passing through $q_1,\ldots, q_6$. We consider case a) first and suppose to the contrary that $\oc(K_X) \simeq \oc(-H)$, or equivalently, that $\oc(2) \simeq \oc(q_1 + \ldots q_6)$. Now $H^1(\calo_{\PP^2}(-1)) = 0$ so $H^0(\calo_{\PP^2}(2)) \lm H^0(\oc(2))$ is surjective and we may find a non-zero $Q \in  H^0(\calo_{\PP^2}(2))$ which defines a conic passing through $q_1,\ldots, q_6$. This contradicts the fact that they are in general position. We may similarly dispose of case b) using the fact that $q_4,q_5,q_6$ are not collinear. This completes the proof of the theorem.  
\end{proof}\vs

We have looked at one last case which lends further evidence to our main conjecture.

\begin{prop} \label{pcollinear}  \mlabel{pcollinear}
 Let $X$ be the blowup of $\PP^2$ at 3 collinear points. Then 
$$\calt =  \ox(E_1) \oplus \ox(E_{12}) \oplus \ox(E_{13}) \oplus \ox(H) \oplus \ox(H+E_1) \oplus \ox(2H)$$
is a 2-hereditary tilting bundle on $X$
\end{prop}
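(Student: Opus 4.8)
The plan is to run the arguments behind Theorem~\ref{tdpbundles} and Proposition~\ref{pheredDP} almost unchanged; the only point to watch is that the collinearity of $q_1,q_2,q_3$ creates a $(-2)$-curve, but this curve never interferes with the particular summands of $\calt$. Write $H$ for the pullback of a line and $E_1,E_2,E_3$ for the exceptional curves, so that $K_X = -3H + E_{123}$ and $K_X^2 = 6$. The strict transform $\hat{L} = H - E_{123}$ of the common line of the three points is a $(-2)$-curve and $-K_X\cdot\hat{L} = 0$, but $-K_X$ is nef (this is its only zero on an irreducible curve; e.g.\ an irreducible conic cannot contain three collinear points), and $(-K_X)^2 = 6 > 0$, so $-K_X$ is big and nef. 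Hence $X$ is an almost Fano surface, Proposition~\ref{pheredDP} applies, and there is a smooth elliptic curve $C\in|-K_X|$.

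For the partial tilting condition I would enumerate $\mathbb{E}(\calt)$: up to inverses its elements have first Chern class one of $0$, $E_i$, $E_2-E_3$, $H-E_1$, $H-E_2$, $H-E_3$, $H-E_{12}$, $H-E_{13}$, $H$, $2H-E_1$, $2H-E_{12}$, $2H-E_{13}$. Each of these is either supported on the exceptional locus or of the form $\pm(tH-E_S)$ with $t\le 2$ and $|S|\le 2$, so $H^1=H^2=0$ follows exactly as in Lemma~\ref{lcohomBlP}: push forward to $\PP^2$, where $R^1f_*$ of the relevant sheaf vanishes, and use that a line (resp.\ conic) through at most two prescribed points always exists. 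Since no class in $\mathbb{E}(\calt)$ involves all three $q_i$ at once, general position is never needed and the collinearity is immaterial here.

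For generation I would reproduce the Hille--Perling argument in the proof of Theorem~\ref{tdpbundles}: $D^b_c(X)$ is generated by $\ox,\ox(H),\ox(2H),\calo_{E_1}(-1),\calo_{E_2}(-1),\calo_{E_3}(-1)$, so it suffices to put these in the thick subcategory $\mathsf{C}$ generated by $\calt$. Now $\ox(H),\ox(2H)$ are summands of $\calt$; a non-zero map $\ox(E_1)\to\ox(E_{12})$ has cokernel $\calo_{E_2}(-1)$, a non-zero map $\ox(E_1)\to\ox(E_{13})$ has cokernel $\calo_{E_3}(-1)$, and a non-zero map $\ox(H)\to\ox(H+E_1)$ has cokernel $\calo_{E_1}(-1)$, so all three of these sheaves lie in $\mathsf{C}$; finally $\ox = \ker(\ox(E_1)\to\calo_{E_1}(-1))\in\mathsf{C}$. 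Hence $\mathsf{C} = D^b_c(X)$ and, with the previous paragraph, $\calt$ is a tilting bundle.

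It remains to check the two conditions of Proposition~\ref{pheredDP}. Using $c_1\cdot K_X = -3a - \sum_i b_i$ for $c_1 = aH + \sum_i b_iE_i$, one computes that $c_1(\call)\cdot K_X$ takes only the values $0,\pm1,\dots,\pm5$ as $\call$ ranges over $\mathbb{E}(\calt)$; in particular $c_1(\call)\cdot K_X \le 5 < 6 = K_X^2$ for every $\call\in\mathbb{E}(\calt)$, so condition (i) holds and condition (ii) is vacuous. By Proposition~\ref{pheredDP}, $\calt$ is $2$-hereditary. There is no genuinely hard step; the essential observation is that the summands of $\calt$ are arranged so that no class of $\mathbb{E}(\calt)$ meets all three points, which is exactly what keeps the cohomology vanishing of Lemma~\ref{lcohomBlP} and the numerical bound in Proposition~\ref{pheredDP} in force despite the collinearity. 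The only step needing real care is the bookkeeping of $\mathbb{E}(\calt)$ together with the intersection numbers $c_1(\call)\cdot K_X$.
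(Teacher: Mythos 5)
Your proposal is correct and follows essentially the same route as the paper: generation exactly as in Theorem~\ref{tdpbundles}, partial tilting by pushing the classes of $\mathbb{E}(\calt)$ down (the paper blows down only the one unused exceptional curve and quotes Lemma~\ref{lcohomBlP} on the two-point blowup, while you push all the way to $\PP^2$ and redo the lemma's argument --- the same computation), and Proposition~\ref{pheredDP} for 2-heredity. Your direct enumeration giving $c_1(\call).K_X \leq 5 < 6 = K_X^2$ for all $\call \in \mathbb{E}(\calt)$ replaces the paper's lower bound $c_1(\call) - K_X \geq H - E_{23}$ (effective and not a multiple of the unique $(-2)$-curve), with the same conclusion.
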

\begin{proof}
As in the proof of Theorem~\ref{tdpbundles} we see that $\calt$ generates the derived category. To check the partial tilting condition, we need to compute $H^i(\call)$ as $\call$ ranges over $\mathbb{E}(\calt)$. In this case, the possible $\call$ involve at most two of the exceptional curves, so blowing down the remaining one via say $f:X \lm \bar{X}$ we see $H^i(\call) = H^i(f_* \call)$. We may now invoke  Lemma~\ref{lcohomBlP} on $\bar{X}$ to show $\calt$ is indeed a tilting bundle. It remains now to verify the conditions in Proposition~\ref{pheredDP}. Now $c_1(\call)$ is bounded below by $E_1 - 2H$ so $c_1(\call) - K_X$ is bounded below by 
$$ E_1 - 2H + 3H - E_{123} = H - E_{23}.$$
This is effective and not a multiple of the unique (-2)-curve $E$, since $E \sim H - E_{123}$. 
\end{proof}

\section{$n$-hereditary tilting bundles associated to group quotients}  \label{sgroup}  \mlabel{sgroup}

We now construct some $n$-hereditary tilting bundles on groups quotients using the skew group algebra. 

Let $\pi:\Xt \lm X$ be a (ramified) Galois cover of smooth projective varieties with finite Galois group $G$. By this we mean that $G$ acts faithfully on $\Xt$ and that $\pi$ exhibits the scheme-theoretic quotient of $\Xt$ by $G$. Now $\pi_*\oxt$ is a sheaf of algebras on $X$, and $G$ acts on this sheaf of algebras so we may form the skew group algebra $A = \pi_* \oxt \# G$. We will abuse notation and denote this by $\oxt \# G$. Note that this is even an order, for $A$ acts faithfully on the locally free sheaf $\pi_*\oxt$ so embeds in $\send_X \pi_* \oxt$. Furthermore, Maschke's theorem ensures that locally at any closed point of $X$, $A$ has global dimension $n = \dim \Xt$. 

These skew group algebras often give rise to GL-orders as follows. Let $D_1,\ldots, D_r \subset X$ be the ramification divisors and $p_1,\ldots,p_r$ be the corresponding ramification indices. We assume these are simple normal crossing so the natural weighted projective variety to associate to these data is $(X,\sum(1 - \frac{1}{p_i})D_i)$. We wish to show that under favourable circumstances, $\calo_{\Xt} \# G$ is Morita equivalent to the corresponding standard GL-order. To this end, let $\Dt_1,\ldots, \Dt_r$ be the reduced inverses images of the $D_i$ so that $\pi^* D_i = p_i\Dt_i$. Note that $\Dt_i$ is $G$-invariant so $\calo_{\Xt}(-\Dt_i)A$ defines a two-sided ideal of $A$ and we may view $\calo_{\Xt}(\Dt_i) \otimes_{\Xt} A$ as an invertible $A$-bimodule. Now $\oxt$ is a locally projective $A$-module, being a summand of $A$, so tensoring by these invertible bimodules gives many more. Recall from \cite[Proposition~6.7]{AZ}, that a {\em local progenerator} for $A$ is an $A$-module $\mathcal{P}$ such that on any affine open $U \subset X$, the module of sections $\mathcal{P}(U)$ is a progenerator for $A(U)$. As in the classical ring-theoretic Morita theory, we have that $A$ and $\send_A \mathcal{P}$ are Morita equivalent via $\shom_A(\mathcal{P}, -)$. In our case, the natural candidate for $\mathcal{P}$ is the locally projective $A$-module  
\begin{equation}  \label{eMoritabundle}
 \mathcal{P} = \bigoplus_{i=1}^r \bigoplus_{j_i = 0}^{p_i-1} \oxt(-\sum j_i \Dt_i) .
\end{equation}

Let $\Dt^0_i$ be a component of $\Dt_i$. There is a corresponding {\em inertia group}  
$$ H_i = \{ h \in G | h.x = x \ \text{for all } x \in \Dt^0_i  \}$$
and the inertia groups of other components of $\Dt_i$ are the conjugates of this $H_i$. 

Let $\tilde{q} \in \Xt$ and $H$ be the stabiliser $\text{Stab}_G (\tilde{q})$. Then $\oxt(-\Dt_i) \otimes_{\Xt} k(\tilde{q})$ is a 1-dimensional $H$-module so corresponds to a character of $H$. This character is trivial if $\tilde{q} \notin \Dt_i$. 

\begin{thm}  \label{tskewisGL}  \mlabel{tskewisGL}
Let $\mathcal{P}$ be the locally projective $A$-module in Equation~(\ref{eMoritabundle}). Then 
$\send_A \mathcal{P}$  is the standard GL-order $T_{p_1}(D_1) \otimes_X \ldots \otimes T_{p_r}(D_r)$. Suppose that for any singular point $\tilde{q}$ of $\cup \Dt_i$, the following holds:
\begin{itemize}
 \item[(*)] the stabiliser $H = \text{Stab}_G\, (\tilde{q})$ is abelian and its character group $H^*$ is generated by the characters $\oxt(-\Dt_i) \otimes_{\Xt} k(\tilde{q})$ for  $i = 1,\ldots, r$. 
\end{itemize}
Then $\mathcal{P}$ is a locally projective generator so the skew group algebra $A = \oxt\# G$ is a GL-order. 
\end{thm}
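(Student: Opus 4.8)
The plan is to establish two essentially independent assertions: first, the purely formal identification $\send_A \mathcal{P} \cong T_{p_1}(D_1) \otimes_X \cdots \otimes_X T_{p_r}(D_r)$, which does not require hypothesis (*); and second, that the hypothesis (*) guarantees $\mathcal{P}$ is a local progenerator, whence $A = \oxt \# G$ is Morita equivalent to this standard GL-order and so is itself a GL-order.

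For the first assertion, I would compute $\shom_A$ between the summands $\oxt(-\sum_i j_i \Dt_i)$ of $\mathcal{P}$. Since these are obtained from $\oxt$ by tensoring with the invertible $A$-bimodules $\oxt(\Dt_i) \otimes_{\Xt} A$, and $\shom_A(\oxt, \oxt) = \ox$ (as $\oxt^G = \ox$ with the skew group action), the homomorphism sheaf $\shom_A(\oxt(-\sum j_i \Dt_i), \oxt(-\sum j'_i \Dt_i))$ is the $G$-invariant part of $\oxt(\sum (j_i - j'_i)\Dt_i)$, which one checks equals $\ox$ if $j_i \geq j'_i$ for all $i$ and $\ox(-D_i)$ for each index where $j'_i > j_i$ — precisely because $\pi^* D_i = p_i \Dt_i$ and $0 \le j_i, j'_i \le p_i - 1$, so the only way to descend a section with a pole along $\Dt_i$ is to allow a zero of order $p_i$ downstairs, i.e.\ a section of $\ox(-D_i)$. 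Assembling these blocks over the lexicographically ordered index set $\{(j_1,\dots,j_r)\}$ reproduces exactly the matrix-of-ideals description of $T_{p_1}(D_1)\otimes_X \cdots \otimes_X T_{p_r}(D_r)$. This is a routine but slightly bookkeeping-heavy calculation; the tensor product structure over $X$ makes it factor through the one-divisor case, which is essentially \cite[Section~2]{IL}.

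For the second assertion — that $\mathcal{P}$ is a local progenerator under (*) — since $\mathcal{P}$ is already locally projective, the content is that $\mathcal{P}_{\tilde q}$ is a \emph{generator} of $A_{\tilde q}$-modules at each closed point, equivalently that every indecomposable projective $A_{\tilde q}$-module occurs as a summand of $\mathcal{P}_{\tilde q}$. Away from $\cup \Dt_i$ the cover is \'etale and $A$ is Azumaya, so any nonzero locally projective module is a progenerator there; near a smooth point of a single $\Dt_i$ one reduces to a cyclic quotient and the $p_i$ summands $\oxt(-j_i \Dt_i)$, $0 \le j_i < p_i$, visibly hit all the characters of the inertia group $\Z/p_i$. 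The only delicate locus is a singular point $\tilde q$ of $\cup \Dt_i$, where complete-locally $A_{\tilde q} \cong k[[x_1,\dots,x_n]] \# H$ with $H = \mathrm{Stab}_G(\tilde q)$ acting (by (*)) as an abelian group via the characters $\chi_i := \oxt(-\Dt_i)\otimes k(\tilde q)$. The indecomposable projectives of a skew group algebra $k[[\underline x]] \# H$ with $H$ abelian are indexed by $H^* = \mathrm{Hom}(H, k^*)$, and the summand $\oxt(-\sum j_i \Dt_i)$ of $\mathcal{P}$ has the character $\prod_i \chi_i^{j_i}$; as $(j_1,\dots,j_r)$ ranges over $\prod_i \{0,\dots,p_i-1\}$ and the $\chi_i$ generate $H^*$ by (*), every character of $H$ is realized, so $\mathcal{P}_{\tilde q}$ contains every indecomposable projective and is a generator. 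Thus $\mathcal{P}$ is a local progenerator, $A$ and $\send_A \mathcal{P}$ are Morita equivalent via $\shom_A(\mathcal{P},-)$ by the sheaf-theoretic Morita theory of \cite[Proposition~6.7]{AZ}, and combining with the first assertion, $A = \oxt \# G$ is Morita equivalent to the standard GL-order, hence a GL-order.

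The main obstacle is the computation at the singular points $\tilde q$: one must pass to the complete local ring, identify $A_{\tilde q}$ with $k[[\underline x]] \# H$, and correctly match the $\O_{\Xt}$-twists defining the summands of $\mathcal{P}$ with characters of $H$ — in particular verifying that the character attached to $\oxt(-\Dt_i)$ is exactly $\chi_i$ (so that (*) says precisely ``the summands of $\mathcal{P}$ exhaust the character group''), and knowing that for an abelian $H$ the indecomposable projectives of $k[[\underline x]]\#H$ are in bijection with $H^*$. Everything else is either the formal Morita reduction or the divisor-by-divisor $\shom$ computation of the first part.
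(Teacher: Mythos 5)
Your two-part strategy matches the paper's, and your treatment of the main case is a legitimate variant: the paper also reduces the identification $\send_A\mathcal{P}\cong T_{p_1}(D_1)\otimes_X\cdots\otimes_X T_{p_r}(D_r)$ to the formulas $\shom_A(-,-)=\shom_{\Xt}(-,-)^G$ and $\oxt(\sum j_i\Dt_i)^G=\ox(\sum\lfloor\tfrac{j_i}{p_i}\rfloor D_i)$ (and likewise omits the bookkeeping), while for generation at a point with stabiliser $H$ it counts simple modules of the semisimple algebra $\calo_Q\#G$ (at most $|H|$ by Wedderburn) and shows $\mathcal{P}$ surjects onto $|H|$ pairwise non-isomorphic ones; your count of indecomposable projectives of the complete local skew group algebra $\hat{\calo}_{\Xt,\tilde q}\#H$ by characters of $H$ achieves the same thing.

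The genuine gap is at the \emph{smooth} points of $\cup\Dt_i$. Hypothesis (*) is only imposed at singular points, so at a smooth point $\tilde q$ of the ramification locus something must be proved, and your phrase ``one reduces to a cyclic quotient \ldots visibly hit all the characters of the inertia group $\Z/p_i$'' assumes exactly the point at issue: that the full stabiliser $H=\text{Stab}_G(\tilde q)$ coincides with the cyclic inertia group $H_i$ of the component of $\Dt_i$ through $\tilde q$ (whose conormal character generates $H_i^*$). A priori $H$ could be strictly larger than $H_i$; then the indecomposable projectives of $\hat{\calo}_{\Xt,\tilde q}\#H$ are indexed by $H^*$ and the summands $\oxt(-j\Dt_i)$ only realise the subgroup generated by one character, so your generator argument would not close. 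The paper spends a paragraph on precisely this: it factors $\pi$ through $\Xt/H_i$, observes that $\Xt/H_i\lm X$ is generically unramified along the image of $\Dt_i^0$, and invokes purity of the branch locus together with completeness of the local ring to conclude $\hat{R}^{H_i}=R$, hence $H=H_i$. (Alternatively one could argue via Chevalley--Shephard--Todd, using smoothness of $X$, that $H$ is generated by pseudo-reflections whose fixed divisors all pass through $\tilde q$ inside $\cup\Dt_j$, hence equal $\Dt_i$, forcing $H$ cyclic.) Some such argument is needed and is missing from your proposal. A smaller implicit point at the singular locus: to conclude that the exponents $0\le j_i\le p_i-1$ occurring in $\mathcal{P}$ exhaust the group generated by the $\chi_i$, you need $\chi_i^{p_i}=1$, which holds because $\oxt(-p_i\Dt_i)=\pi^*\ox(-D_i)$ and $H$ acts trivially on the fibre of a pullback; this deserves a sentence.
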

\textbf{Remark} In particular, if $\cup \Dt_i$ is smooth as happens when $X$ is a curve, we obtain a GL-order automatically. 
\begin{proof}
The identification of $\send_A \mathcal{P}$ with the standard GL-order is an easy exercise we omit using the fact that $\shom_A(-,-) = \shom_{\Xt}(-,-)^G$ and $\oxt(\sum j_i \Dt_i)^G = \ox(\lfloor \frac{j_i}{p_i}\rfloor D_i)$ for all integers $j_i$. 

Note that (*) holds if $\tilde{q}$ is an unramified point, for then $\text{Stab}_G (\tilde{q}) = 1$. We show it also holds at all smooth points $\tilde{q}$ of $\cup \Dt_i$. Indeed, suppose that $\tilde{q}$ lies in the component $\Dt_i^0$ of $\Dt_i$ and that $H_i$ is the inertia group of $\Dt_i^0$. It suffices to show that $H := \text{Stab}_G (\tilde{q}) = H_i$ for the inertia group is cyclic and acts on the conormal bundle of $\Dt_i$ by a character which generates $H_i^*$. We may factor 
$$\pi: \Xt \xlm{\pi'} \Xt/H_i \xlm{\pi''} X .$$
Now $H_i$ is the generic stabiliser of points on $\Dt_i^0$ so $\pi'': \Xt/H_i \lm X$ is generically unramified along $\pi'(\Dt_i^0)$. Purity of the branch locus ensures that it is unramified at $\pi'(\tilde{q})$ too. Let $\hat{R}$ be the complete local ring of $\Xt$ at $\tilde{q}$ and $R$ be the complete local ring of $X$ at $\pi(\tilde{q})$. Note that $H \supseteq H_i$ acts on $\hat{R}$ and in fact, $\hat{R}/R$ is Galois with Galois group $H$. Since $\hat{R}^{H_i}/R$ is unramified, we must have $H=H_i$. 

It suffices now to assume (*) holds for all closed points  $\tilde{q} \in \Xt$ and show that $\mathcal{P}$ is a generator in the sense that it surjects onto any simple $A$-module. Now any simple $A$-module $M$ has a central character determined by a closed point $q \in X$ and so is an $A \otimes_X k(q)$-module. Let $\tilde{q}\in \Xt$ be any point lying above $q$ and $Q = G.\tilde{q}$ be its $G$-orbit. We see that $A \otimes_X k(q) = \pi^* k(q) \# G$ has a nilpotent ideal generated by $\calo_{\Xt}(-Q)$ so $M$ is even a module over $\calo_Q \# G$. Now $\calo_Q$ is semisimple, being the direct product of $|G/H|$ copies of $k$ so $\calo_Q\# G$ is semisimple too. For each left coset $C$ of $H = \text{Stab}_G (\tilde{q})$ in $G$, there is a corresponding idempotent $e_C \in \calo_Q$ which is 1 at $C.\tilde{q}$ and 0 on the rest of $Q$. We may thus decompose $M$ as a direct sum of the components $e_CM$. Now $G$ permutes these components, so $\dim_k M \geq |G/H|$ and Wedderburn theory tells us there are at most $|H|$ non-isomorphic $\calo_Q\# G$-modules. It thus suffices to show that $\mathcal{P}$ surjects onto $|H|$ non-isomorphic simple modules supported at $q$. By construction, $\mathcal{P}$ surjects onto the simple modules of the form 
\begin{equation}  \label{esimples}
\oxt(-\sum_i j_i \Dt_i) \otimes_{\Xt} \calo_Q  
\end{equation}
Now $\oxt(-\Dt_i) \otimes_X -$ alters the $H$-module $e_H M$ by a character $\chi_i \in H^*$ and (*) ensures that the $\chi_i$ generate $H^*$. Hence there are at least $|H|$ non-isomorphic simples of the form (\ref{esimples}) and the theorem is proved.  
\end{proof}
\vs

Let $n$ be the dimension of $X$. We now look at the question of producing $n$-hereditary tilting bundles for the order $A = \oxt \# G$, or equivalently, the quotient stack $[\Xt/G]$. 
\begin{defn} \label{dGstable}  \mlabel{dGstable}
We say that a bundle $\calt$ on $\Xt$ is {\em $G$-stable} if $g^* \calt \simeq \calt$ for all $g \in G$. 
\end{defn}
We have an easy

\begin{prop}  \label{pGstabletilt}  \mlabel{pGstabletilt}
Let $\calt$ be a $G$-stable $n$-hereditary tilting bundle on $\Xt$. Then $A \otimes_{\Xt} \calt$ is an $n$-hereditary tilting bundle on $A$. 
\end{prop}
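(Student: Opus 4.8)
The plan is to work on the quotient stack $[\Xt/G]$, whose category of coherent sheaves is the category $\Coh^G(\Xt)$ of $G$-equivariant coherent sheaves on $\Xt$ and which the paper has already identified with $A-\mo$. Under this identification the induced $A$-module $A\otimes_{\Xt}\calt$ becomes the $G$-equivariant bundle $\bigoplus_{g\in G}g^*\calt$, with $G$ permuting the summands. Write $\mathrm{ind}=A\otimes_{\Xt}(-)$ for induction of $\oxt$-modules to $A$-modules and $\mathrm{res}$ for restriction of scalars along $\oxt\hookrightarrow A$. Since $A$ is locally free over $\oxt$, both functors are exact, $\mathrm{ind}$ is left adjoint to $\mathrm{res}$, and $\mathrm{res}\,\mathrm{ind}\,\mathcal F\cong\bigoplus_{g}g^*\mathcal F$; moreover, as $k$ has characteristic zero, Maschke's theorem (in the form already used for Proposition~\ref{pGLorders}) makes every $A$-module a direct summand of $\mathrm{ind}\,\mathrm{res}$ of itself. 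Setting $\calt':=A\otimes_{\Xt}\calt$, I would check the three defining properties of an $n$-hereditary tilting bundle for $\calt'$.

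Local projectivity is immediate: $\calt$ is locally free over $\oxt$, so $\calt'$ is locally a direct summand of a finite direct sum of copies of $A$. For generation of $D^b_c(A)$: given $N\in A-\mo$, the complex $\mathrm{res}\,N$ lies in the thick subcategory of $D^b_c(\Xt)$ generated by $\calt$, since $\calt$ is a tilting bundle on $\Xt$; applying the exact functor $\mathrm{ind}$ shows $\mathrm{ind}\,\mathrm{res}\,N$ lies in the thick subcategory generated by $\calt'$, and since $N$ is a direct summand of $\mathrm{ind}\,\mathrm{res}\,N$ it does too. Hence $\calt'$ generates $D^b_c(A)$.

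The main point is the $\Ext$-vanishing. Under the identification $A-\mo\simeq\Coh^G(\Xt)$, the autoequivalence $\omega_A^{\otimes -i}\otimes_A(-)$ becomes $\omega_{\Xt}^{\otimes -i}\otimes_{\Xt}(-)$: by Proposition~\ref{pSerredual} it is the $(-i)$-th power of the Serre functor shifted by $[-n]$, and the Serre functor of $\Coh^G(\Xt)$ is tensoring by the equivariant bundle $\omega_{\Xt}$ (equivalently, this is the ramification identity $\pi^*(K_X+\Delta)=K_{\Xt}$). Forgetting the $G$-action therefore gives $\mathrm{res}(\omega_A^{\otimes -i}\otimes_A\calt')\cong\omega_{\Xt}^{\otimes -i}\otimes_{\Xt}\bigoplus_{g}g^*\calt$. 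Hence, whenever $\Ext^j_{\Xt}(\calt,\omega_{\Xt}^{\otimes -i}\otimes_{\Xt}\calt)=0$ — in particular for all the $i,j$ in Definition~\ref{dnheredbundle} — one has
\begin{multline*}
\Ext^j_A\bigl(\calt',\ \omega_A^{\otimes -i}\otimes_A\calt'\bigr)
\;\cong\; \Ext^j_{\Xt}\bigl(\calt,\ \mathrm{res}(\omega_A^{\otimes -i}\otimes_A\calt')\bigr)
\;\cong\; \Ext^j_{\Xt}\Bigl(\calt,\ \omega_{\Xt}^{\otimes -i}\otimes_{\Xt}\bigoplus_{g\in G}g^*\calt\Bigr)\\
\;\cong\; \bigoplus_{g\in G}\Ext^j_{\Xt}\bigl(\calt,\ \omega_{\Xt}^{\otimes -i}\otimes_{\Xt}g^*\calt\bigr)
\;\cong\; \bigoplus_{g\in G}\Ext^j_{\Xt}\bigl(\calt,\ \omega_{\Xt}^{\otimes -i}\otimes_{\Xt}\calt\bigr)
\;=\;0,
\end{multline*}
where the first isomorphism is the adjunction $\mathrm{ind}\dashv\mathrm{res}$, the second uses the identification above together with $\mathrm{res}\,\mathrm{ind}\,\calt\cong\bigoplus_g g^*\calt$, and the fourth uses the $G$-stability hypothesis $g^*\calt\cong\calt$. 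Thus $\calt'=A\otimes_{\Xt}\calt$ is an $n$-hereditary tilting bundle on $A$.

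I expect the only delicate point to be the bookkeeping in the last paragraph: matching $\omega_A^{\otimes -i}\otimes_A(-)$ with $\omega_{\Xt}^{\otimes -i}\otimes_{\Xt}(-)$ compatibly with $\mathrm{res}$, and keeping track of equivariant structures so that the $G$-stability hypothesis is invoked exactly once. The exactness of $\mathrm{ind}$ and $\mathrm{res}$, the Maschke splitting, and the generation argument are all routine.
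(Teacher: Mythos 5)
Your proof is correct and follows essentially the same route as the paper: both rest on the adjunction identity $\Ext^p_A(A\otimes_{\Xt}\calt,-)\cong\Ext^p_{\Xt}(\calt,\mathrm{res}(-))$, the bimodule identification $\omega_A\cong\omega_{\Xt}\otimes_{\Xt}A$, the decomposition of $A\otimes_{\Xt}\calt$ as $\bigoplus_{g\in G}g^*\calt$ over $\oxt$, and a single invocation of $G$-stability. The only divergence is the generation step, where the paper argues by right-orthogonality ($\Ext^p_A(A\otimes_{\Xt}\calt,\calm)=0$ for all $p$ forces $\Ext^p_{\Xt}(\calt,\calm)=0$, hence $\calm=0$), while you use the Maschke splitting of $\calm$ off $A\otimes_{\Xt}\mathrm{res}\,\calm$ together with thick-subcategory generation; both are valid.
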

\begin{proof}
 Note first that
$$ \Ext^p_A(A \otimes_{\Xt} \calt, - ) = H^p(\shom_A(A \otimes_{\Xt} \calt, - )) = 
H^p(\shom_{\Xt}(\calt, - )) =\Ext^p_{\Xt}(\calt, - ).$$
Hence if $\calm$ is an $A$-module such that $\Ext^p_A(A \otimes_{\Xt} \calt, \calm) = 0$ for all $p$, then $\Ext^p_{\Xt}(\calt, \calm) = 0$ and we must have $\calm = 0$ since $\calt$ generates $D^b_c(\Xt)$. 

We now show the partial tilting and $n$-hereditary conditions. Note that $\omega_A = \omega_{\Xt} \otimes_{\Xt} A$ and that as a sheaf on $\Xt$ we have $A \otimes_{\Xt} \calt = \bigoplus_{g \in G} g^* \calt$. Hence for $p > 0$ we have 
$$ \Ext^p_A(A \otimes_{\Xt} \calt, \omega_A^{-r} \otimes_A A \otimes_{\Xt} \calt) = 
\Ext^p_{\Xt}(\calt, \bigoplus_{g \in G}\omega_{\Xt}^{-r} \otimes_{\Xt} g^* \calt) = 0 $$
since $\calt$ is a $G$-stable $n$-hereditary tilting bundle on $\Xt$. 
\end{proof}
\vs

This gives the following well-known fact. 
\begin{eg}  \label{eFanocurves}  \mlabel{eFanocurves}
The Fano weighted curves, weighted on 3 points have 1-hereditary tilting bundles.  Indeed, 
in this case, the weighted curves arise as GL-orders of the form $A = \calo_{\PP^1} \# G$ where $G$ is a finite subgroup of $PSL_2$ corresponding to types $D$ or $E$. The proposition applies then to the $G$-stable 1-hereditary tilting bundle $\calo \oplus \calo(1)$ on $\PP^1$  to give 1-hereditary tilting bundles on $A$. This example can be extended to the weighted projective line, weighted on 2 points with the same weight, but not to the other Fano weighted curves. 
\end{eg}

One usually prefers to deal {\em basic} tilting bundles, whereby we mean that the indecomposable summands are non-isomorphic. We give some results which help reduce the tilting bundle in Proposition~\ref{pGstabletilt} to a basic one. Let $\call$ be a bundle on $\Xt$. We will assume that $\call$ is {\em End-simple}, by which we mean that $\End_{\Xt} \call = k$ so in particular, $\call$ is indecomposable. 

\begin{defn}  \label{dstab}  \mlabel{dstab}
We define the {\em stabiliser of $\call$} to be 
$$\text{Stab}_G\, \call = \{ g \in G| g^* \call \simeq \call \}.$$
Given a subgroup $H$ of $G$, an {\em $H$-equivariant structure on $\call$} is an $\oxt\# H$-module structure on $\call$ that extends the $\oxt$-module structure on $\call$.
\end{defn}
Given a $G$-equivariant structure on $\call$, note that multiplication by $g \in G$ is a sheaf map $g^* \call \lm \call$. This makes the following result clear. 

\begin{prop}  \label{psameAtensor}  \mlabel{psameAtensor}
Let $\call$ be a sheaf on $\Xt$ and $ g \in G$. Then 
$$ A \otimes_{\Xt} \call \simeq A \otimes_{\Xt} g^* \call.$$
\end{prop}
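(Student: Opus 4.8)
The plan is to exploit the fact that $A=\oxt\#G$ already carries a ``$G$-symmetry'' of its own, namely right multiplication by group elements, and that this symmetry is exactly the twist that converts $\call$ into $g^{*}\call$ after tensoring. Recall that as an $\oxt$-bimodule $A=\bigoplus_{h\in G}\oxt\,h$, with the skew-commutation rule $h\,a={}^{h}\!a\,h$ for a local section $a$ of $\oxt$, where ${}^{h}\!a$ denotes the image of $a$ under the algebra automorphism of $\oxt$ induced by $h$.

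First I would note that right multiplication by $g$, that is $\rho_{g}\colon A\lm A,\ x\mapsto xg$, is an isomorphism of left $A$-modules, with inverse $\rho_{g^{-1}}$. Next I would compute its effect on the \emph{right} $\oxt$-module structure of $A$ (the structure used to form $-\otimes_{\Xt}\call$): for a local section $a$ of $\oxt$ one has $\rho_{g}(x)\cdot a=xga=x\,{}^{g}\!a\,g=\rho_{g}(x\cdot{}^{g}\!a)$. Hence, if we write ${}^{g}\!A$ for the $(A,\oxt)$-bimodule which equals $A$ as a left $A$-module but has its right $\oxt$-action pre-twisted by $a\mapsto{}^{g}\!a$, then $\rho_{g}$ is an isomorphism of $(A,\oxt)$-bimodules ${}^{g}\!A\iso A$.

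Tensoring this isomorphism over $\oxt$ with $\call$ then gives an isomorphism of left $A$-modules $A\otimes_{\Xt}\call\cong{}^{g}\!A\otimes_{\Xt}\call$. To finish I would identify the right-hand side with $A\otimes_{\Xt}g^{*}\call$ by inspecting generators and relations: the relation $x\cdot{}^{g}\!a\otimes m=x\otimes a\,m$ defining ${}^{g}\!A\otimes_{\oxt}\call$ is precisely the relation obtained by base-changing $\call$ along the ring automorphism of $\oxt$ corresponding to $g$, i.e. $g^{*}\call$ (up to interchanging $g$ and $g^{-1}$, depending on the chosen sign convention for the $G$-action on $\oxt$). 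Since $g\mapsto g^{-1}$ is a bijection of $G$, this yields $A\otimes_{\Xt}\call\cong A\otimes_{\Xt}g^{*}\call$ for every $g\in G$. Equivalently, the first two steps say that right multiplication makes the sheaf $A$, with its right $\oxt$-module structure, $G$-equivariant in a way compatible with the left $A$-action, so that the remark preceding the proposition makes the conclusion immediate.

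I do not expect any genuine obstacle here: the entire content is the bookkeeping of left- versus right-module structures and keeping the direction of the twist by $g$ straight. The one place to be a little careful is matching the twist appearing in the second step with the ring automorphism that defines $g^{*}(-)$, which is why it is cleanest to assert the isomorphism simultaneously for all $g\in G$ rather than for a single fixed one.
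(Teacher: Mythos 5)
Your proof is correct and is essentially the paper's argument made explicit: the paper only remarks that multiplication by the group element $g$ inside $A$ makes the statement clear, and your observation that right multiplication $\rho_g$ gives an $(A,\oxt)$-bimodule isomorphism ${}^{g}\!A \iso A$, hence $A\otimes_{\Xt}\call \simeq A\otimes_{\Xt}g^*\call$ after tensoring, is the precise form of that mechanism. Your resolution of the $g$ versus $g^{-1}$ convention by letting $g$ range over all of $G$ is fine, and your version has the small advantage of applying to an arbitrary sheaf $\call$ rather than only to one admitting an equivariant structure, which is what the proposition actually requires.
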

Hence if $\calt = \oplus_{i \in I} \calt_i$ is the decomposition of a $G$-stable $n$-hereditary tilting bundle on $\Xt$ into indecomposable bundles, then $\calt' = \bigoplus_{j \in J} A \otimes_{\Xt} \calt_j$ is an $n$-hereditary tilting bundle on $A$, if $J\subseteq I$ includes a representative of each $G$-orbit. 

\begin{lemma} \label{lHstructures}  \mlabel{lHstructures}
Let $\call$ be an End-simple bundle on $\Xt$ and $H$ be an abelian subgroup of $\text{Stab}_G\, \call$. 
\begin{enumerate}
 \item If there exists an $H$-equivariant structure on $\call$, then up to isomorphism, there are exactly $|H|$ such structures, say $\call^{(1)}, \ldots, \call^{(|H|)}$. In this case 
\begin{equation} \label{eHstructures}
 (\oxt\# H) \otimes_{\Xt} \call \simeq \bigoplus_{1 \leq i \leq |H|} \call^{(i)} . 
\end{equation}
 \item If $H$ is cyclic, then there always exists an $H$-equivariant structure on $\call$.
 \item If $H = \text{Stab}_G\, \call$ and $\call^{(1)}$ is an $H$-equivariant structure on $\call$, then $A \otimes_{\oxt\# H} \call^{(1)}$ is an indecomposable $A$-module. 
\end{enumerate}
\end{lemma}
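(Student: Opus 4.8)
The plan is to handle the three parts in order, using (i) to prove (iii).

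\emph{Part (i).} The assumed-nonempty set of $H$-equivariant structures on $\call$ is a torsor under the character group $H^{\ast}=\Hom(H,k^{\ast})$: if $\{\rho_h\colon h^{\ast}\call\xrightarrow{\sim}\call\}$ and $\{\rho'_h\}$ are two such structures, then $\rho'_h\circ\rho_h^{-1}\in\Aut_{\oxt}\call=k^{\ast}$ by End-simplicity, and comparing the cocycle identities $\rho_{hh'}=\rho_h\circ h^{\ast}\rho_{h'}$ for the two structures shows $h\mapsto\rho'_h\circ\rho_h^{-1}$ is a character; conversely twisting a structure by a character gives a new one, and two structures are isomorphic iff the twisting characters agree, since an isomorphism is an $\oxt$-automorphism of $\call$, hence a scalar. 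As $H$ is abelian and $k$ is algebraically closed of characteristic zero, $|H^{\ast}|=|H|$, giving exactly $|H|$ structures $\call^{(1)},\dots,\call^{(|H|)}$. For the decomposition \eqref{eHstructures}, assemble the counit maps $a\otimes m\mapsto am$ into the $\oxt\#H$-linear map $(\oxt\#H)\otimes_{\oxt}\call\to\bigoplus_{\chi\in H^{\ast}}\call^{(\chi)}$; identifying $(\oxt\#H)\otimes_{\oxt}\call=\bigoplus_{h\in H}h\otimes\call\cong\call^{|H|}$ via $H\subseteq\mathrm{Stab}_G\call$, this map is, up to the chosen isomorphisms $h^{\ast}\call\cong\call$, the ``Fourier'' matrix $(\chi(h))_{\chi,h}$, which is invertible by orthogonality of characters.

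\emph{Part (ii).} This is the usual root-extraction argument: writing $H=\langle h\rangle$ with $|H|=m$, choose any isomorphism $\rho\colon h^{\ast}\call\xrightarrow{\sim}\call$ (possible as $h\in\mathrm{Stab}_G\call$); the composite $\rho\circ h^{\ast}\rho\circ\cdots\circ h^{(m-1)\ast}\rho\colon(h^m)^{\ast}\call=\call\to\call$ is a nonzero scalar $\lambda$ (End-simplicity again), and after rescaling $\rho$ by an $m$-th root of $\lambda^{-1}$ this composite becomes the identity. Since $H$ is cyclic, that single relation is exactly the cocycle condition, so $\rho$ generates an $H$-equivariant structure.

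\emph{Part (iii).} Set $M=A\otimes_{\oxt\#H}\call^{(1)}$. Inducing the decomposition of part (i) from $\oxt\#H$ up to $A=\oxt\#G$ gives
\[
A\otimes_{\oxt}\call\;=\;A\otimes_{\oxt\#H}\bigl((\oxt\#H)\otimes_{\oxt}\call\bigr)\;\cong\;\bigoplus_{\chi\in H^{\ast}}\bigl(A\otimes_{\oxt\#H}\call^{(\chi)}\bigr),
\]
a decomposition of the $A$-module $A\otimes_{\oxt}\call$ into $|H|$ summands. On the other hand, since $A\otimes_{\oxt}(-)$ is left adjoint to restriction $\oxt\#G\text{-}\mo\to\oxt\text{-}\mo$ and $\mathrm{Res}_{\oxt}(A\otimes_{\oxt}\call)=\bigoplus_{g\in G}g^{\ast}\call$, we get
\[
\End_A(A\otimes_{\oxt}\call)\;\cong\;\Hom_{\oxt}\Bigl(\call,\bigoplus_{g\in G}g^{\ast}\call\Bigr)\;=\;\bigoplus_{g\in G}\Hom_{\oxt}(\call,g^{\ast}\call).
\]
Granting the vanishing $\Hom_{\oxt}(\call,g^{\ast}\call)=0$ for $g\notin\mathrm{Stab}_G\call$, only the terms with $g\in H$ survive, each equal to $\End_{\oxt}\call=k$, so $\dim_k\End_A(A\otimes_{\oxt}\call)=|H|$. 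Since a module with $n$ nonzero direct summands has $\dim_k\End\ge n$ (its projections are linearly independent idempotents), the $|H|$-term decomposition above must be into indecomposables; in particular $M=A\otimes_{\oxt\#H}\call^{(1)}$ is indecomposable.

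The step I expect to be the main obstacle is the vanishing $\Hom_{\oxt}(\call,g^{\ast}\call)=0$ whenever $g^{\ast}\call\not\simeq\call$, on which part (iii) rests. When $\call=\oxt(D)$ is a line bundle it is immediate: a nonzero map forces $g^{\ast}D-D$ to be effective, but the effective classes $g^{(i+1)\ast}D-g^{i\ast}D$ for $i=0,\dots,\operatorname{ord}(g)-1$ sum to $0$, so each — in particular $g^{\ast}D-D$ — vanishes, giving $g^{\ast}\call\simeq\call$, a contradiction. For a general End-simple bundle $\call$ one expects a comparable (orbit/rank) argument, and establishing it cleanly is where the real care is needed; in the applications of this paper $\call$ is a line bundle, so the line-bundle case suffices there.
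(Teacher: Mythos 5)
Parts (i) and (ii) of your proposal are correct and essentially the paper's own argument: the set of $H$-equivariant structures is a torsor under $H^*$ (the paper phrases this as twisting $\call^{(1)}$ by characters), the map to $\bigoplus_\chi \call^{(\chi)}$ is identified with the character table, which is invertible, and the cyclic case is the usual root-extraction. The problem is part (iii). Your route computes $\dim_k\End_A(A\otimes_{\Xt}\call)$ by adjunction and needs the vanishing $\Hom_{\oxt}(\call,g^*\call)=0$ for all $g\notin\mathrm{Stab}_G\,\call$, which you prove only for line bundles and explicitly leave open in general. That vanishing does not follow from End-simplicity: $\End_{\Xt}\call=k$ says nothing about maps between the non-isomorphic bundles $\call$ and $g^*\call$ (two non-isomorphic End-simple bundles can admit plenty of maps between them, e.g.\ $\calo$ and $\calo(1)$ on $\PP^1$; one would get the vanishing if $\call$ were stable for a $G$-invariant polarisation, but simple bundles need not be stable, and your orbit-sum argument uses the line-bundle structure in an essential way). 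Since the lemma is stated, and later invoked in Theorem~\ref{tfindbasic}(ii), for arbitrary End-simple bundles, this is a genuine gap: your dimension count $\dim_k\End_A(A\otimes_{\Xt}\call)=|H|$ is strictly stronger than the indecomposability you are after, and it is exactly the part that can fail.

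The paper's proof of (iii) avoids any Hom computation. Restricting $A\otimes_{\oxt\# H}\call^{(1)}$ to $\oxt$ gives $\bigoplus g^*\call$ over a left transversal of $H$ in $G$, a direct sum of pairwise non-isomorphic indecomposables precisely because $H$ is the \emph{full} stabiliser; by Krull--Schmidt any nonzero $A$-module summand $M$ contains some $g^*\call$ as an $\oxt$-summand, and since multiplication by elements of $G$ is skew-$\oxt$-linear and preserves $M$, it contains all of them; as each occurs with multiplicity one in the whole module, $M$ must be everything. You could repair your part (iii) by replacing the endomorphism-dimension count with this multiplicity-one argument; as written, your proof is complete only in the line-bundle case (where your argument is correct and does cover the paper's explicit examples, but not the lemma in the generality in which it is stated and used).
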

\begin{proof}
To simplify notation, we let $B = \oxt \# H$. Suppose that $\call^{(1)}$ is an $H$-equivariant structure on $\call$. For any character $\chi \in H^*$, $\call^{(1)} \otimes_k \chi$ is a $B$-module whose action is defined by $\oxt$ acting on $\call^{(1)}$ and $H$ acting diagonally on the tensor product. Hence there are at least $|H|$ $H$-equivariant structures on $\call$. Moreover, there are no others for any two $H$-equivariant structures can only differ in the $H$-module structure, which must be by a character as $\call$ is End-simple. 

We prove part ii) and assume that $H = \langle \h \rangle$ is a cyclic group of order $p$. Consider an arbitrary isomorphism $\phi: h^* \call \lm \call$ which we view as multiplication by $h$. It generates an $H$-equivariant structure on $\call$ if and only if the induced map $\phi^p:=\phi(h^*\phi) \ldots(h^{(p-1)*}\phi): \call = (h^p)^*\call \lm \call$ is the identity map. Now $\call$ is End-simple so $\phi^p$ is at worst a scalar multiple $\alpha$ of the identity and changing $\phi$ by a $p$-th root of $\alpha$ now gives an $H$-equivariant structure and establishes ii).

We now establish the isomorphism in (\ref{eHstructures}). Note first that 
$$ \Hom_{B}(B \otimes_{\Xt} \call, \call^{(i)}) = 
\Hom_{\Xt}(\call, \call^{(i)}) = k$$
where we have written =, since all isomorphisms are canonical. If for each $i$, we pick the homomorphism corresponding to 1, then we obtain an $\oxt\# H$-module homomorphism
$$  \Psi: B \otimes_{\Xt} \call \lm \bigoplus_{1 \leq i \leq |H|} \call^{(i)} . $$
To show this is an isomorphism, we will view this as a morphism $\Psi': \call^{|H|} \lm \call^{|H^*|}$ as follows. Firstly, the $H$-equivariant structure on $\call^{(1)}$ allows us to identify each summand $h^*\call, h \in H$ of $B \otimes_{\Xt} \call$ with $\call$. On the other hand, we have seen that $\call^{(i)}\simeq \call^{(1)} \otimes_k \chi$  for some $\chi\in H^*$ so the codomain of $\Psi$ is naturally a direct sum of copies of $\call$ indexed by the characters of $H$. Hence $\Psi'$ is given by a matrix which is the character table of $H$. This is invertible so $\Psi$ is the desired isomorphism.

We now prove part~iii). As a sheaf on $\Xt$, we know that $A \otimes_{\oxt\# H} \call^{(1)}$ is the direct sum of $|G/H|$ non-isomorphic indecomposables $g^* \call$, where $g$ runs through a left transversal of $H$ in $G$. By Krull-Schmidt, any non-zero $A$-module summand $M$ must contain an $\oxt$-module summand $M'$ isomorphic to $g^* \call$ for some $g \in G$. Now multiplication by $g' \in G$ is skew-$\oxt$-linear so $g'M'$ is also a summand of $M$. Hence $M$ is isomorphic to the direct sum of the same $|G/H|$ summands as $A \otimes_{\oxt\# H} \call^{(1)}$ so equals $A \otimes_{\oxt\# H} \call^{(1)}$.
\end{proof}
\vs 

Suppose $H$ is a normal subgroup of $G$. Then conjugation by $g \in G$ induces an automorphism $c_g$ of $B:= \oxt \# H$. Thus given a $B$-module $\calm$, we may define $g^* \calm = B \otimes_{c_g,B} \calm$ where the subscript on the tensor means we use the ring homomorphism $c_g: B \lm B$. 

\begin{thm}  \label{tfindbasic}  \mlabel{tfindbasic}
\begin{enumerate}
\item Let $\call,\call'$ be indecomposable bundles on $\Xt$. Then the indecomposable $A$-module summands of $A \otimes_{\Xt} \call$ and $A \otimes_{\Xt} \call'$ are non-isomorphic unless $\call' \simeq g^* \call$ for some $g \in G$.
\item Let $\call$ be an End-simple bundle on $\Xt$ with stabiliser $H = \text{Stab}_G\, \call$. Suppose that $H$ is a normal abelian subgroup of $G$ and that there is an $H$-equivariant structure $\call^{(1)}$ on $\call$. Then 
$$ A \otimes_{\Xt} \call \simeq \bigoplus_{\chi \in H^*} A \otimes_B \call^{(1)} \otimes_k \chi$$
is the decomposition of $A \otimes_{\Xt} \call$ into non-isomorphic  indecomposable $A$-module summands. 
\end{enumerate}
\end{thm}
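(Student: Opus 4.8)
The plan is to reduce everything to the two structural facts already assembled: Proposition~\ref{psameAtensor} (tensoring up to $A$ kills the $G$-action, so $A\otimes_{\Xt}\call \simeq A\otimes_{\Xt}g^*\call$) and the local analysis in Lemma~\ref{lHstructures}, especially parts i) and iii). For part~i), I would argue by Krull--Schmidt on $\Xt$. As a sheaf on $\Xt$ one has $A\otimes_{\Xt}\call = \bigoplus_{g\in G}g^*\call$ and similarly for $\call'$. If an indecomposable $A$-module summand $M$ of $A\otimes_{\Xt}\call$ were isomorphic to an indecomposable $A$-module summand $M'$ of $A\otimes_{\Xt}\call'$, then restricting to $\Xt$ and applying Krull--Schmidt, some $\oxt$-summand $g^*\call$ of $M$ must be isomorphic to some $\oxt$-summand $g'^*\call'$ of $M'$; hence $\call' \simeq (g^{-1}g')^*\call$ up to relabelling, as claimed. (One must be slightly careful: indecomposability of $\call,\call'$ as bundles guarantees each $g^*\call$ is $\oxt$-indecomposable, so Krull--Schmidt on $\Coh\Xt$ applies.)

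For part~ii), I would first invoke Lemma~\ref{lHstructures}i) to get the isomorphism of $B$-modules
$$B\otimes_{\Xt}\call \;\simeq\; \bigoplus_{\chi\in H^*}\call^{(1)}\otimes_k\chi,$$
where $B = \oxt\#H$ and I am using that $|H| = |H^*|$ and that the distinct $H$-equivariant structures are exactly the $\call^{(1)}\otimes_k\chi$. Applying the exact functor $A\otimes_B(-)$ (note $A$ is a projective, in fact free, right $B$-module since $H \le G$) gives
$$A\otimes_{\Xt}\call \;=\; A\otimes_B\big(B\otimes_{\Xt}\call\big) \;\simeq\; \bigoplus_{\chi\in H^*}A\otimes_B\big(\call^{(1)}\otimes_k\chi\big),$$
which is the displayed decomposition. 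It remains to check that each $A\otimes_B(\call^{(1)}\otimes_k\chi)$ is indecomposable and that they are pairwise non-isomorphic. Indecomposability is Lemma~\ref{lHstructures}iii), applied to the $H$-equivariant structure $\call^{(1)}\otimes_k\chi$ (which is again an $H$-equivariant structure on the same End-simple bundle $\call$, since $H = \text{Stab}_G\,\call$ is unchanged). For pairwise non-isomorphism I would use that $H$ is normal: then $G$ acts on $B$-modules by the twisting $g^*\calm = B\otimes_{c_g,B}\calm$, and for $\calm = \call^{(1)}\otimes_k\chi$ one computes $g^*\calm \simeq \call^{(1)}\otimes_k({}^g\chi)$ where ${}^g\chi$ is the conjugated character; crucially, since $\call$ lies in the $g$-fixed locus of the action (as $g\in G$ and $H\triangleleft G$, conjugation permutes the structures on the same bundle $\call$), the $G$-orbit of the summand $A\otimes_B\calm$ does not merge distinct $\chi$'s into the same $A$-module. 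Concretely: if $A\otimes_B(\call^{(1)}\otimes_k\chi) \simeq A\otimes_B(\call^{(1)}\otimes_k\chi')$ as $A$-modules, restrict to $B$ and use that the $B$-summands of $A\otimes_B\calm$ are exactly $\{{}^g\calm : gH \in G/H\} = \{\call^{(1)}\otimes_k{}^g\chi\}$; comparing multiplicities of the summand $\call^{(1)}\otimes_k\chi$ forces $\chi$ and $\chi'$ to lie in the same $G$-orbit of characters and then an index count against the already-established total decomposition forces $\chi = \chi'$.

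The main obstacle I anticipate is the bookkeeping in the last step: disentangling the $G$-conjugation action on $H^*$ from the fact that it is precisely the $\chi$-twists (not the underlying bundle $\call$) that get permuted, and making sure the count of non-isomorphic summands on the $A$-side matches $|H^*| = |H|$ so that no two twists collapse. This is where normality of $H$ is essential and where one should be careful that $A\otimes_B(-)$ being faithfully flat (free) lets one detect isomorphism of $A$-modules on the level of $B$-restrictions via Krull--Schmidt. Everything else is formal manipulation of skew group algebras and base change, building directly on Lemma~\ref{lHstructures} and Proposition~\ref{psameAtensor}.
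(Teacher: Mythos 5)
Your part i) and the first half of part ii) (the decomposition $A\otimes_{\Xt}\call\simeq\bigoplus_{\chi\in H^*} A\otimes_B(\call^{(1)}\otimes_k\chi)$ via Lemma~\ref{lHstructures}i) and the indecomposability of each summand via Lemma~\ref{lHstructures}iii)) are correct and essentially the paper's argument. The gap is in your pairwise non-isomorphism step. You assert that for every $g\in G$ one has $g^*(\call^{(1)}\otimes_k\chi)\simeq\call^{(1)}\otimes_k({}^g\chi)$, justified by the claim that ``$\call$ lies in the $g$-fixed locus of the action''. That is false for $g\notin H$: by definition $H=\text{Stab}_G\,\call$, so for $g\notin H$ the twist $g^*\calm=B\otimes_{c_g,B}\calm$ has underlying $\oxt$-module $g^*\call\not\simeq\call$ (conjugation by $g$ acts on the $\oxt$-part of $B=\oxt\# H$ by the geometric action of $g$); it is an $H$-equivariant structure on a \emph{different} bundle, not a character twist of $\call^{(1)}$. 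Consequently your description of the $B$-restriction of $A\otimes_B(\call^{(1)}\otimes_k\chi)$ as $\bigoplus_{gH\in G/H}\call^{(1)}\otimes_k{}^g\chi$ is wrong, and the concluding ``index count'' does not close the argument even on its own terms: if the restrictions really had that form, restriction to $B$ could not distinguish $\chi$ from its $G$-conjugates, and Krull--Schmidt applied to the total decomposition does not forbid two of the $|H|$ summands from being isomorphic.

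The correct repair is short and is exactly what the paper does. After relabelling (any $\call^{(1)}\otimes_k\chi'$ is again an $H$-equivariant structure on $\call$), suppose $A\otimes_B\call^{(1)}\simeq A\otimes_B(\call^{(1)}\otimes_k\chi)$. Restricting to $B$ and using normality, the $B$-summands of the left side are the conjugates $g^*\call^{(1)}$, so Krull--Schmidt yields a $B$-module isomorphism $\call^{(1)}\otimes_k\chi\simeq g^*\call^{(1)}$ for some $g\in G$. Forgetting down to $\oxt$, this forces $g^*\call\simeq\call$, i.e.\ $g\in H$; but conjugation by $g\in H$ is inner on $B$, so $g^*\call^{(1)}\simeq\call^{(1)}$, and then Lemma~\ref{lHstructures}i) (the $|H|$ structures $\call^{(1)}\otimes_k\chi$ are pairwise non-isomorphic) forces $\chi$ to be trivial. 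In other words, the point is not that conjugation permutes the character twists of $\call^{(1)}$ --- it does not, for $g\notin H$ --- but that the only conjugate whose underlying bundle is $\call$ is the inner one, and inner conjugation changes nothing; this is precisely where the hypothesis $H=\text{Stab}_G\,\call$ is used.
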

\begin{proof}
We prove part~i) first. If $A \otimes_{\Xt} \call$ and $A \otimes_{\Xt} \call'$ have an  isomorphic $A$-module summand, then they have an isomorphic indecomposable $\oxt$-module summand, say $g^* \call$. It follows that as $\oxt$-modules, we have $A \otimes_{\Xt} \call' \simeq \oplus_{g \in G} g^* \call$ and i) holds.

We now prove ii). Lemma~\ref{lHstructures} gives the desired decomposition into indecomposables. It remains only to show that the summands are non-isomorphic. Suppose then that $A \otimes_B \call^{(1)} \simeq A \otimes_B \call^{(1)} \otimes_k \chi$. Decomposing both sides as $B$-modules, we see that there exists some $B$-module isomorphism $\phi:\call^{(1)} \otimes_k \chi \simeq g^* \call^{(1)}$ for some $g \in G$. Now $\phi$ is in particular, an isomorphism of $\oxt$-modules so $g \in \text{Stab}_G\, \call = H$. But then $g^* \call^{(1)} \simeq \call^{(1)}$ as conjugation by $g$ on $B$ is inner. Lemma~\ref{lHstructures}i) then shows that $\chi$ is trivial and the theorem is proved. 
\end{proof}
\vs

We now look at some examples in the case of surfaces. Unfortunately, there seems to be very few as Galois covers of smooth surfaces tend not to be smooth (only normal). 

\begin{eg}  \label{econicram}  \mlabel{econicram}
Let $\Xt = \PP^1 \times \PP^1$. Then the cyclic group of order 2, $G = \{1,g\}$ acts on $\Xt$ by $g.(a,b) = (b,a)$. The quotient is $X = \Xt/G = \PP^2$. The ramification divisor on $\Xt$ is the (1,1)-divisor defined by $a=b$, and its image in $\PP^2$ is a smooth conic $C$. Hence, Theorem~\ref{tskewisGL} ensures that $A = \oxt \# G$ is a GL-order associated to $(\PP^2,\frac{1}{2}C)$.

We may also blow up $\Xt$ at a generic $G$-orbit, say $\{(0,\infty),(\infty,0)\}$ to obtain $f:\Xt_1 \lm \Xt$. Let the exceptional curves be $E,E'$. Now $G$ also acts on $\Xt_1$ and the quotient $X_1$ is the blowup of $X = \PP^2$ at the one point $f(0,\infty)$. Again, $A_1 = \oxt_1 \# G$ is the GL-order associated to $(X_1,\frac{1}{2}f^{-1}C)$ where we have used $f$ denote the blowup $X_1 \lm X$ as well.
\end{eg}

\begin{cor}  \label{cconicram}   \mlabel{cconicram}
Using the notation outlined in \ref{econicram}, we have 
\begin{enumerate}
 \item $\calt = A  \oplus (A \otimes_{\Xt} \oxt(1,0)) \oplus (A \otimes_{\Xt} \oxt(1,1))$ is a basic 2-hereditary tilting bundle on $A$. The summand $A \otimes_{\Xt} \oxt(1,0)$ is an indecomposable rank 2 bundle , whilst the other two are direct sums of two line bundles.
 \item $\calt_1 = A_1  \oplus (A_1 \otimes_{\Xt_1} f^*\oxt(1,0)) \oplus (A_1 \otimes_{\Xt_1} \calo_{\Xt_1}(E)) \oplus (A_1 \otimes_{\Xt_1} f^*\oxt(1,1))$  is a basic 2-hereditary tilting bundle on $A_1$. The summands $A_1 \otimes_{\Xt_1} f^*\oxt(1,0), \ A_1 \otimes_{\Xt_1} \calo_{\Xt_1}(E)$ are indecomposable rank 2 bundles, whilst the other two are direct sums of two line bundles.
\end{enumerate}
\end{cor}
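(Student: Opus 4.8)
The plan is to obtain both bundles as the \emph{basic reduction} of $A\otimes_{\Xt}\tilde\calt$ (resp.\ of $A_1\otimes_{\Xt_1}\tilde\calt_1$) for a suitable $G$-stable $2$-hereditary tilting bundle $\tilde\calt$ on $\Xt=\PP^1\times\PP^1$ (resp.\ $\tilde\calt_1$ on $\Xt_1$), using the machinery of Section~\ref{sgroup}: Proposition~\ref{pGstabletilt} makes $A\otimes_{\Xt}\tilde\calt$ a $2$-hereditary tilting bundle on $A$, the remark after Proposition~\ref{psameAtensor} lets us pass to the summand $\bigoplus_{j\in J}A\otimes_{\Xt}\calt_j$ over a set $J$ of representatives of the $G$-orbits of indecomposable summands of $\tilde\calt$, Lemma~\ref{lHstructures} decomposes each $A\otimes_{\Xt}\call$ into indecomposables, and Theorem~\ref{tfindbasic}(i) shows the result is basic.

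For (i), take the standard tilting bundle $\tilde\calt=\calo\oplus\oxt(1,0)\oplus\oxt(0,1)\oplus\oxt(1,1)$ on $\PP^1\times\PP^1$; it is $2$-hereditary by the discussion following Proposition~\ref{pheredDP} (here $-K=\oxt(2,2)$, so $c_1(\call)-K$ is effective and nonzero for every $\call\in\mathbb{E}(\tilde\calt)$), and it is $G$-stable because $g^*\oxt(a,b)=\oxt(b,a)$ merely permutes its summands. Its $G$-orbits of summands are $\{\calo\}$, $\{\oxt(1,0),\oxt(0,1)\}$, $\{\oxt(1,1)\}$, so $\calt=A\oplus(A\otimes_{\Xt}\oxt(1,0))\oplus(A\otimes_{\Xt}\oxt(1,1))$ is a $2$-hereditary tilting bundle on $A$. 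Since $\calo$ and $\oxt(1,1)$ are End-simple with $\text{Stab}_G$ equal to the cyclic group $G$, Lemma~\ref{lHstructures}(i)--(ii) shows $A\otimes_{\Xt}\calo=A$ and $A\otimes_{\Xt}\oxt(1,1)$ each split as a sum of the $|G|=2$ (pairwise non-isomorphic) equivariant structures, i.e.\ as two line bundles; whereas $\text{Stab}_G\,\oxt(1,0)=1$, so by Lemma~\ref{lHstructures}(iii) the bundle $A\otimes_{\Xt}\oxt(1,0)$ is indecomposable, and as a sheaf on $X$ it is $\pi_*$ of the rank-$2$ bundle $\oxt(1,0)\oplus\oxt(0,1)$, hence has $X$-rank $4=2p$ and $A$-rank $2$. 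Finally $\calt$ is basic: summands arising from the three distinct orbit representatives are non-isomorphic by Theorem~\ref{tfindbasic}(i), and the two line bundle summands inside $A$ (resp.\ inside $A\otimes_{\Xt}\oxt(1,1)$) are non-isomorphic by Lemma~\ref{lHstructures}(i).

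For (ii) the argument is formally the same, with $\tilde\calt_1=\calo\oplus f^*\oxt(1,0)\oplus f^*\oxt(0,1)\oplus\calo_{\Xt_1}(E)\oplus\calo_{\Xt_1}(E')\oplus f^*\oxt(1,1)$. Writing $h_1,h_2$ for the pullbacks of the two rulings of $\PP^1\times\PP^1$ and $e,e'$ for the classes of $E,E'$, the six summand classes are $0,h_1,h_2,h_1+h_2,e,e'$; in the partial order ``difference is effective'' these have minimum $0$ and maximum $h_1+h_2$ (using that $h_i-e$ and $h_i-e'$ are strict transforms of rulings through the blown-up points). Since $-K_{\Xt_1}=2h_1+2h_2-e-e'$, every $\call\in\mathbb{E}(\tilde\calt_1)$ satisfies $c_1(\call)-K_{\Xt_1}\geq h_1+h_2-e-e'=(h_1-e)+(h_2-e')$, which is effective and nonzero; as $\Xt_1$ is Fano (a degree-$6$ del Pezzo surface), Proposition~\ref{pheredDP} then gives that $\tilde\calt_1$ is $2$-hereditary, provided we first check it is a tilting bundle. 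Generation follows from the standard blowup argument used in the proof of Theorem~\ref{tdpbundles} (Orlov's decomposition for $f\colon\Xt_1\to\Xt$: the pullbacks $f^*$ of the summands of $\tilde\calt$ generate $f^*D^b_c(\Xt)$, while $\calo_E(-1)=\coker(\calo_{\Xt_1}\to\calo_{\Xt_1}(E))$ and similarly for $E'$), and the partial tilting vanishings $H^i(\call)=0$ for $i>0$ and $\call\in\mathbb{E}(\tilde\calt_1)$ are a direct cohomology computation. The involution $g$ exchanges the two rulings and the curves $E,E'$, so $\tilde\calt_1$ is $G$-stable; its summand orbits are $\{\calo\}$, $\{f^*\oxt(1,0),f^*\oxt(0,1)\}$, $\{\calo_{\Xt_1}(E),\calo_{\Xt_1}(E')\}$, $\{f^*\oxt(1,1)\}$, so the associated basic reduction is exactly $\calt_1$. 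Exactly as in (i), $A_1$ and $A_1\otimes_{\Xt_1}f^*\oxt(1,1)$ (stabiliser $G$) split into two line bundles each, while $A_1\otimes_{\Xt_1}f^*\oxt(1,0)$ and $A_1\otimes_{\Xt_1}\calo_{\Xt_1}(E)$ (trivial stabiliser) are indecomposable of $A_1$-rank $2$, and basicness follows from Theorem~\ref{tfindbasic}(i) and Lemma~\ref{lHstructures}(i).

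The only non-formal step is verifying that $\tilde\calt_1$ is a tilting bundle on $\Xt_1$: the generation argument and the finite but fiddly batch of vanishings $H^i(\call)=0$ ($i>0$) for $\call\in\mathbb{E}(\tilde\calt_1)$. Everything else is bookkeeping with Section~\ref{sgroup} and the effectivity criterion in Proposition~\ref{pheredDP}. One could sidestep this last point by invoking a known full strong exceptional collection on the degree-$6$ del Pezzo surface $\Xt_1$.
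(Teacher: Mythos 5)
Your proposal is correct and takes essentially the same route as the paper: both reduce, via Proposition~\ref{pGstabletilt} and Theorem~\ref{tfindbasic}, to checking that the $G$-stable bundle $\calo_{\Xt_1}\oplus f^*\oxt(1,0)\oplus f^*\oxt(0,1)\oplus\calo_{\Xt_1}(E)\oplus\calo_{\Xt_1}(E')\oplus f^*\oxt(1,1)$ is a 2-hereditary tilting bundle on $\Xt_1$ (with (i) as a sub-case), and both get 2-hereditariness from Proposition~\ref{pheredDP} via the identical bound $c_1(\call)\geq c_1f^*\oxt(-1,-1)$, so that $c_1(\call)-K_{\Xt_1}$ is non-zero effective on the Fano surface $\Xt_1$. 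The only divergence is cosmetic: the paper disposes of the partial-tilting vanishings you deferred by viewing $\Xt_1$ as $\PP^2$ blown up at 3 general points and citing Lemma~\ref{lcohomBlP}, while the Lemma~\ref{lHstructures}/Theorem~\ref{tfindbasic} bookkeeping you spell out for indecomposability, ranks and basicness is exactly what the paper leaves implicit.
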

\begin{proof}
We prove ii) only, as i) is essentially a sub-case of ii). By Theorem~\ref{tfindbasic} and Proposition~\ref{pGstabletilt}, we need only show that 
$$\calt_1' = \calo_{\Xt_1} \oplus f^*\oxt(1,0) \oplus f^* \oxt(0,1) \oplus \calo_{\Xt_1}(E) \oplus \calo_{\Xt_1}(E') \oplus f^*\oxt(1,1)$$
is a $G$-stable 2-hereditary tilting bundle on $\Xt_1$. Now it is certainly $G$-stable and generates the derived category. The partial tilting condition can be checked directly using the Leray spectral sequence for $f: \Xt_1 \lm X_1$, or observing that $\Xt_1$ is $\PP^2$ with 3 general points blown up and using Lemma~\ref{lcohomBlP} to check cohomology vanishes. It remains only to check $\calt_1'$ is 2-hereditary, for which we may use Proposition~\ref{pheredDP} since $\calt'_1$ is quasi-canonical. Now given $\call \in \mathbb{E}(\calt'_1)$, $c_1(\call)$ is bounded below by $c_1f^*\calo_{\Xt}(-1,-1)$. Hence $c_1(\call) - K_{\Xt_1}$ is always non-zero effective and we are done.
\end{proof}
\vs

\begin{eg}  \label{econicramalg}
When the group $G$ is cyclic, or more generally, the hypotheses of Theorem~\ref{tfindbasic} hold for all the indecomposable summands of a tilting bundle $\calt$ on $\Xt$, then it is easy to calculate the the endomorphism ring of the tilting bundle $A \otimes_{\Xt} \calt$. We illustrate by computing the endomorphism rings in the case of Example~\ref{econicram}. Below we let $\chi_-$ be the non-trivial character of $G=\{1,g\}$. 

We start with the case $\Xt = \PP^1 \times \PP^1$. As a sheaf on $\Xt$,  $\calt = A  \oplus (A \otimes_{\Xt} \oxt(1,0)) \oplus (A \otimes_{\Xt} \oxt(1,1))$ is a direct sum of line bundles so maps between indecomposable summands can be written down as matrices with entries in the Cox ring $R$ of $\Xt$. Now $R$ is generated by the bases $x,y \in H^0(\oxt(1,0))$ and $x',y' \in H^0(\oxt(0,1))$. Furthermore, we may lift the action of $G$ to $R$ and assume that $x' = g.x, y' = g.y$. This gives a $G$-equivariant structure on $\oxt(1,1)$ which we consider the default, and $\oxt(1,1) \otimes_k \chi_-$ is the other one. There is always a natural $G$-equivariant structure on $\oxt$, and the other one is $\oxt \otimes_k \chi_-$. Now 
$$\Hom_A(\oxt,A \otimes_{\Xt} \oxt(1,0)) = \Hom_{\Xt}(\oxt, A \otimes_{\Xt} \oxt(1,0))^G = 
\Hom_{\Xt}(\oxt, \oxt(1,0) \oplus \oxt(0,1))^G$$ 
so has basis $\bfx = ( x \ x'), \bfy = (y \ y')$. Similarly, $\Hom_A(\ox\otimes_k \chi_-,A \otimes_{\Xt} \oxt(1,0))$ has basis $\bfx_- = ( x \ -x'), \bfy_- = (y \ -y')$. Given a vector $\mathbf{z} = (z \ z') \in R^2$, we define its ``adjoint'' to be $\mathbf{z}^{\dagger} := (g.z \ g.z')^T$. The quiver of $\End_A \calt$ is given by
$$\diagram
  & \oxt \otimes_k \chi_- \dto<.5ex>^{\bfy_-} \dto<-.5ex>_{\bfx_-}& \\
\oxt \rto<.5ex>^(.3){\bfx} \rto<-.5ex>_(.3){\bfy}& A \otimes_{\Xt} \oxt(1,0) 
\dto<.5ex>^{\bfy^{\dagger}_-} \dto<-.5ex>_{\bfx^{\dagger}_-}
\rto<.5ex>^(.6){\bfx^{\dagger}} \rto<-.5ex>_(.6){\bfy^{\dagger}}& \oxt(1,1)\\
  & \oxt(1,1) \otimes_k \chi_- & 
\enddiagram$$
The relations can easily be computed by matrix multiplication and using the relations in the Cox ring. We have some of the symmetric algebra type:
$$ \bfx \bfy^{\dagger} = \bfy \bfx^{\dagger}, \quad \bfx_- \bfy^{\dagger}_- = \bfy_- \bfx^{\dagger}_- .$$
The others are of exterior algebra type:
\begin{gather*}
 0 = \bfx \bfx_-^{\dagger} = \bfy \bfy_-^{\dagger} = \bfx \bfy_-^{\dagger} +  \bfy \bfx_-^{\dagger} \\
 0 = \bfx_- \bfx^{\dagger} = \bfy_- \bfy^{\dagger} = \bfx_- \bfy^{\dagger} +  \bfy_- \bfx^{\dagger}
\end{gather*}

We now look at the case where $\Xt$ is the blowup of $\PP^1 \times \PP^1$ at $q = (0 , \infty), q' = (\infty,0)$. We drop the subscript 1 from $\Xt$ to disencumber notation and for similar reasons, write $\oxt(m,n)$ for $f^*\calo_{\PP^1 \times\PP^1}(m,n)$. Note that the Cox ring $R$ of $\PP^1 \times \PP^1$ can be viewed as a subaglebra of the Cox ring $R_1$ of $\Xt$ via the pullback $f^*$. We will re-use the notation for elements of $R$ and may further assume that $x=0$ at $q$ and $y=0$ on $q'$. Now $x,y'$ are both 0 at $q$, so in $R_1$, we may factor $x,y'$ into
$$
 x\colon \oxt \xrightarrow{t} \oxt(E) \xrightarrow{\tfrac{x}{t}} \oxt(1,0), \quad 
y'\colon \oxt \xrightarrow{t} \oxt(E) \xrightarrow{\tfrac{y'}{t}} \oxt(0,1)$$
where $\tfrac{x}{t}, \tfrac{y'}{t}$ are formal symbols chosen to remind us of the factorisation above. Applying $g$ to these factorisations gives factorisations
$$
 x'\colon \oxt \xrightarrow{t'} \oxt(E') \xrightarrow{\tfrac{x'}{t'}} \oxt(0,1), \quad 
y \colon \oxt \xrightarrow{t'} \oxt(E') \xrightarrow{\tfrac{y}{t'}} \oxt(1,0)$$
The quiver of $\End_A \calt$ is now
$$\diagram
 & \oxt \otimes_k\, \chi_-  \dto^{\mathbf{t}_-}& & \\
 \oxt \rto^(.3){\mathbf{t}}& A \otimes_{\Xt} \oxt(E) \rto<.5ex>^{\mathbf{X}} \rto<-.5ex>_{\mathbf{Y}}& A \otimes_{\Xt} \oxt(1,0) 
\dto<.5ex>^{\bfy^{\dagger}_-} \dto<-.5ex>_{\bfx^{\dagger}_-}
\rto<.5ex>^(.6){\bfx^{\dagger}} \rto<-.5ex>_(.6){\bfy^{\dagger}}& \oxt(1,1) \\
 & & \oxt(1,1) \otimes_k \chi_- & 
\enddiagram$$
where $\bfx^{\dagger},\bfy^{\dagger},\bfx^{\dagger}_-,\bfy^{\dagger}_-$ are as in the $\PP^1 \times \PP^1$ case, and 
\begin{equation*}
 \mathbf{t} = (t \ t'), \quad \mathbf{t}_- = (t \ -t'), \quad
\mathbf{X} = 
\begin{pmatrix}
 \tfrac{x}{t} & 0 \\ 
 0 & \tfrac{x'}{t'}
\end{pmatrix}
\quad\mathbf{Y} = 
\begin{pmatrix}
  0 &  \tfrac{y'}{t}\\ 
 \tfrac{y}{t'} & 0
\end{pmatrix}
\end{equation*}
The relations now are
$$  \mathbf{X} \bfy^{\dagger} = \mathbf{Y} \bfx^{\dagger}, \quad \mathbf{X} \bfy^{\dagger}_- = - \mathbf{Y} \bfx^{\dagger}_-, \quad 
0 = \mathbf{t}\mathbf{X}\bfx^{\dagger}_- = \mathbf{t}\mathbf{Y}\bfy^{\dagger}_- =
\mathbf{t}_-\mathbf{X}\bfx^{\dagger}  = \mathbf{t}_-\mathbf{X}\bfx^{\dagger} $$
\end{eg}

\begin{eg}  \label{epolygonram}  \mlabel{epolygonram}
Let $G = \Z/p \times \Z/p$ say with generators $g_1,g_2$. Let $\zeta$ be a primitive $p$-th root of unity. Now $G$ acts on $\Xt = \PP^2_{x:y:z}$ by $g_1:(x:y:z) \mapsto (\zeta x:y:z), g_2:(x:y:z) \mapsto (x:\zeta y:z)$. The quotient $\Xt/G \simeq \PP^2_{u:v:w} =: X$. The ramification divisors on $X$ are the coordinate lines $u=0, v=0, w=0$ and the ramification indices here are all $p$. We will not dwell on this case as the canonical tilting bundle on the corresponding weighted projective plane is 2-hereditary. Let $\Xt_i, i =1,2,3$ be the blowup of $\Xt$ at $i$ of the coordinate points $(1:0:0),(0:1:0)$ or $(0:0:1)$, and let $E_1,\ldots,E_i$ be the corresponding exceptional curves. Since $G$ fixes the coordinate points, it acts on $\Xt_i$ and the quotient is easily seen to be the blowup $X_i$ of $\PP^2_{u:v:w}$ at $i$ coordinate points. The ramification divisor $D$ on $X_i$ consists of $i+3$ lines arranged in a polygon, namely, the $i$ exceptional divisors together with the strict transforms of the coordinate lines. Let $H \subset \Xt_i$ be the pullback of the generic line and let $A_i = \calo_{\Xt_i} \# G$ as usual. An explicit computation checking the condition (*) in Theorem~\ref{tskewisGL} shows that $A_i$ is a GL-order associated to the weighted projective surface $(X_i, \frac{1}{p}D)$. 
\end{eg}

\begin{cor}  \label{cpolygonram}  \mlabel{cpolygonram}
We use the notation in \ref{epolygonram}. Let $\calt$ be the 2-hereditary tilting bundle on $\Xt_i$ given in Theorem~\ref{tdpbundles}. Then $A_i \otimes_{\Xt_i} \calt$ is a basic 2-hereditary tilting bundle on $A_i$. It is the direct sum of $p^2(i+3)$ line bundles.
\end{cor}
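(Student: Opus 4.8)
The plan is to obtain $A_i\otimes_{\Xt_i}\calt$ by feeding the tilting bundle of Theorem~\ref{tdpbundles} into the skew group algebra machinery of Section~\ref{sgroup}. First I would observe that $\Xt_i$ is the blowup of $\PP^2$ at $i\le 3$ of the coordinate points, and that these points are in general position in the sense of Definition~\ref{dgenpos} (with only $\le 3$ points there is nothing to violate beyond the non-collinearity of the three coordinate points). Hence Theorem~\ref{tdpbundles} applies and produces a $2$-hereditary tilting bundle $\calt=\bigoplus_j\calt_j$ on $\Xt_i$ whose $i+3$ indecomposable summands $\calt_j$ are line bundles with pairwise distinct divisor classes, read off directly from the explicit lists there.

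Next I would check that $\calt$ is $G$-stable and, more precisely, that each $\calt_j$ admits a $G$-equivariant structure. Since $G$ fixes each coordinate point it fixes each exceptional curve $E_k$ and acts trivially on $\Pic\Xt_i$, so $g^*\calt_j\simeq\calt_j$ for all $g\in G$; moreover every divisor class occurring in $\calt$ has a $G$-invariant representative (take $H$ to be the strict transform of a coordinate line, and recall the $E_k$ are $G$-invariant), which supplies the equivariant structure. Proposition~\ref{pGstabletilt} then immediately gives that $A_i\otimes_{\Xt_i}\calt$ is a $2$-hereditary tilting bundle on $A_i$.

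It remains to decompose it into indecomposables, count them, and verify it is basic. For each $j$ we have $\mathrm{Stab}_G\,\calt_j=G$, which is (trivially) a normal abelian subgroup of $G$, and $\calt_j$ is End-simple with a $G$-equivariant structure $\calt_j^{(1)}$, so Theorem~\ref{tfindbasic}(ii), applied with $H=G$ and hence $B=\calo_{\Xt_i}\#G=A_i$, yields
$$ A_i\otimes_{\Xt_i}\calt_j \;\simeq\; \bigoplus_{\chi\in G^*} \calt_j^{(1)}\otimes_k\chi $$
as a direct sum of $p^2$ pairwise non-isomorphic indecomposable $A_i$-modules. Each such summand has underlying $\calo_{\Xt_i}$-sheaf $\calt_j$, hence $\calo_{X_i}$-rank $p^2$, hence $A_i$-rank $1$: it is a line bundle on $A_i$. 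Finally, Theorem~\ref{tfindbasic}(i) shows that the summands arising from $\calt_j$ and from $\calt_{j'}$ with $j\ne j'$ are non-isomorphic, since $\calt_{j'}\not\simeq g^*\calt_j=\calt_j$ as the divisor classes differ. Assembling, $A_i\otimes_{\Xt_i}\calt=\bigoplus_j\bigoplus_{\chi\in G^*}\calt_j^{(1)}\otimes_k\chi$ is a basic $2$-hereditary tilting bundle which is a direct sum of $(i+3)p^2=p^2(i+3)$ line bundles.

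The step I expect to be the genuine (if mild) obstacle is the verification that each $\calt_j$ really does carry a $G$-equivariant structure: a priori a $G$-stable bundle only admits one after killing a Schur-multiplier class in $H^2(G,k^*)$, which is nonzero for $G=\Z/p\times\Z/p$. This obstruction vanishes here because $\Xt_i$ is toric, $G$ lies inside the torus, and each $\calt_j$ is supported on a torus-invariant divisor class; everything else is bookkeeping around Proposition~\ref{pGstabletilt} and Theorem~\ref{tfindbasic}.
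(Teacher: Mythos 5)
Your proposal is correct and follows essentially the same route as the paper: establish that the (line bundle) summands of $\calt$ are $G$-stable and admit $G$-equivariant structures because $G$-invariant divisors (exceptional curves and transforms of coordinate lines) generate $\Pic \Xt_i$, then invoke Proposition~\ref{pGstabletilt} and Theorem~\ref{tfindbasic}; you merely spell out the counting and basicness bookkeeping that the paper leaves implicit. The only cosmetic slip is calling the strict transform of a coordinate line a representative of the class $H$ (for $i\geq 2$ one should use the total transform, or simply note that the invariant classes generate $\Pic$), which does not affect the argument.
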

\begin{proof}
First note that all line bundles on $\Xt_i$ are $G$-stable, and in fact, they all can be given $G$-equivariant structures. Indeed, if $D$ is a curve which $G$ leaves invariant (that is, $g(D) = D$ for all $g \in G$), then the action of $G$ on the constant sheaf of rational funtions $\mathcal{K}$ restricts to a $G$-action on $\calo_{\Xt_i}(D)$. Now the exceptional curves $E_j$ and the strict transform $H'$ of a coordinate line are all $G$-invariant. These generate the Picard group so every line bundle has a $G$-equivariant structure. The rest of the corollary now follows from  Proposition~\ref{pGstabletilt} and Theorem~\ref{tfindbasic}. 
\end{proof}
\vs

Although we will not compute explicitly the associated 2-hereditary algebras here, we will describe the general method for the computation. Suppose that $G$ is an abelian group acting on a smooth projective variety $\Xt$ and $D,D'$ are $G$-invariant divisors. Note that the tilting bundle $\calt$ in Corollary~\ref{cpolygonram} is a direct sum of line bundles of $\oxt(D)$ with $D$ a $G$-invariant divisor. It is easy to calculate $\Hom_A(A \otimes_{\Xt} \oxt(D), A \otimes_{\Xt} \oxt(D')$. Indeed in this case, $G$ acts naturally on $k(X)$ and so induces a $G$-equivariant structure on $\oxt(D),\oxt(D')$ and hence, $G$-action on $\Hom_{\Xt}(\oxt(D),\oxt(D'))$. Theorem~\ref{tfindbasic} then shows that $\Hom_A(A \otimes_{\Xt} \oxt(D), A \otimes_{\Xt} \oxt(D')$ is the direct sum of 
\begin{multline*}
\Hom_A( \oxt(D) \otimes_k \chi, \oxt(D') \otimes_k \chi') = 
\Hom_{\Xt}( \oxt(D) \otimes_k \chi, \oxt(D') \otimes_k \chi')^G 
\\ 
= \left(\Hom_{\Xt}( \oxt(D) , \oxt(D') ) \otimes_k \chi^{-1} \otimes_k \chi'\right)^G\end{multline*}
as $\chi,\chi'$ range over the character group $G^*$. This explains neatly, why the relations in Example~\ref{econicramalg} divide up into symmetric algebra type relations and exterior algebra type relations. They correspond to the decomposition of $H^0(\calo_{\Xt}(1,1))$ into symmetric and anti-symmetric tensors. 

\section{Generating the derived category}  \label{sgen}  \mlabel{sgen}

Lerner and Oppermann give a set of generators for the derived category of a weighted projective variety \cite[(proof of) Theorem~1.2]{LO}. However, the result is not explicitly stated and the description is more algebraic than geometric. In this section, we give a more constructive proof of the result using the Koszul complex, and present it in our geometric notation.

Let $A$ be a standard GL-order on $X$ associated to the weighted projective variety $(X,\sum_{i=1}^r (1 - \tfrac{1}{p_i})D_i)$. We need some notation to describe the generators. Let $I \subseteq \{1,\ldots, r\}$. We define $D_I := \cap_{i \in I} D_i$ and 
$$ \calo_{D_I} := \coker\left( \phi:\bigoplus_{i \in I} \calo_A(-\tfrac{1}{p_i}D_i) \lm \calo_A \right)$$
where $\phi$ is induced by inclusion of line bundles. An elementary explicit computation shows that as a sheaf on $X$, $\calo_{D_I}$ is just the structure sheaf on $D_I$ so there is no clash in notation and in particular, it is zero if and only if $D_I$ is empty. 

We have the following {\em Koszul resolution} of $\calo_{D_I}$ which is standard. 

\begin{lemma}  \label{lkoszul}  \mlabel{lkoszul}
There exists a chain complex $K^I_{\bullet}$ and a quasi-isomorphism $K^I_{\bullet} \lm \calo_{D_I}$ whose terms are given by 
$$ K^I_{m} := \bigoplus_{J \subseteq I, |J| = m} \calo_A(-\sum_{j \in J} \tfrac{1}{p_j}D_j).$$
\end{lemma}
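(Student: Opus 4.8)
The plan is to construct the complex $K^I_\bullet$ explicitly as the Koszul complex on the $|I|$ maps $\calo_A(-\tfrac{1}{p_i}D_i) \hookrightarrow \calo_A$ (for $i \in I$), and then verify that it resolves $\calo_{D_I}$ by a local computation. Writing $I = \{i_1, \ldots, i_n\}$, the key point is that, locally at any closed point $x \in X$, the inclusions $\calo_A(-\tfrac{1}{p_i}D_i) \hookrightarrow \calo_A$ become (after applying the invertible-bimodule auto-equivalences of Section~\ref{sGLorders}, i.e. multiplication by the ideals $I_i$) multiplication by local equations $f_i$ cutting out $D_i$. By the simple normal crossing hypothesis on $\cup D_i$, these $f_i$ form a regular sequence in the local ring $\calo_{X,x}$, and hence — since $\calo_A(-\tfrac{1}{p_i}D_i)$ and $\calo_A$ are locally free $\calo_X$-modules of the same rank, with the maps being multiplication by $f_i$ on each $\calo_X$-summand after trivialising — the Koszul complex on them is exact except at the top, where its cohomology is $\calo_A / (f_{i_1}, \ldots, f_{i_n}) \calo_A$, which one identifies with $\calo_{D_I}$ as a sheaf.

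The steps, in order, are: (1) define $K^I_m$ as stated, with differential the usual Koszul differential $d(e_{j_1} \wedge \cdots \wedge e_{j_m}) = \sum_k (-1)^{k} (\text{inclusion}_{j_k}) \cdot (e_{j_1} \wedge \cdots \widehat{e_{j_k}} \cdots \wedge e_{j_m})$, where the inclusion maps land the summand $\calo_A(-\sum_{j \in J} \tfrac{1}{p_j}D_j)$ inside $\calo_A(-\sum_{j \in J \setminus \{j_k\}} \tfrac{1}{p_j}D_j)$ using Proposition~\ref{plinebundles} (note $p_j \geq 2$ so the relevant $l_j$ are in range) — one should check $d^2 = 0$, which is formal from the sign conventions since all the $1$-step inclusions pairwise commute inside the constant sheaf $M_p(k(X))$; (2) construct the augmentation $K^I_0 = \calo_A \to \calo_{D_I}$ as the defining cokernel map from the statement's definition of $\calo_{D_I}$; (3) reduce exactness to a complete-local statement at each closed point $x \in X$, since exactness of a complex of coherent sheaves is checked on stalks; (4) at such an $x$, use the Morita/auto-equivalence machinery together with the standard GL-order description to identify the complex with a direct sum of shifted ordinary Koszul complexes on the regular sequence $\{f_i : i \in I, x \in D_i\}$ in $\calo_{X,x}$ (those $i$ with $x \notin D_i$ contribute isomorphisms and can be discarded), and invoke the classical fact that a Koszul complex on a regular sequence is a resolution of the quotient; (5) identify that quotient with the stalk of $\calo_{D_I}$, matching the ``elementary explicit computation'' already invoked in the text showing $\calo_{D_I}$ is the structure sheaf of $D_I$.

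The main obstacle is step~(4): bookkeeping the interaction between the $A$-module structure (i.e. the various $T_{p_i}(D_i)$ tensor factors) and the line-bundle twists $\calo_A(-\sum \tfrac{1}{p_j} D_j)$, so as to see cleanly that, locally, the maps really are multiplication by a regular sequence of length equal to the number of $D_i$ through $x$. Concretely, one wants to show that the auto-equivalences $\calm \mapsto \prod_i I_i^{-l_i} \otimes_A \calm$ turn the $l_i = 1$ inclusion into the inclusion $\calo_A(-D_i) \hookrightarrow \calo_A$, i.e. multiplication by a local equation of $D_i$ on each $\calo_X$-summand of $\calo_A$; since the tensor factors $T_{p_j}(D_j)$ for $j \neq i$ and the factors of $\calo_A$ are locally free over $\calo_X$ and the twist only affects the $i$-th slot, this decouples the construction into $r$ independent one-variable situations, after which the simple normal crossing hypothesis (which, as recorded in the proof of Proposition~\ref{pGLorders}, is exactly the general position hypothesis of \cite{IL}) guarantees regularity of the sequence. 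Everything else is the standard homological algebra of Koszul complexes, which I would not spell out.
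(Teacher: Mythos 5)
Your high-level plan (define the complex with the usual Koszul signs, check exactness stalkwise, and feed in the simple normal crossing hypothesis through regular sequences) is a workable alternative to the paper's proof, which instead inducts on $|I|$: there $K^{I\cup\{j\}}_{\bullet}$ is the total complex of $K^I(-\tfrac{1}{p_j}D_j)_{\bullet} \lm K^I_{\bullet}$, so by induction one only has to check that the single map $\psi\colon \calo_{D_I}(-\tfrac{1}{p_j}D_j) \lm \calo_{D_I}$ is injective, a one-line local computation. But your execution of step (4) -- the step you yourself single out as the crux -- is wrong as stated. The inclusion $\calo_A(-\tfrac{1}{p_i}D_i) \hookrightarrow \calo_A$ is \emph{not} multiplication by a local equation $f_i$ of $D_i$ on every $\ox$-summand of $\calo_A$, and no auto-equivalence $\prod_i I_i^{-l_i}\otimes_A(-)$ converts it into $\calo_A(-D_i)\hookrightarrow\calo_A$. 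Concretely, writing $\calo_A = \ox^{p_1}\otimes_X\cdots\otimes_X\ox^{p_r}$, one computes $J_i \otimes_{T_{p_i}(D_i)} \ox^{p_i} \simeq \ox(-D_i)\oplus\ox^{\oplus(p_i-1)}$ as a subsheaf of $\ox^{p_i}$, so the inclusion is an isomorphism on every $\ox$-summand of $\calo_A$ except those in one slice of the $i$-th tensor slot, where it is multiplication by $f_i$; its cokernel is supported on $D_i$ with $\ox$-rank $\prod_{j\neq i}p_j$, whereas multiplication by $f_i$ on all summands (equivalently the map $\calo_A(-D_i)\hookrightarrow\calo_A$) has cokernel $\calo_A\otimes_X\calo_{D_i}$ of rank $\prod_j p_j$. (Applying $I_i^{p_i-1}\otimes_A(-)$ to the given inclusion yields $\calo_A(-D_i)\hookrightarrow\calo_A(-\tfrac{p_i-1}{p_i}D_i)$, not $\calo_A(-D_i)\hookrightarrow\calo_A$.) Consequently your identification of the degree-zero homology with $\calo_A/(f_{i_1},\ldots,f_{i_n})\calo_A$ is not correct: that sheaf has rank $\prod_j p_j$ along $D_I$, while $H_0(K^I_\bullet)=\coker\phi=\calo_{D_I}$ is strictly smaller since each $p_i\geq 2$.

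The strategy can be repaired, after which the rest of your outline goes through: locally at $x$, split $\calo_A$ into its $\prod_j p_j$ line-bundle $\ox$-summands indexed by tuples $(a_1,\ldots,a_r)$; the complex $K^I_\bullet$ splits accordingly, and the summand indexed by $a$ is the tensor product over $i\in I$ of the two-term complexes $\ox(-D_i)\xrightarrow{f_i}\ox$ (when $a_i$ is the distinguished index of the $i$-th slot) or $\ox\xrightarrow{\sim}\ox$ (otherwise). Every summand containing an isomorphism factor is contractible, and the remaining summands are honest Koszul complexes on $(f_i)_{i\in I}$, which is a regular sequence in $\calo_{X,x}$ by the snc hypothesis (and contains a unit when $x\notin D_I$). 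This gives $H_m(K^I_\bullet)=0$ for $m>0$, while $H_0=\coker\phi$ holds by construction because the degree-one differential is $\phi$ up to sign. Note that this multi-index bookkeeping is exactly what the paper's inductive cone argument is designed to avoid.
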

\begin{proof}
Probably the easiest standard proof which applies in this setting is to use induction on $|I|$. If $j \notin I$, then $K^{I \cup\{j\}}_{\bullet}$ is just the total complex of the bicomplex $K^I(-\tfrac{1}{p_j}D_j)_{\bullet} \lm K^I_{\bullet}$. By induction, it is quasi-isomorphic to the complex 
$$ 0 \lm \calo_{D_I}(-\tfrac{1}{p_j}D_j) \xrightarrow{\psi} \calo_{D_I} \lm 0.$$
Now $\psi$ is injective by local computation. The lemma follows. 
\end{proof}
\vs

We omit the easy proof of the following. 

\begin{lemma}  \label{lgenerators}  \mlabel{lgenerators}
 Let $\call_1,  \call_2, \ldots$ be a set of generators for $\Coh X$. Then $A \otimes_X \call_1, A \otimes_X \call_2,\ldots$ is a set of generators for $A-\mo$. 
\end{lemma}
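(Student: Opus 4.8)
The plan is to play off the adjunction between extension and restriction of scalars along the central inclusion $\ox \hookrightarrow A$. Recall that $A$ is a standard GL-order, hence a direct sum of line bundles on $X$, in particular locally free of finite rank over $\ox$; thus the functor $A \otimes_X -\colon \Coh X \lm A-\mo$ is exact, preserves coherence, and is left adjoint to the forgetful functor $\rho\colon A-\mo \lm \Coh X$, so for any $A$-module $\calm$ the counit $\epsilon_{\calm}\colon A \otimes_X \rho\calm \lm \calm$, $a \otimes m \mapsto am$, is a surjection of $A$-modules (it carries $1 \otimes m$ to $m$), with coherent kernel whenever $\calm$ is coherent. Since $A \otimes_X -$ and $\rho$ are both exact, the adjunction also descends to the bounded derived categories as a natural isomorphism
$$\Rhom_A(A \otimes_X \mathcal{F}, \calm) \;\cong\; \Rhom_X(\mathcal{F}, \rho\calm), \qquad \mathcal{F}\in D^b_c(X),\ \calm \in D^b_c(A).$$

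First I would note that $A \otimes_X -$, being triangulated, sends the thick subcategory generated by $\{\call_i\}$, which is all of $D^b_c(X)$ by hypothesis, into the thick subcategory $\mathcal{C} := \langle A \otimes_X \call_i \rangle \subseteq D^b_c(A)$; hence $A \otimes_X \mathcal{F} \in \mathcal{C}$ for every $\mathcal{F}\in D^b_c(X)$. It remains to see $\mathcal{C} = D^b_c(A)$. For this I would reduce, by truncation and induction on cohomological amplitude, to the case of a single coherent $A$-module $\calm$, and then iterate the counit: $\epsilon_{\calm}$ exhibits $\calm$ as a quotient of $A \otimes_X \rho\calm$ with coherent kernel $\calk_1$, to which the same applies, yielding the canonical relatively projective resolution $\cdots \to A \otimes_X \rho\calk_1 \to A \otimes_X \rho\calm \to \calm$ with coherent terms. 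Because any $A$-projective resolution already consists of relatively projective modules, the relative projective dimension of $\calm$ is at most $\operatorname{gldim} A$, which is finite by Proposition~\ref{pGLorders}(ii); so this resolution can be truncated after finitely many steps, its last syzygy being relatively projective, i.e.\ a summand of some $A \otimes_X \mathcal{F}$ with $\mathcal{F}$ coherent. Every term $A \otimes_X(\text{coherent sheaf})$ lies in $\mathcal{C}$ by the previous paragraph, thick subcategories are closed under summands and cones, and unwinding the (now finite) resolution places $\calm$ in $\mathcal{C}$.

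The only genuinely delicate ingredient is the finiteness used in that last step — that $A$ has finite global dimension over the regular sheaf of rings $\ox$, so that the relative resolution terminates; everything else is routine homological bookkeeping. I would add, as an alternative and perhaps slicker route, the orthogonality argument: the displayed isomorphism shows that if $\calm\in D^b_c(A)$ satisfies $\Rhom_A(A \otimes_X \call_i,\calm)=0$ for all $i$, then $\Rhom_X(\call_i,\rho\calm)=0$ for all $i$, whence $\rho\calm=0$ since the $\call_i$ generate $D^b_c(X)$, and hence $\calm=0$ as $\rho$ is conservative; since $A$ has finite global dimension, $D^b_c(A)=\operatorname{Perf}(A)$ is the category of compact objects of $D(\qcoh\, A)$, each $A \otimes_X \call_i$ is compact, $\rho$ preserves coproducts, and Neeman's localisation theorem turns this orthogonality into the statement that the $A \otimes_X \call_i$ classically generate, i.e.\ $\mathcal{C}=D^b_c(A)$.
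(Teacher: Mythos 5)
Your opening adjunction and the surjectivity of the counit are fine, and in fact they already contain the whole of the paper's (omitted) argument: since the $\call_i$ generate $\Coh X$, there is a surjection $\bigoplus_j \call_{i_j} \twoheadrightarrow \rho\calm$ from a finite direct sum, and applying the exact functor $A\otimes_X -$ and composing with the counit exhibits any coherent $A$-module $\calm$ as a quotient of a finite direct sum of the $A\otimes_X\call_i$; via the adjunction this also gives the only property the lemma is used for in Theorem~\ref{tgen}, namely that $\Hom_A(A\otimes_X\call_i,\calm)=\Hom_X(\call_i,\rho\calm)=0$ for all $i$ forces $\calm=0$. Your main argument, however, has a genuine gap at the truncation step: the claim that the relative projective dimension of $\calm$ is at most $\operatorname{gldim}A$ ``because any $A$-projective resolution consists of relatively projective modules'' is a non sequitur ($A-\mo$ has no projective objects at all, and even stalkwise an absolute projective resolution is not an admissible, i.e.\ $\ox$-split, resolution, which is what one would need to bound the relative dimension; also Proposition~\ref{pGLorders}(ii) is a statement about stalks), and the asserted finiteness is in fact false for GL-orders. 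Concretely, let $R$ be the local ring of $X$ at a point of a weighted divisor of weight $2$, $t$ a local equation, and $\Lambda=\mat{R & tR \\ R & R}$ the stalk of $T_2(D)$ there, with column projectives $P_1,P_2$ and simples $S_1,S_2$. Then $\Lambda\otimes_R S_1\cong P_1/tP_1\oplus P_2/tP_2$, the counit vanishes on the second summand (its top is $S_2$), so the first relative syzygy is $\operatorname{rad}(P_1/tP_1)\oplus P_2/tP_2\cong S_2\oplus P_2/tP_2$; symmetrically the relative syzygy of $S_2$ contains $S_1$ as a summand, so every relative syzygy of $S_1$ has a simple direct summand. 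But no simple module is relatively projective: the indecomposable summands of $\Lambda\otimes_R V$, for $V$ finitely generated, are the $P_i$ and the $P_i/t^mP_i$, none of which is simple (Krull--Schmidt after completing). Hence the relative projective dimension of $S_1$ is infinite although $\operatorname{gldim}\Lambda=1$, and your canonical relative resolution never terminates at a relatively projective syzygy.

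Two further remarks. First, ``a set of generators for $\Coh X$'' is used in the paper in the abelian sense (every coherent sheaf is a quotient of a finite direct sum of the $\call_i$; in Theorem~\ref{tgen} the family is that of all line bundles), so your assertion that the thick subcategory generated by the $\call_i$ ``is all of $D^b_c(X)$ by hypothesis'' is an additional claim requiring proof rather than the hypothesis itself. Second, your closing orthogonality argument via the derived adjunction, conservativity of $\rho$, and compact generation of $D(\qcoh A)$ is essentially sound and would salvage a derived-category version of the statement (granting derived generation of $D^b_c(X)$ by the $\call_i$ and the standard identification of $D^b_c(A)$ with the perfect complexes), but it is far heavier machinery than the lemma needs: the one-line counit argument in your first paragraph is the intended ``easy proof''.
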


Before reformulating Lerner-Oppermann's result, we note that any $A$-module $\calm$ can be naturally viewed as an $(A,\ox)$-bimodule, so given a coherent sheaf $\call$ on $X$, $\calm \otimes_X \call$ is naturally an $A$-module too. Let $\text{Pic}^I_{[0,\Delta)}$ be the set of $I$-tuples $\alpha_I = (\alpha_i)_{i \in I}$ with $\alpha_i \in \{0,\frac{1}{p_i},\ldots, 1 - \frac{2}{p_i}\}$. The notation is motivated by the fact that 
$$\alpha_I D^I := \sum_{i \in I} \alpha_i D_i $$
gives the line bundle $\calo_A(\alpha_ID^I)$. Similarly, we define $\text{Pic}^I_{(0,\Delta]}$ be the set of $I$-tuples $\alpha_I = (\alpha_i)_{i \in I}$ with $\alpha_i \in \{\frac{1}{p_i},\ldots, 1 - \frac{1}{p_i}\}$. In this notation, we have the line bundle decomposition of 
\begin{equation}\label{edecomposeA}
A  = \bigoplus_{I \subseteq \{1,\ldots, r\}} \bigoplus_{\alpha_I \in \text{Pic}^I_{(0,\Delta]}} \calo_A(-\alpha_ID^I). 
\end{equation}

\begin{thm}  \label{tgen}  \mlabel{tgen}
For each $I \subseteq \{1,\ldots, r\}$ with $D_I \neq \varnothing$, let $\calt_I \in \Coh D_I$ generate the derived category of $D_I$. Then the set
$$ \Gamma:= \left\{ \calo_{D_I}(-\alpha_ID^I)\otimes_X \calt_I | I \subseteq \{1,\ldots, r\}, D_I \neq \varnothing, \alpha_I \in \text{Pic}^I_{[0,\Delta)} \right\}$$
generates the derived category of $A$. 
\end{thm}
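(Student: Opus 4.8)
The plan is to combine the two results just established in this section: Lemma~\ref{lgenerators}, which promotes a set of generators of $D^b_c(X)$ to a set of generators of $D^b_c(A)$, and the Koszul resolution of Lemma~\ref{lkoszul}. Write $\calc\subseteq D^b_c(A)$ for the smallest thick subcategory containing $\Gamma$; the goal is $\calc=D^b_c(A)$. Since $\calt_\varnothing$ generates $D^b_c(X)=D^b_c(D_\varnothing)$, Lemma~\ref{lgenerators} reduces this to showing $A\otimes_X E\in\calc$ for every $E\in\calt_\varnothing$; and, expanding $A$ into line bundles via \eqref{edecomposeA} (using that $\calc$ is closed under finite direct sums), it suffices to prove the following: for all $I\subseteq\{1,\ldots,r\}$, all $E\in D^b_c(X)$, and all tuples $\alpha_I=(\alpha_i)_{i\in I}$ with $\alpha_ip_i\in\{0,1,\ldots,p_i-1\}$, the object $\calo_A(-\alpha_ID^I)\otimes_X E$ lies in $\calc$. (Since $\calo_A(-\alpha_ID^I)$ is a vector bundle on $X$, $\otimes_X$ here agrees with its derived version; a genuine derived tensor enters only through the sheaves $\calo_{D_I}$.)

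I would prove this by induction on $n(\alpha_I):=\sum_{i\in I}\alpha_ip_i$. If $n(\alpha_I)=0$ then $\alpha_I=0$, and $\calo_A\otimes_X E\in\calc$ because $\calo_A$ is free over $\ox$, so $\calo_A\otimes_X(-)$ is triangulated and carries $D^b_c(X)=\langle\calt_\varnothing\rangle$ into $\langle\calo_A\otimes_X\calt_\varnothing\rangle$, and $\{\calo_A\otimes_X E: E\in\calt_\varnothing\}$ is exactly the $I=\varnothing$ part of $\Gamma$. For the inductive step, discarding the indices with $\alpha_i=0$ we may assume $\alpha_i\geq\tfrac1{p_i}$ for all $i\in I$; put $\gamma_i:=\alpha_i-\tfrac1{p_i}$, so $\gamma_I\in\text{Pic}^I_{[0,\Delta)}$. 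Tensoring the Koszul quasi-isomorphism $K^I_\bullet\xrightarrow{\,\sim\,}\calo_{D_I}$ of Lemma~\ref{lkoszul} with the invertible $A$-bimodule realising the twist $(-\gamma_ID^I)$ — an exact operation — and then derived-tensoring over $X$ with $E$ yields a quasi-isomorphism in $D^b_c(A)$
\[
K^I_\bullet(-\gamma_ID^I)\otimes_X E\;\xrightarrow{\,\sim\,}\;\calo_{D_I}(-\gamma_ID^I)\otimes_X^{\mathbf L}E ,
\]
the left-hand complex being computed termwise (its terms are vector bundles on $X$). Its degree-$m$ term is $\bigoplus_{J\subseteq I,\ |J|=m}\calo_A(-\beta_I^{J}D^I)\otimes_X E$, where $\beta_i^{J}=\gamma_i+\tfrac1{p_i}$ for $i\in J$ and $\beta_i^{J}=\gamma_i$ otherwise, so $n(\beta_I^{J})=n(\gamma_I)+|J|=n(\alpha_I)-|I|+|J|$. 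The top term, $m=|I|$ (so $J=I$), equals $\calo_A(-\alpha_ID^I)\otimes_X E$; every other term has $\beta_i^{J}p_i\in\{0,\ldots,p_i-1\}$ and $n(\beta_I^{J})<n(\alpha_I)$, hence lies in $\calc$ by the induction hypothesis. Brutally truncating the complex $K^I_\bullet(-\gamma_ID^I)\otimes_X E$ then places $\calo_A(-\alpha_ID^I)\otimes_X E$, up to shift, in the thick subcategory generated by those lower terms together with $\calo_{D_I}(-\gamma_ID^I)\otimes_X^{\mathbf L}E$; so only this last object remains to be shown to lie in $\calc$.

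For this supported term, if $D_I=\varnothing$ it is $0$ and there is nothing to do, so assume $D_I\neq\varnothing$. As a sheaf on $X$, $\calo_{D_I}$ is the structure sheaf of $D_I$ (the elementary computation recorded just after its definition), and tensoring with invertible $A$-bimodules — which are vector bundles on $X$ — keeps $\calo_{D_I}(-\gamma_ID^I)$ a coherent $(A,\calo_{D_I})$-bimodule, locally free of finite rank over $\calo_{D_I}$. Hence $F(-):=\calo_{D_I}(-\gamma_ID^I)\otimes_{D_I}(-)$ is a triangulated functor $D^b_c(D_I)\to D^b_c(A)$, while $\calo_{D_I}\otimes_X^{\mathbf L}E$ is a well-defined object of $D^b_c(D_I)$ (because $D_I\subset X$ is cut out by a regular sequence), and by associativity of the derived tensor product $\calo_{D_I}(-\gamma_ID^I)\otimes_X^{\mathbf L}E\cong F\bigl(\calo_{D_I}\otimes_X^{\mathbf L}E\bigr)$. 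Since $\calt_I$ generates $D^b_c(D_I)$ we have $\calo_{D_I}\otimes_X^{\mathbf L}E\in\langle\calt_I\rangle$, hence
\[
\calo_{D_I}(-\gamma_ID^I)\otimes_X^{\mathbf L}E\;\cong\;F\bigl(\calo_{D_I}\otimes_X^{\mathbf L}E\bigr)\;\in\;\bigl\langle\,F(\calt_I)\,\bigr\rangle\;=\;\bigl\langle\,\calo_{D_I}(-\gamma_ID^I)\otimes_X\calt_I\,\bigr\rangle\;\subseteq\;\langle\Gamma\rangle=\calc ,
\]
the last inclusion because $\gamma_I\in\text{Pic}^I_{[0,\Delta)}$, which makes $\calo_{D_I}(-\gamma_ID^I)\otimes_X\calt_I$ one of the generators listed in $\Gamma$. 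This closes the induction; taking $E=\calt_\varnothing$ and letting $I,\alpha_I$ run over the index set of \eqref{edecomposeA} gives $A\otimes_X\calt_\varnothing\in\calc$, whence $\calc=D^b_c(A)$ by Lemma~\ref{lgenerators}.

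I expect the main work to be the routine-but-delicate bookkeeping: verifying that each $\calo_{D_I}(-\gamma_ID^I)$ is genuinely a vector bundle on $D_I$ (so that the functor $F$ is exact and the target of the twisted Koszul quasi-isomorphism is a legitimate object of $D^b_c(A)$), and checking that the twisted Koszul complex has precisely the stated terms with $n(\beta_I^{J})$ dropping strictly on all but the top one — this strict drop is exactly what makes the induction terminate. The one conceptual point I would emphasise is that the boundary object produced by each Koszul truncation always has the form $\calo_{D_I}(-\gamma_ID^I)\otimes_X(-)$ with $\gamma_ip_i\leq p_i-2$, i.e.\ precisely in the range for which $\Gamma$ supplies a generator; this is why no secondary induction on the number $r$ of weighted divisors, and no analysis of orders on the strata $D_I$, is needed — the hypothesis that $\calt_I$ generates $D^b_c(D_I)$ handles the support directly.
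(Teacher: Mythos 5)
Your proposal is correct and is essentially the paper's argument: the same reduction via Lemma~\ref{lgenerators} and the line-bundle decomposition \eqref{edecomposeA}, the same induction on $\sum_i\alpha_i$ driven by the twisted Koszul resolution of Lemma~\ref{lkoszul}, and the same use of exactness of $\calo_{D_I}(-\gamma_ID^I)\otimes_{D_I}(-)$ together with generation of $D^b_c(D_I)$ by $\calt_I$ to absorb the supported term. The only (cosmetic) difference is that you run the induction inside the thick subcategory generated by $\Gamma$, tensoring with arbitrary objects $E$ of $D^b_c(X)$ and hence keeping track of derived tensor products, whereas the paper runs the identical induction inside the thick subcategory $\,{}^{\perp}\calm$ for a test module $\calm$ and only ever tensors with line bundles $\call$ on $X$, thereby sidestepping the $\otimes^{\mathbf L}$ bookkeeping you correctly carry out.
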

\begin{proof}
Let $\calm$ be an $A$-module with $\Ext^m_A(\calo_{D_I}(-\alpha_ID^I)\otimes_X \calt_I, \calm) = 0$ for all $m,I,\alpha_I\in \text{Pic}^I_{[0,\Delta)}$. We wish to show that $\calm=0$ for which it suffices to show that 
$$\,^{\perp}\calm := \{ \mathcal{N} \in D^b_c(A) | \mathbf{R}\Hom_A(\mathcal{N}, \calm) = 0 \} $$
contains a set of generators for $A-\mo$. From Lemma~\ref{lgenerators} and Equation~(\ref{edecomposeA}), it suffices to show 
\begin{claim}  \label{ccontainsgen}  \mlabel{ccontainsgen}
$\,^{\perp}\calm$ contains $\calo_A(-\alpha_ID^I)\otimes_X \call$ for any line bundle $\call$ on $X$, $I \subseteq \{1,\ldots, r\}$ and $\alpha_I\in \text{Pic}^I_{(0,\Delta]}$. 
\end{claim}
By assumption, $\,^{\perp}\calm$ contains $\Gamma$. We first prove 
\begin{lemma} \label{lgenDI}  \mlabel{lgenDI}
$\,^{\perp}\calm$ contains $\calo_{D_I}(-\alpha_ID^I)\otimes_X \call$ for any line bundle $\call$ on $X$, $I \subseteq \{1,\ldots, r\}$ and $\alpha_I\in \text{Pic}^I_{[0,\Delta)}$.
\end{lemma}
\begin{proof}
Note that $\,^{\perp}\calm$ is closed under cones and contains $\calo_{D_I}(-\alpha_ID^I)\otimes_X \calt_I$. Now as a sheaf on $X$, $\calo_{D_I}(-\alpha_ID^I)$ is a line bundle on $D_I$ so $\calo_{D_I}(-\alpha_ID^I) \otimes_X (-)$ is an exact functor on $\Coh D_I$. Since $\calt_I$ generates the derived category of $D_I$, the lemma follows. 
\end{proof}

We continue with the proof of the claim, and hence theorem, by induction on $\sum \alpha_i$. The Koszul resolution from Lemma~\ref{lkoszul} gives an exact sequence
$$ K^I_{\bullet}(-\alpha_ID^I + \sum_{i \in I} \frac{1}{p_i} D_i) \otimes_X \call \lm \calo_{D_I} (-\alpha_ID^I + \sum_{i \in I} \frac{1}{p_i} D_i) \otimes_X \call \lm 0 $$
Now the left most term in homological degree $|I|$ is $\calo_A(-\alpha_ID^I)\otimes_X \call$ whilst the right most term lies in $\,^{\perp}\calm$ by Lemma~\ref{lgenDI}. Furthermore, the middle terms lie in $\,^{\perp}\calm$ by induction so we are done.  
\end{proof}

\section{Examples of 2-hereditary tilting bundles on weighted projective surfaces}  \label{srandom}  \mlabel{srandom}

In this section, we give examples of 2-hereditary tilting bundles on some weighted projective surfaces.

\begin{eg} \label{e11ram}  \mlabel{e11ram}
Let $X = \PP^1 \times \PP^1$ and $D$ be an irreducible (1,1)-divisor on $X$. Then $(X, \frac{1}{p}D)$ is a Fano weighted surface. Let $A$ be the corresponding standard GL-order and $\mathbb{L} = \Z (1,0) + \Z(\frac{1}{p},\frac{1}{p})$. In this case, there is no harm in writing suggestively the functor $(-)(\frac{1}{p}D) = (-)(\frac{1}{p},\frac{1}{p})$. Then $\mathbb{L}$ identifies with the divisor class group of $(X,\tfrac{1}{p}D)$ introduced in Section~\ref{scox} and the set of isomorphism classes of line bundles is $\{\calo_A(a,b) | (a,b) \in \mathbb{L}\}$.  Furthermore, Proposition~3.2ii) reduces to the formula
\begin{equation} \label{ecomputefor11ram}
\shom_A(\calo_A, \calo_A(a,b)) = \ox(\lfloor a \rfloor, \lfloor b \rfloor)
\end{equation}

\end{eg}

\begin{prop}  \label{p11ram}  \mlabel{p11ram}
With the above notation, 
$$\calt = \bigoplus_{i=0}^p \calo_A(i/p,i/p)  \oplus \bigoplus_{i=0}^p \calo_A(i/p,1+i/p)$$
is a 2-hereditary tilting bundle on $(X, \frac{1}{p}D)$. 
\end{prop}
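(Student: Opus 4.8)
The plan is to verify the three defining properties of a 2-hereditary tilting bundle in turn: generation of $D^b_c(A)$, the partial tilting condition $\Ext^{>0}_A(\calt,\calt)=0$, and the vanishing $\Ext^j_A(\calt,\omega_A^{-i}\otimes_A\calt)=0$ for all $i\geq 1$, $j>0$. Since $(X,\tfrac1pD)$ is Fano and all the summands of $\calt$ are line bundles, the natural approach is to reduce everything to cohomology computations on $X=\PP^1\times\PP^1$ via Proposition~\ref{plinebundles} (or its specialisation \eqref{ecomputefor11ram}), together with $\omega_A = A(K_X+\Delta) = A(-2,-2)(\tfrac1pD)$, i.e. $\omega_A^{-1} = A(2,2)(-\tfrac1p,-\tfrac1p)$ in the suggestive notation of Example~\ref{e11ram}.

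First I would check generation. The cleanest route is Theorem~\ref{tgen}: here $r=1$, $D_1 = D$ is an irreducible $(1,1)$-curve (so $D_I = D$ for $I=\{1\}$, and $D_\emptyset = X$), and $D$ is a rational curve, hence $D^b_c(D)$ is generated by $\calo_D$ together with $\calo_D(\text{pt})$, while $D^b_c(X)$ is generated by a standard exceptional sequence of line bundles. So Theorem~\ref{tgen} tells us $D^b_c(A)$ is generated by the $\calo_A$-twists of line bundles on $X$ together with twists $\calo_D(j/p\,D)\otimes_X\call$ for $0\le j\le p-2$. I would then show, by the usual kernel/cokernel arguments with the structure sequence $0\to\calo_A(-\tfrac1pD)\to\calo_A\to\calo_D\to 0$ and its twists, that the category generated by the summands of $\calt$ contains all of these; the key point is that the difference of consecutive summands $\calo_A(i/p,i/p)$ and $\calo_A((i+1)/p,(i+1)/p)$ realises $\calo_D(i/p\,D)$ up to twist, and similarly in the second row, so all the generators from Theorem~\ref{tgen} are captured.

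Next comes the cohomology vanishing, which handles both the partial tilting condition and the 2-hereditary condition simultaneously: by Proposition~\ref{pnheredbundle}/Definition~\ref{dnheredbundle} it suffices to prove $\Ext^j_A(\calt,\omega_A^{-i}\otimes_A\calt)=0$ for all $i,j\ge 0$ with $j>0$. Using \eqref{ecomputefor11ram} and the local–global spectral sequence (Proposition~\ref{plinebundles}(i)), this amounts to showing $H^j\big(\ox(\lfloor a\rfloor,\lfloor b\rfloor)\big)=0$ for $j>0$ whenever $\calo_A(a,b)$ appears as a summand of $\send_A\calt$ twisted by $\omega_A^{-i}$. Concretely, $\mathbb{E}$-type differences of summands have classes in $\{(c/p,d/p): -p\le c,d\le p,\ |c-d|\le p\}$ (for the diagonal row), with an extra $\pm(0,1)$ shift mixing the two rows, and $\omega_A^{-i}$ adds $(2i - i/p, 2i - i/p)$. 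One checks that for every such $(a,b)$ with $i\ge 1$ we have $\lfloor a\rfloor,\lfloor b\rfloor\ge -1$, so by the Künneth formula on $\PP^1\times\PP^1$ the only possible nonvanishing higher cohomology would require one of $\lfloor a\rfloor,\lfloor b\rfloor$ to be $\le -2$, which never happens; and for $i=0$ (the partial tilting case) the relevant $(a,b)$ all have $\lfloor a\rfloor,\lfloor b\rfloor\ge -1$ as well. So all higher $\Ext$ groups vanish.

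**Main obstacle.** The bookkeeping in the last step is where the real work lies: one must enumerate the floors $\lfloor a\rfloor$ for every pair of summands and every $i\ge 1$ and confirm none drops to $-2$ or below in either coordinate. The delicate cases are the ``cross'' differences between $\calo_A(i/p,i/p)$ and $\calo_A(i'/p,1+i'/p)$, where the second coordinate picks up an honest $+1$ or $-1$; here when $i=0$ one gets classes like $(0,-1)$ or $(0,1)$ whose floors are $0$ or $-1$, still harmless, but this is exactly the borderline that must be tracked carefully. I expect no conceptual difficulty — it is a finite, if slightly tedious, case analysis entirely parallel to the del Pezzo computations in Section~\ref{snhereditary} — but it is the step most prone to an off-by-one slip, so I would organise it by first recording $\mathbb{E}(\calt)$ explicitly in $\mathbb{L}$, then noting the single inequality $\lfloor a\rfloor\ge -1$ (resp. in each coordinate) that needs to hold, and verifying it uniformly.
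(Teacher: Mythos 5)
Your proposal follows essentially the same route as the paper's proof: generation is checked via Theorem~\ref{tgen}, with the $\calo_D$-type generators realised as cokernels of maps between consecutive summands, and both the partial tilting and the 2-hereditary conditions are reduced, via Proposition~\ref{plinebundles} and formula~\eqref{ecomputefor11ram}, to vanishing of higher cohomology of line bundles on $\PP^1\times\PP^1$. Two slips to fix in the bookkeeping, neither of which changes the outcome. First, the weight on $D$ is $p$, so $\Delta=(1-\tfrac1p)D$ and hence $\omega_A^{-1}=A(1+\tfrac1p,\,1+\tfrac1p)$, not $A(2-\tfrac1p,\,2-\tfrac1p)$ as you wrote; with the correct twist the floor estimates you need for $r\ge 1$ (first coordinate $\ge 0$, second $\ge -1$) still hold, since each anticanonical twist adds $1+\tfrac1p$ to both coordinates. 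Second, your claim that in the $r=0$ (partial tilting) case all floors are $\ge -1$ fails in one borderline instance: $\shom_A(\calo_A(1,2),\calo_A)=\ox(-1,-2)$ has a coordinate with floor $-2$; this is harmless because $H^{>0}(\ox(-1,-2))=0$ by K\"unneth (the other coordinate being $-1$), but it means the uniform criterion ``both floors $\ge -1$'' must be replaced by the precise K\"unneth criterion in that one case.
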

\begin{proof} We first observe that $\Ext^i_A(\calt, \omega_A^{-r} \otimes_A \calt) = 0$ for $i=1,2, r\geq 0$. Indeed, $\omega_A^{-1} = A(1+1/p,1+1/p)$ so Formula~(\ref{ecomputefor11ram}) and Proposition~\ref{plinebundles} readily give the desired Ext vanishing. 

To establish the generation condition, we let ${\sf C}$ denote the category generated by $\calt$ and show ${\sf C}$ contains the generators of Theorem~\ref{tgen} for appropriate choice of $\mathcal{T}_I$. Using the notation of that theorem, we consider first $I = \varnothing$ and note that $\calt_I = \ox \oplus \ox(0,1) \oplus \ox(1,1) \oplus \ox(1,2)$ works for $\calo_A \otimes_X \calt_I$ is a direct summand of $\calt$. For the other case $I = \{1\}$, we note $D \simeq \PP^1$ and let $\calt_I = \calo_D(2) \oplus \calo_D(3)$. For $j = 0,1,\ldots, p-2, \ \eps = 0,1$, we consider a non-zero map 
$$\iota_{j,\eps}: \calo_A(1 - \tfrac{j+1}{p}, 1 + \eps - \tfrac{j+1}{p})  \lm \calo_A(1 - \tfrac{j}{p},1 + \eps - \tfrac{j}{p})  .$$
Then ${\sf C}$ contains $\coker \iota_{j,\eps}\simeq \calo_{D_I}(-\frac{j}{p}D) \otimes_X \calo_D(2 + \eps)$, which are the other generators listed in Theorem~\ref{tgen}. 
\end{proof} 
\vs

It is easy to compute the endomorphism ring of the 2-hereditary tilting bundle $\calt$ in Proposition~\ref{p11ram}. Let $u,v \in \Hom_X(\ox(0,1),\ox(1,1)), x,y \in \Hom_X(\ox,\ox(0,1))$ be bases and suppose the weighted divisor $D$ is defined by $F(u,v,x,y) = 0$. The basis $x,y$ gives compatible bases $x_i,y_i \in \Hom_A(\calo_A(\tfrac{i}{p},\tfrac{i}{p}), \calo_A(\tfrac{i}{p},1+\tfrac{i}{p}))$. We may similarly pick coherent bases $t_{ij} \in \Hom_A(\calo_A(\tfrac{i}{p},\tfrac{j}{p}), \calo_A(\tfrac{i+1}{p},\tfrac{j+1}{p}))$. These give the generators for $\End_A \calt$. We illustrate for the case $p=2$ with, as is customary, subscripts on arrows dropped

$$\diagram
\calo_A(0,1) \drrto<.5ex>^(.35)u\drrto<-.5ex>_(.35)v \rto^t & \calo_A(\tfrac{1}{2},\tfrac{3}{2}) \rto^t & \calo_A(1,2) \\
\calo_A \rto_(.4)t \uto<.5ex>^(.35)x\uto<-.5ex>_(.35)y & \calo_A(\tfrac{1}{2},\tfrac{1}{2}) \rto_t \uto<.5ex>^(.3)x\uto<-.5ex>_(.3)y & 
\calo_A(1,1) \uto<.5ex>^(.35)x\uto<-.5ex>_(.35)y 
\enddiagram$$
For general $p$, the relations are 
\begin{equation}
 xt = tx, \quad yt = ty, \quad xuy = yux, \quad xvy = yvx, \quad t^p = F(u,v,x,y).
\end{equation}
\vs

We turn now our attention to weighting the blowup $X$ of $\PP^2$ at a single point. Let $E \subset X$ be the exceptional curve and $H \subset X$ be the pullback of a generic line in $\PP^2$.  Note that $X \simeq \PP_{\PP^1}(\calo \oplus \calo(-1))$ and from this viewpoint we have $\ox(E) = \calo_{X/\PP^1}(1)$ and the generic fibre $F$ of $\pi: X \lm \PP^1$ is linearly equivalent to $H-E$. We need the following standard cohomology computations. 

\begin{lemma}  \label{lcohomologyF1}  \mlabel{lcohomologyF1}
$0 = H^1(\ox(aH + bE)) = H^2(aH + bE)$ if any of the following hold:
\begin{enumerate}
 \item $a+b >-1$ and $b \leq 1$,
 \item $a+b=-1$, or
 \item $a=-2, b=0$.
\end{enumerate}
\end{lemma}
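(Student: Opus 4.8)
The plan is to exploit the ruling $\pi\colon X \lm \PP^1$ recalled just above the lemma, for which $X = \PP_{\PP^1}(\calo\oplus\calo(-1))$, $\ox(E)=\calo_{X/\PP^1}(1)$ and a fibre satisfies $F\sim H-E$. Writing $aH+bE = aF+(a+b)E$ and setting $c=a+b$, the line bundle in question is $\call = \pi^*\calo_{\PP^1}(a)\otimes_X\calo_{X/\PP^1}(c)$, since $\ox(F)=\pi^*\calo_{\PP^1}(1)$. Because $\PP^1$ is a curve, the Leray spectral sequence for $\pi$ has $E_2^{p,q}=0$ for $p\geq 2$, hence degenerates, and $H^1(X,\call) = H^1(\PP^1,\pi_*\call)\oplus H^0(\PP^1,R^1\pi_*\call)$ while $H^2(X,\call)=H^1(\PP^1,R^1\pi_*\call)$. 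By the projection formula $R^q\pi_*\call = \calo_{\PP^1}(a)\otimes R^q\pi_*\calo_{X/\PP^1}(c)$, so everything reduces to the standard computation of $R^q\pi_*\calo_{X/\PP^1}(c)$ on a $\PP^1$-bundle together with cohomology of line bundles on $\PP^1$.

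First I would record, with $\mathcal{E}=\calo\oplus\calo(-1)$: for $c\geq 0$, $\pi_*\calo_{X/\PP^1}(c)=S^c\mathcal{E}=\bigoplus_{j=0}^{c}\calo_{\PP^1}(-j)$ and $R^1\pi_*\calo_{X/\PP^1}(c)=0$; for $c=-1$, both $\pi_*$ and $R^1\pi_*$ vanish; and for $c\leq -2$, using $\omega_{X/\PP^1}=\calo_{X/\PP^1}(-2)\otimes\pi^*\calo_{\PP^1}(-1)$ (which follows from $K_X=-3H+E$ and $\omega_{\PP^1}=\calo(-2)$) and relative Serre duality, $\pi_*\calo_{X/\PP^1}(c)=0$ and $R^1\pi_*\calo_{X/\PP^1}(c)=\bigl(\pi_*(\calo_{X/\PP^1}(-c-2)\otimes\pi^*\calo(-1))\bigr)^{\vee}=\bigoplus_{j=0}^{-c-2}\calo_{\PP^1}(1+j)$; in particular this is $\calo_{\PP^1}(1)$ when $c=-2$.

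Then I would dispatch the three cases. In (i), $a+b>-1$ forces $c\geq 0$, so $R^1\pi_*\call=0$ annihilates $H^2$ and the $H^0$-term, while $\pi_*\call=\bigoplus_{j=0}^{c}\calo_{\PP^1}(a-j)$ has vanishing $H^1$ precisely because $a-j\geq a-c=-b\geq -1$ for $0\leq j\leq c$. In (ii), $c=-1$, so $R\pi_*\call=0$ and all higher cohomology of $\call$ vanishes. In (iii), $c=-2$ and $a=-2$, so $\pi_*\call=0$ and $R^1\pi_*\call=\calo_{\PP^1}(-2)\otimes\calo_{\PP^1}(1)=\calo_{\PP^1}(-1)$, whose $H^0$ and $H^1$ both vanish; alternatively one may note here $\call=\ox(-2H)=f^*\calo_{\PP^2}(-2)$ for the contraction $f\colon X\lm\PP^2$, whence $Rf_*\ox=\calo_{\PP^2}$ gives $H^i(X,\call)=H^i(\PP^2,\calo_{\PP^2}(-2))=0$ for all $i$ directly.

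I do not expect a genuine obstacle: the whole argument is routine once the ruling is used. The only point requiring care is pinning down the conventions — that $\mathcal{E}=\calo\oplus\calo(-1)$, equivalently $\det\mathcal{E}=\calo_{\PP^1}(-1)$, and the consequent formula for $\omega_{X/\PP^1}$ — and these are forced by $K_X=-3H+E$ and $F\sim H-E$ as already stated in the text, so there is nothing delicate left to verify.
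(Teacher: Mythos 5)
Your proposal is correct and follows essentially the same route as the paper: rewrite $aH+bE$ as $aF+(a+b)E$, compute $\pi_*$ and $R^1\pi_*$ via symmetric powers of $\calo\oplus\calo(-1)$ (the paper cites Hartshorne where you use relative Serre duality), and handle $a=-2,b=0$ either by the direct-image computation $R^1\pi_*\ox(-2H)\simeq\calo_{\PP^1}(-1)$ or by the blowdown to $\PP^2$, both of which the paper also mentions. The only nitpick is that the Leray sequence gives a two-step filtration of $H^1(X,\call)$ rather than a direct sum, but this has no bearing on the vanishing argument.
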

\begin{proof}
We let $e = a+b$ so $eE + aF \sim aH + bE$. For $e \geq -1$ we have $R^1\pi_*(\ox(eE + aF)) = 0$ by \cite[Lemma~V.2.4]{Har} (the proof also works when $e=-1$). Hence $H^2(\ox(eE+aF)) = 0$. Also, by the Leray-Serre spectral sequence, projection formula and \cite[Proposition~II.7.11]{Har} we find
$$ H^1(\ox(eE+aF)) = H^1(\pi_*(\ox(eE+aF))) = H^1(S^e(\calo_{\PP^1} \oplus\calo_{\PP^1}(-1)) \otimes_{\PP^1} \calo_{\PP^1}(a)) $$
where $S^e$ denotes the $e$-th symmetric power. Now $S^e=0$ if $e=-1$ so case~ii) follows. For $e>-1$, we may expand $S^e$ and see that cohomology vanishes when $-b=a-e \geq -1$ and case~i) follows. Finally, we can check $0 = H^1(\ox(-2H)) = H^2(\ox(-2H))$ using the Leray-Serre spectral sequence for the blow down. Alternatively, we note that in this case $\pi_*(\ox(-2H)) = 0,\ R^1\pi_*(\ox(-2H)) \simeq \calo_{\PP^1}(-1)$. 
\end{proof}
\vs

\begin{eg}
We let $X$ be the blowup of $\PP^2$ at a single point and weight $X$ at the strict transform $H$ of a line which does not pass through this point. Let $p$ be the weight and $A$ be the corresponding standard GL-order. 
\end{eg}
\begin{prop}  \label{pF1}  \mlabel{pF1}
With the above notation, 
$$\calt = \calo_A(E) \oplus \calo_A \oplus \calo_A(\tfrac{1}{p}H) \oplus \ldots \oplus \calo_A(2H) $$ 
is a 2-hereditary tilting bundle on $X$. 
\end{prop}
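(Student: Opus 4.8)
The plan is to verify directly the two requirements for $\calt$ to be a $2$-hereditary tilting bundle: the vanishing $\Ext^i_A(\calt,\omega_A^{-r}\otimes_A\calt)=0$ for all $i>0$ and all integers $r\geq 0$ (the case $r=0$ is the partial tilting condition, the cases $r\geq 1$ are the $2$-hereditary condition of Definition~\ref{dnheredbundle}), together with the generation of $D^b_c(A)$. Since $X$ is the one-point blowup of $\PP^2$ we have $K_X=-3H+E$, so by Proposition~\ref{pSerredual}, $K_A=K_X+(1-\tfrac1p)H=(-2-\tfrac1p)H+E$, and hence $\omega_A^{-r}\otimes_A\calo_A(D)=\calo_A\bigl(D+r(2+\tfrac1p)H-rE\bigr)$.

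First I would reduce all the Ext vanishing to cohomology of line bundles on $X$. Every summand of $\calt$ is of the form $\calo_A(\tfrac mp H)$ with $0\leq m\leq 2p$, or else $\calo_A(E)$, so by Proposition~\ref{plinebundles} each group $\Ext^i_A\bigl(\calo_A(D),\omega_A^{-r}\otimes_A\calo_A(D')\bigr)$ is $H^i\bigl(X,\ox(D_0)\bigr)$, where $D_0=aH+bE$ is the integral divisor obtained by writing $D'-D-rK_A$ in the normal form $D_0+\tfrac lp H$ with $0\leq l<p$. Running through the four families of pairs of summands ($HH$, $HE$, $EH$, $EE$) one gets in each case a short formula for $(a,b)$ in terms of $r$ and $q:=\lfloor\ast/p\rfloor$, where $\ast$ is the relevant integer ($m_2-m_1+r$, $r-m_1$, $m_2+r$, or $r$). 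Since the $H$-exponents of the summands of $\calt$ span a window of length $2$, one always has $\ast\geq -2p$, so $q\geq -2$, hence $a+b\geq r-2$, while $b\leq 1$ throughout (indeed $b\in\{-r,\,1-r,\,-(r+1)\}$). Consequently, for $r\geq 2$ we always land in case (i) of Lemma~\ref{lcohomologyF1}, and for $r\in\{0,1\}$ there are only finitely many pairs $(a,b)$, which one checks by hand fall into cases (i), (ii) or (iii); either way $H^i(\ox(aH+bE))=0$ for all $i>0$. This settles the tilting and the $2$-hereditary conditions simultaneously.

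For generation I would argue as in the proof of Proposition~\ref{p11ram}, showing that the triangulated subcategory ${\mathsf C}$ generated by $\calt$ contains the generators of Theorem~\ref{tgen}. There is a single weighted divisor, $D_1=H$ — the strict transform of a line missing the centre, so $D_1\cong\PP^1$ and $D_1\cdot H=1$ — so only $I=\varnothing$ and $I=\{1\}$ occur. For $I=\varnothing$ one may take $\calt_\varnothing=\ox(E)\oplus\ox\oplus\ox(H)\oplus\ox(2H)$; this generates $D^b_c(X)$ (it contains $Lf^*D^b_c(\PP^2)=\langle\ox,\ox(H),\ox(2H)\rangle$ and $\calo_E(-1)=\coker(\ox\lm\ox(E))$, whence everything by Orlov's blowup formula), and $\calo_A\otimes_X\calt_\varnothing=\calo_A(E)\oplus\calo_A\oplus\calo_A(H)\oplus\calo_A(2H)$ is literally a direct summand of $\calt$. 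For $I=\{1\}$ and $j=0,\dots,p-2$, an elementary local computation (cf. \cite[Proposition~3.1]{C}, exactly as used for Proposition~\ref{p11ram}) identifies the cokernel of a nonzero map $\calo_A(\tfrac jp H)\lm\calo_A(\tfrac{j+1}p H)$ with $\calo_{D_1}(-\tfrac jp H)\otimes_X\call_j$ for a line bundle $\call_j$ on $D_1\cong\PP^1$; twisting by $\ox(H)$ shows that the cokernel of $\calo_A(\tfrac{p+j}p H)\lm\calo_A(\tfrac{p+j+1}p H)$ is $\calo_{D_1}(-\tfrac jp H)\otimes_X\call_j(1)$. Since $\call_j\oplus\call_j(1)$ generates $D^b_c(\PP^1)$ and $\calo_{D_1}(-\tfrac jp H)\otimes_X(-)$ is exact, ${\mathsf C}$ contains all the $I=\{1\}$ generators of Theorem~\ref{tgen}, and therefore ${\mathsf C}=D^b_c(A)$.

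The main obstacle is the Ext vanishing across the infinite family of twists $\omega_A^{-r}$, $r\geq 0$: one must organise the bookkeeping so that the divisor classes $D_0=aH+bE$ produced by every pair of summands and every $r$ are seen, uniformly, to lie within the reach of Lemma~\ref{lcohomologyF1}. The feature that makes this work is precisely the length-$2$ window of $H$-exponents in $\calt$ (yielding $q\geq-2$, hence $a+b\geq r-2$); with a shorter window one of the cohomology groups would fail to vanish. The generation step is comparatively routine given the template of Proposition~\ref{p11ram}, the only non-formal ingredient being the cokernel identifications above, which are a standard local calculation.
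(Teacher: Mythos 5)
Your Ext-vanishing argument is correct and is essentially the paper's: every group $\Ext^i_A(\calo_A(D),\omega_A^{-r}\otimes_A\calo_A(D'))$ is reduced via Proposition~\ref{plinebundles} to $H^i(\ox(aH+bE))$ and checked against Lemma~\ref{lcohomologyF1}; your bookkeeping with $q=\lfloor \ast/p\rfloor$ is just a reorganisation of the paper's inequalities $a\geq a'+2r+\lfloor r/p\rfloor$, $b=b'-r$, and the case analysis for $r\in\{0,1\}$ does land in cases (i)--(iii) as you assert.

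The gap is in the generation step, in your identification of the cokernels. With the paper's definition $\calo_{D_1}=\coker(\calo_A(-\tfrac1p H)\lm\calo_A)$, applying the exact twist auto-equivalence gives $\coker\bigl(\calo_A(\tfrac jp H)\lm\calo_A(\tfrac{j+1}p H)\bigr)\simeq\calo_{D_1}(\tfrac{j+1}p H)=\calo_{D_1}(-\tfrac{p-1-j}{p}H)\otimes_X\ox(H)$, \emph{not} $\calo_{D_1}(-\tfrac jp H)\otimes_X\call_j$. The fractional twist class modulo integral divisors is a genuine invariant of the $A$-module structure: locally over $T_p(D_1)$ the cokernel sits in a different row of the column module for different classes, and since every line bundle on $D_1\simeq\PP^1$ is restricted from $X$, tensoring by $\call_j$ cannot change the class; your identification holds only when $2j+1\equiv 0 \pmod p$. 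Consequently, as $j$ runs over $0,\dots,p-2$ (in either window) your cokernels realize exactly the classes $\alpha_1\in\{\tfrac1p,\dots,\tfrac{p-1}p\}$, so the generators of Theorem~\ref{tgen} with $\alpha_1=0$, namely $\calo_{D_1}\otimes_X\calt_1$, are never produced (while $\alpha_1=\tfrac{p-1}p$ is produced but not needed). The fix is immediate: also take the cokernels of the nonzero maps $\calo_A(\tfrac{p-1}p H)\lm\calo_A(H)$ and $\calo_A(\tfrac{2p-1}p H)\lm\calo_A(2H)$, which are $\calo_{D_1}\otimes_X\ox(H)$ and $\calo_{D_1}\otimes_X\ox(2H)$, i.e.\ $\calo_{D_1}\otimes(\calo_{\PP^1}(1)\oplus\calo_{\PP^1}(2))$; or simply use all $2p$ consecutive maps in the window $\calo_A,\calo_A(\tfrac1p H),\dots,\calo_A(2H)$, whose cokernels realize every class $\alpha_1$ together with two consecutive twists $\calo_{\PP^1}(d),\calo_{\PP^1}(d+1)$, which generate $D^b_c(\PP^1)$. (Note this is how the paper's Proposition~\ref{p11ram} indexes its maps $\iota_{j,\eps}$: they run downward from the top of the window precisely so the cokernels land on the required classes.) With this correction your proof is complete and coincides with the paper's, which handles generation by the same appeal to Theorem~\ref{tgen} following the template of Proposition~\ref{p11ram}.
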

\begin{proof}
Arguing as in the proof of Proposition~\ref{p11ram} using Theorem~\ref{tgen}, we see that $\calt$ generates the derived category. Let $\calo_A(D), \calo_A(D')$ be two line bundle summands of $\calt$. Going through the various cases we see that $\shom_A(\calo_A(D),\calo_A(D')) \simeq \ox(a'H + b'E)$ where $a'+b' \geq -2$ with equality only when $(a',b') = (-2,0)$. The partial tilting condition follows now from Lemma~\ref{lcohomologyF1}. Suppose now $r>0$ and let 
$$\shom_A(\calo_A(D),\omega_A^{-r} \otimes_A \calo_A(D')) \simeq \ox(aH + bE).$$
Now $\omega_A^{-1} \otimes_A (-)$ is the shift by $(2+\tfrac{1}{p})H -E$ so 
$$a \geq a' + 2r + \lfloor \tfrac{r}{p} \rfloor, \quad b = b' - r .$$
It follows that $a+b > a'+b', \ b < b'$ so Lemma~\ref{lcohomologyF1} gives the desired cohomology vanishing. 
\end{proof}
\vs

Again, $\End_A \calt$ is easy to compute. There are generators i) $x \in \Hom_A(\calo_A(\tfrac{i}{p}H), \calo_A(\tfrac{i+1}{p}H))$ for $i = 0,1,\ldots, 2p-1$, ii)  $y,z \in \Hom_A(\calo_A(\tfrac{j}{p}H), \calo_A((\tfrac{j}{p}+1)H))$ for $j=1, \ldots, p$, iii) $u \in \Hom_A(\calo_A, \calo_A(E))$ and iv) $y',z' \in \Hom_A(\calo_A(E), \calo_A(H))$. 
We draw the quiver in the case $p=2$

$$\diagram
 & \calo_A(\tfrac{1}{2}H) \drto^x \rrto<.5ex>^y \rrto<-.5ex>_z& & \calo_A(\tfrac{3}{2}H) \drto^x & \\
\calo_A \rto^u \urto^x & \calo_A(E) \rto<.5ex>^{y'} \rto<-.5ex>_{z'} & \calo_A(H) \urto^x \rrto<.5ex>^y \rrto<-.5ex>_z& & \calo_A(2H)
\enddiagram$$

The relations are
\begin{equation}
x y = y x, \quad xz = zx, \quad y'z = z'y, \quad uy'x = xy, \quad uz'x = xz.
\end{equation}

\vs
\begin{eg}
We now consider the case where $X = \PP^2$ is weighted on 4 lines $H_1,H_2,H_3,H_4$ in general position, each with weight 2. Let $A$ be the corresponding GL-order and let $H$ be a general line in $X$. As in Section~\ref{scox}, let $\mathbb{L}$ be the abelian group generated by $\tfrac{1}{2}H_1, \tfrac{1}{2}H_2, \tfrac{1}{2}H_3, \tfrac{1}{2}H_4$ modulo the linear equivalence relation $H_1 \sim H_2 \sim H_3 \sim H_4$. There is an additive {\em degree} function $\deg\colon \mathbb{L} \lm \tfrac{1}{2} \Z$ defined by $\deg \tfrac{1}{2}H_i = \tfrac{1}{2}$. The group $\mathbb{L}$ is partially ordered by $D \leq D'$ if $\Hom_A(\calo_A(D),\calo_A(D')) \neq 0$. We write $[0,2H]$ for set of $D \in \mathbb{L}$ with $0 \leq D \leq 2H$ and $(0,2H) = [0,2H] - \{0,2H\}$. We know from \cite[Theorem~6.1]{HIMO} and \cite[Theorem~2.2]{IL} that $\bigoplus_{D \in [0,2H]} \calo_A(D)$ is a tilting bundle on $A$, although not a 2-hereditary one. We use mutations to alter this into a 2-hereditary tilting bundle. To this end, consider the following short exact sequences which define the rank 3 bundles $\Omega$ and $\Xi$ below.
\begin{gather}   \label{eomega}
E_{\Omega}: 0 \lm \Omega \lm \bigoplus_{i=1}^4 \calo_A(H + \tfrac{1}{2}H_i) \xrightarrow{\pi} \calo_A(2H) \lm 0 \\  \label{exi}
E_{\Xi}: 0  \lm \calo_A \xrightarrow{\iota} \bigoplus_{i=1}^4 \calo_A(\tfrac{1}{2}H_i) \lm \Xi \lm 0
\end{gather}
where $\pi, \iota$ are induced by the natural inclusion of line bundles. 
\end{eg}

\begin{thm}  \label{t2222}  \mlabel{t2222}
The bundle 
$$\calt = \Xi \oplus \bigoplus_{D \in (0,2H)} \!\!\calo_A(D)\ \oplus \Omega$$
is a 2-hereditary tilting bundle on $A$. 
\end{thm}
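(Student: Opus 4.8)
The plan is to recognise $\calt$ as a two-step mutation of the tilting bundle $T_0:=\bigoplus_{D\in[0,2H]}\calo_A(D)$ of \cite[Theorem~6.1]{HIMO} and \cite[Theorem~2.2]{IL}: the unique source $\calo_A$ of $T_0$ is mutated to $\Xi$ by means of \eqref{exi}, and the unique sink $\calo_A(2H)$ is mutated to $\Omega$ by means of \eqref{eomega}. Write $M:=\bigoplus_{D\in(0,2H)}\calo_A(D)$, so that $\calt=\Xi\oplus M\oplus\Omega$; note that each $\calo_A(\tfrac12 H_i)$ and each $\calo_A(H+\tfrac12 H_i)$ is a summand of $M$, since $0<\tfrac12 H_i< H+\tfrac12 H_i<2H$ in the partial order. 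On the level of sheaves on $\PP^2$ the maps $\iota$ and $\pi$ of \eqref{exi}, \eqref{eomega} are given by the four linear forms cutting out $H_1,\dots,H_4$; because these lines are in general position, $\iota$ is a fibrewise split injection and $\pi$ is surjective, so $\Omega$ and $\Xi$ are locally projective $A$-modules, i.e.\ genuine bundles. Generation is then immediate: \eqref{exi} places $\calo_A$ in the thick subcategory generated by the $\calo_A(\tfrac12 H_i)$ and $\Xi$, and \eqref{eomega} places $\calo_A(2H)$ in the thick subcategory generated by the $\calo_A(H+\tfrac12 H_i)$ and $\Omega$, so $\langle\calt\rangle$ contains $\calo_A$, $M$ and $\calo_A(2H)$; hence it contains $T_0$ and equals $D^b_c(A)$.

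It then remains to check the vanishing $\Ext^j_A(\calt,\omega_A^{-r}\otimes_A\calt)=0$ for all $j\geq 1$ and all $r\geq 0$, which by Definition~\ref{dnheredbundle} and Proposition~\ref{pnheredbundle} gives that $\calt$ is a $2$-hereditary tilting bundle (the case $r=0$ being the partial tilting condition). Since $\omega_A^{-1}\otimes_A(-)$ is the twist by $-(K_X+\Delta)$, which in the divisor class group of $(\PP^2,\Delta)$ equals $-H+\tfrac12 H_1+\tfrac12 H_2+\tfrac12 H_3+\tfrac12 H_4$, Proposition~\ref{plinebundles} identifies every Ext between line bundles $\calo_A(D)$ and $\omega_A^{-r}\otimes_A\calo_A(D')$ with a cohomology group $H^{\bullet}(\calo_{\PP^2}(t))$, which vanishes in positive degree precisely when $t\geq -2$. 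I would split $\Ext^j_A(\calt,\omega_A^{-r}\otimes_A\calt)$ into the nine blocks coming from $\calt=\Xi\oplus M\oplus\Omega$, and in each block containing $\Xi$ or $\Omega$ resolve the offending summand by \eqref{exi} resp. \eqref{eomega} (and their $\omega_A^{-r}$-twists), running the associated long exact sequences until everything is expressed through the cohomology groups $H^{\bullet}(\calo_{\PP^2}(t))$ above together with the connecting maps. Since $H^1(\calo_{\PP^2}(t))=0$ for all $t$, the only candidate obstructions are $H^2$-terms, i.e.\ $\calo_{\PP^2}(t)$ with $t\leq -3$.

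The crucial point — and the reason the mutation is made at all — is that such an $H^2$-term really does occur, in the block $\Ext^{\bullet}_A(\Omega,\omega_A^{-r}\otimes_A\Xi)$: resolving $\Omega$ via \eqref{eomega} brings in the genuinely nonzero group $\Ext^2_A(\calo_A(2H),\omega_A^{-r}\otimes_A\calo_A)$, equal for $r=1$ to $H^2(\calo_{\PP^2}(-3))\cong k$, which is exactly what prevents $T_0$ itself from being $2$-hereditary. In the long exact sequence, however, this class maps into
$$\Ext^2_A\bigl(\textstyle\bigoplus_i\calo_A(H+\tfrac12 H_i),\,\omega_A^{-r}\otimes_A\calo_A\bigr)=\textstyle\bigoplus_i H^2(\calo_{\PP^2}(-2))=0 ,$$
so it is absorbed and, after one further use of \eqref{exi}, contributes nothing to $\Ext^{\bullet}_A(\Omega,\omega_A^{-r}\otimes_A\Xi)$. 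Carrying out this bookkeeping for all nine blocks — verifying that every surviving line bundle $\calo_{\PP^2}(t)$ has $t\geq -2$, and that at each place where a bad class could appear it maps into a vanishing group (the inputs being general position of $H_1,\dots,H_4$ and the above description of $-(K_X+\Delta)$) — is the bulk of the argument, and I expect this finite case analysis to be the only real obstacle; each individual step is a routine cohomology computation on $\PP^2$ via Proposition~\ref{plinebundles}.

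Finally, since $(\PP^2,\Delta)$ is Fano, Proposition~\ref{ptame} yields as a bonus that $\End_A\calt$ is tame.
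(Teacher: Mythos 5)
Your overall strategy coincides with the paper's (generation via the tilting bundle $\bigoplus_{D\in[0,2H]}\calo_A(D)$, then Ext-vanishing by running the long exact sequences attached to \eqref{eomega} and \eqref{exi}), but there is a genuine gap in the claim that everything then reduces to ``routine cohomology computations on $\PP^2$'' with the only possible obstructions being $H^2$-terms. The block that does not reduce to line-bundle cohomology is $\Ext^1_A(\Xi,\Omega(-rK_A))$. Resolving $\Xi$ by \eqref{exi}, the relevant terms $\Ext^1_A(\calo_A(\tfrac12 H_i),\Omega(-rK_A))$ vanish, so the long exact sequence leaves you with
$$\Ext^1_A(\Xi,\Omega(-rK_A))\;=\;\coker\Bigl(\Hom_A\bigl(\textstyle\bigoplus_i\calo_A(\tfrac12 H_i),\Omega(-rK_A)\bigr)\lm\Hom_A(\calo_A,\Omega(-rK_A))\Bigr),$$
i.e.\ you must prove that every morphism $\calo_A\lm\Omega(-rK_A)$ factors through $\iota$ (resolving $\Omega$ instead only trades this for the analogous lifting problem through $\pi$). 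This is a surjectivity statement at the level of global Hom's, not a vanishing of $H^1$ or $H^2$ of line bundles, and it is exactly where the paper has to work: in the Cox ring $R_{\X}=k[x_1,\dots,x_4]/(x_1^2+x_2^2+x_3^2+x_4^2)$ a map $\calo_A\lm\Omega(-rK_A)$ is a $4$-vector $\mathbf f$ with $\mathbf f\mathbf x^T=0$, and one must show every such syzygy is an $R_X$-combination of ``liftable'' ones (Lemma~\ref{lextxiomega}). Because $R_{\X}$ is not a polynomial ring, the Koszul complex on $x_1,\dots,x_4$ is not exact and the syzygy module has extra generators such as $(0\ x_2x_3^2\ -x_3x_2^2\ 0)$, so the verification is a genuine case-by-case computation (split according to the parity of $r$), not something Proposition~\ref{plinebundles} alone can deliver. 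Your proposal never identifies this step, and without it the proof is incomplete.

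A secondary, smaller point: in the block $\Ext^{\bu}_A(\Omega,\omega_A^{-r}\otimes_A\Xi)$ your ``absorption'' argument is stated backwards. The map $\Ext^2_A(\calo_A(2H),\calo_A(-K_A))\lm\Ext^2_A(\bigoplus_i\calo_A(H+\tfrac12H_i),\calo_A(-K_A))=0$ having zero target means the bad class \emph{survives}, giving $\Ext^1_A(\Omega,\calo_A(-K_A))\simeq k$ (Lemma~\ref{lextODomega}(ii)); it is harmless only because $\calo_A$ is no longer a summand of $\calt$, and $\Ext^1_A(\Omega,\Xi(-rK_A))=0$ then follows because both neighbouring terms $\Ext^1_A(\Omega,\bigoplus_i\calo_A(\tfrac12H_i-rK_A))$ and $\Ext^2_A(\Omega,\calo_A(-rK_A))$ vanish. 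That block is indeed routine once phrased this way; the essential missing ingredient remains the factorisation result for $\Ext^1_A(\Xi,\Omega(-rK_A))$ described above.
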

\begin{proof}
First note that the triangulated category generated by $\calt$ contains the tilting bundle $\bigoplus_{D \in [0,2H]} \calo_A(D)$ and hence $D^b_c(A)$. It remains only to check the vanishing of appropriate Ext groups. This will be achieved in a series of lemmas.
\begin{lemma}  \label{lhompi}  \mlabel{lhompi}
Let $D \in \mathbb{L}$. Then $\phi = \Hom_A(\calo_A(D), \pi)$ is surjective unless $D = 2H$, in which case $\coker \phi = k$. 
\end{lemma}
\begin{proof}
Consider the inclusion map $\pi_i \colon \calo_A(H + \tfrac{1}{2}H_i) \hookrightarrow  \calo(2H)$. If $\Hom_A(\calo_A(D), \calo_A(2H))$ is the line bundle $\call$ on $X$, then $\Hom_A(\calo_A(D), \pi_i)$ is either the inclusion $\call(-H_i) \hookrightarrow \call$, or the identity map $\call \lm \call$. Furthermore, we know from the Koszul resolution of the polynomial ring that $H^0(\oplus \call(-H_i)) \lm H^0(\call)$ is surjective unless $\call \simeq \ox$. In the latter case, we must have either a) $D = 2H$  in which case we are done, or b) $D \leq 2H-\tfrac{1}{2}H_i= H + \tfrac{1}{2}H_i$ for some $i$. In this case, we see $\Hom_A(\calo_A(D), \pi_i)$ is already an isomorphism.  
\end{proof}
\vs

Note that $-K_A = -H + \tfrac{1}{2}\sum H_i$ has degree 1 and that $-2K_A \sim 2H$. 
The next lemma follow froms Proposition~\ref{plinebundles} and Serre duality.
\begin{lemma}  \label{lextOD}  \mlabel{lextOD}
Let $D,D' \in [0,2H]$ and $r\geq 0$. Then $\Ext^1_A(\calo_A(D), \calo_A(D'-rK_A)) = 0$ and 
$$
\Ext^2_A(\calo_A(D), \calo_A(D'-rK_A)) = 
\begin{cases} 
k & \text{ if } r = 1, D = 2H \text{ and } D' = 0 \\
0 & \text{ else} 
\end{cases}
$$
\end{lemma}
\begin{lemma}  \label{lextODomega}  \mlabel{lextODomega}
Let $D \in [0,2H]$ and $r \geq 0$. Then 
\begin{enumerate}
 \item $\Ext^2(\Omega,\calo_A(D - rK_A)) = 0 = \Ext^2(\calo_A(D), \Omega(- rK_A))$.
 \item $\Ext^1(\Omega,\calo_A(D - rK_A)) = 
\begin{cases}
 k & \text{ if } r=1 \text{ and } D=0 \\
 0 & \text{ else}
\end{cases}
$.
\item $\Ext^1(\calo_A(D), \Omega(- rK_A)) = 
\begin{cases}
 k & \text{ if } r=0 \text{ and } D=2H \\
 0 & \text{ else}
\end{cases}
$.
\item $\Ext^2(\Omega,\Omega(- rK_A)) = \Ext^1(\Omega, \Omega(- rK_A)) =0$.
\end{enumerate}
 
\end{lemma}
\begin{proof}
For the most part, these follow from the long exact sequence associated to twists of $E_{\Omega}$ (see \eqref{eomega}) and Lemma~\ref{lextOD}. We prove only part~iii) which requires further attention. Now
$$ \Ext^1(\calo_A(D), \Omega(- rK_A)) \simeq \Ext^1(\calo_A(D+rK_A), \Omega) \simeq \coker \Hom_A(\calo_A(D+rK_A),\pi).$$
By Lemma~\ref{lhompi}, we know this is zero unless $D + rK_A = 2H$. In this case, $D \in [0,2H] \cap (2H - \mathbb{N} K_A)$. Now $K_A$ has degree -1, so this can only occur when $r=0$. 
\end{proof}
\vs

We omit the proof of the following ``dual'' result involving $\Xi$.
\begin{lemma}  \label{lextODxi}  \mlabel{lextODxi}
Let $D \in [0,2H]$ and $r \geq 0$. Then 
\begin{enumerate}
 \item $\Ext^2(\Xi,\calo_A(D - rK_A)) = 0 = \Ext^2(\calo_A(D), \Xi(- rK_A))$.
 \item $\Ext^1(\Xi,\calo_A(D - rK_A)) = 
\begin{cases}
 k & \text{ if } r=0 \text{ and } D=0 \\
 0 & \text{ else}
\end{cases}
$.
\item $\Ext^1(\calo_A(D), \Xi(- rK_A)) = 
\begin{cases}
 k & \text{ if } r=1 \text{ and } D=2H \\
 0 & \text{ else}
\end{cases}
$.
\item $\Ext^2(\Xi,\Xi(- rK_A)) = \Ext^1(\Xi, \Xi(- rK_A)) =0$.
\end{enumerate} 
\end{lemma}
The long exact sequences associated to $E_{\Omega}$ and $E_{\Xi}$ (see \eqref{eomega},\eqref{exi}) and Lemmas~\ref{lextODomega}, \ref{lextODxi} show that 
$$\Ext^2(\Omega,\Xi(- rK_A))  = \Ext^1(\Omega,\Xi(- rK_A))= \Ext^2(\Xi, \Omega(- rK_A)) =0$$
whilst
$$ \Ext^1(\Xi, \Omega(- rK_A)) = \coker \Hom_A(\iota, \Omega(-rK_A)).$$
The theorem will thus be proved once we show 
\begin{lemma} \label{lextxiomega}  \mlabel{lextxiomega}
 The cokernel of $\Hom_A(\iota, \Omega(-rK_A))$ is 0. 
\end{lemma}
\begin{proof}
We work in the Cox ring $R= R_{\mathbb{X}}$ as defined in Section~\ref{scox}. This is generated by $x_i \in \Hom_A(\calo_A,\calo_A(\tfrac{1}{2}H_i)), \ i = 1,2,3,4$. Now we may naturally identify $x_i^2$ with a global section of $\ox(D_i)$ and, changing coordinates appropriately, we may assume that we have the relation $x_1^2 + x_2^2 + x_3^2 + x_4^2 = 0$. The Cox ring $R_X$ of $X$ is the subalgebra of $R$ corresponding to the {\em $H$-Veronese}, that is, the sum of all components of degree $nH, n \in  \mathbb{N}$. 

Recall that by default, $A$-modules are left modules so homomorphisms between line bundles are given by right multiplication by elements of $R$. We may thus view $\iota$ and $\pi$ as right multiplication by $\mathbf{x} = (x_1 \ x_2 \ x_3 \ x_4)$ and $\mathbf{x}^T$ respectively. 
Consider now a morphism $\calo_A \lm \bigoplus_i \calo_A(H + \tfrac{1}{2}H_i -rK_A)$ which we view as a 4-vector $\mathbf{f} = (f_1 \ f_2 \ f_3 \ f_4)$ with entries in $R$. It factors through a morphism $f \colon \calo_A \lm \Omega(-rK_A)$ if and only if $\mathbf{f}\mathbf{x}^T = 0$. 

In this case, we can factor $f$ through $\iota$ if there exist $\mathbf{f}_1,\mathbf{f}_3,\mathbf{f}_3,\mathbf{f}_4 \in R^4$ with $\mathbf{f}_i \mathbf{x}^T = 0$ for all $i$ and 
$$ \mathbf{f} =  x_1 \mathbf{f}_1 +  x_2 \mathbf{f}_2 + x_3 \mathbf{f}_3 + x_4 \mathbf{f}_4.$$
Indeed, $f = \phi \circ \iota$ where $\phi$ is induced by the $4 \times 4$-matrix 
$$
\begin{pmatrix}
\mathbf{f}_1 \\ \mathbf{f}_2 \\ \mathbf{f}_3  \\ \mathbf{f}_4
\end{pmatrix}.
$$
In this case, we shall say that the 4-vector $\mathbf{f}$ is {\em liftable}. 

Consider first the case where $r$ is even so $-rK_A \sim rH$. Then $\mathbf{f} = (f'_1  x_1  \ f'_2  x_2  \ f'_3  x_3  \ f'_4  x_4)$ for some $f'_i \in \ox(r+1) \subset R_X$. Computing in the polynomial ring $R_X$, we find that $\mathbf{f} \mathbf{x}^T= 0$ if and only if 
\begin{equation*}
\mathbf{f}' := (f'_1  \ f'_2  \ f'_3  \ f'_4) \in 
R_X(1 \ 1 \ 1 \ 1) \oplus R_X (0 \ x_3^2 \ -\!x_2^2 \ 0 )  \oplus R_X (0 \ 0 \ x_4^2 \ -\!x_3^2)  \oplus R_X (0 \ x_4^2 \ 0 \ -\!x_2^2) 
\end{equation*}
Hence
$$ \mathbf{f} \in R_X(x_1 \ x_2 \ x_3 \ x_4) \oplus R_X (0 \ x_2x_3^2 \ -\!x_3x_2^2 \ 0 )  \oplus R_X (0 \ 0 \ x_3x_4^2 \ -\!x_4x_3^2)  \oplus R_X (0 \ x_2x_4^2 \ 0 \ -\!x_4x_2^2) 
$$
Now $(0 \ x_2x_3^2 \ -\!x_3x_2^2 \ 0 ), (0 \ 0 \ x_3x_4^2 \ -\!x_4x_3^2), (0 \ x_2x_4^2 \ 0 \ -\!x_4x_2^2)$ are all liftable, so any $R_X$-linear combination of them is too. If $R_{X,>0}$ denotes the augmentation ideal consisting of positive degree elements, then all vectors in $R_{X,>0}(x_1 \ x_2 \ x_3 \ x_4)$ are also liftable, so as $r\geq 0$, the theorem is proved in the $r$ even case.

Consider now the case where $r$ is odd so $-rK_A \simeq (r-2)H + \tfrac{1}{2}\sum H_i$. Then 
$$\mathbf{f} = (f'_1  x_2x_3x_4  \ f'_2  x_1x_3x_4  \ f'_3  x_1x_2x_4  \ f'_4  x_1x_2x_3)$$ 
for some $f'_i \in \ox(r) \subset R_X$. This time $\mathbf{f} \mathbf{x}^T= 0$ amounts to $\sum f'_i = 0$. Hence 
$$ \mathbf{f}' := (f'_1  \ f'_2  \ f'_3  \ f'_4) \in 
R_X(1 \ -1 \ 0 \ 0) \oplus R_X (0 \ 1 \ -1 \ 0 )  \oplus R_X (0 \ 0 \ 1 \ -1)
$$
and 
$$\mathbf{f} \in R_X(x_2x_3x_4 \ -\!x_1x_3x_4 \ 0 \ 0) \oplus R_X (0 \ x_1x_3x_4 \ -\!x_1x_2x_4 \ 0 )  \oplus R_X (0 \ 0 \ x_1x_2x_4 \ -\!x_1x_2x_3)
$$
Since each of the $R_X$-generators on the right is liftable, we are done in this case too.
\end{proof}
\vs

The proof of the theorem is now complete. 
\end{proof}

\end{document}